\documentclass[11 pt,fullpage]{article}
\usepackage{amsfonts,amssymb}
\usepackage{amsmath,amsthm}
\usepackage{color} 
\usepackage{hyperref}
\usepackage{mathtools}
\usepackage[right = 2.5cm, left=2.5cm, top = 2.5cm, bottom =2.5cm]{geometry}
\pagestyle{plain}

\topmargin  -0.2in \textheight  9.0in \overfullrule = 0pt

\theoremstyle{plain}
\newtheorem{theorem}{Theorem}

\newtheorem{proposition}{Proposition}[section]
\newtheorem{lemma}[proposition]{Lemma}

\theoremstyle{definition}
\newtheorem{remark}{Remark}

\setcounter{footnote}{1}
\setcounter{tocdepth}{2}

\allowdisplaybreaks

\numberwithin{equation}{section}

%




\newcommand\Torus{{\mathbb T}}



\newcommand{\cC}{\mathcal C}
\newcommand{\cD}{\mathcal D}
\newcommand{\cE}{\mathcal E}

\newcommand{\cH}{\mathcal H}
\newcommand{\cI}{\mathcal I}
\newcommand{\cJ}{\mathcal J}
\newcommand{\cK}{\mathcal K}
\newcommand{\cL}{\mathcal L}
\newcommand{\cM}{\mathcal M}
\newcommand{\cN}{\mathcal N}




\def\eps{{\varepsilon}}



\renewcommand\eps{\epsilon }

\newcommand{\Real}{\mathbb R}
\newcommand{\Complex}{\mathbb C}
\newcommand{\Integer}{\mathbb Z}
\newcommand{\Integers}{\mathbb Z}
\newcommand{\norm}[1]{\left\lVert#1\right\rVert}
\newcommand{\abs}[1]{\left\vert#1\right\vert}
\newcommand{\set}[1]{\left\{#1\right\}}

\newcommand{\grad}{\nabla}

\newcommand{\Naturals}{\mathbb N}
\newcommand{\jap}[1]{\langle #1 \rangle} 
\newcommand{\brak}[1]{\langle #1 \rangle} 

\newcommand{\ddv}{\dd v}

\newcommand{\dss}{\displaystyle}


 \newcommand{\dd}{{\, \mathrm d}}

\makeatletter
\def\blfootnote{\xdef\@thefnmark{}\@footnotetext}
\makeatother


\begin{document}
 
\title{Suppression of plasma echoes and Landau damping in Sobolev spaces by weak collisions in a Vlasov-Fokker-Planck equation} 
\author{Jacob Bedrossian\footnote{\textit{jacob@cscamm.umd.edu}, University of Maryland, College Park. The author was partially supported by NSF CAREER grant DMS-1552826, NSF DMS-1413177, and a Sloan research fellowship. Additionally, the research was supported in part by NSF RNMS \#1107444 (Ki-Net).}}

\date{\today}
\maketitle

\begin{abstract} 
In this paper, we study Landau damping in the weakly collisional limit of a Vlasov-Fokker-Planck equation with nonlinear collisions in the phase-space $(x,v) \in \Torus_x^n \times \Real^n_v$. 
The goal is four-fold: (A) to understand how collisions suppress plasma echoes and enable Landau damping in agreement with linearized theory in Sobolev spaces,
(B) to understand how phase mixing accelerates collisional relaxation, (C) to understand better how the plasma returns to global equilibrium during Landau damping, and (D) to rule out that collision-driven nonlinear instabilities dominate. 
 We give an estimate for the scaling law between Knudsen number and the maximal size of the perturbation necessary for linear theory to be accurate in Sobolev regularity. 
We conjecture this scaling to be sharp (up to logarithmic corrections) due to  potential nonlinear echoes in the collisionless model.\blfootnote{2010 MSC: 35B35, 35B34, 35B40, 35Q83, 35Q84} 
\end{abstract}

\setcounter{tocdepth}{1}
{\small\tableofcontents}

\section{Introduction}
In this paper we consider the weakly collisional limit of a single-species Vlasov-Poisson with nonlinear Fokker-Planck collisions near a global Maxwellian in the phase-space $(x,v) \in \Torus_x^n \times \Real_v^n$ (with $\Torus_x$ normalized to length $2\pi$).
Specifically, if $F(t,x,v)$ is the distribution function, then we consider the following single-species model with a neutralizing background: 
\begin{equation}\label{def:VPE}
  \left\{
\begin{array}{l} \dss 
\partial_t F + v\cdot \grad_x F + E(t,x)\cdot \grad_v F  = \nu \bar{\rho} (\bar{T}\Delta_v F + \grad_v \cdot ((v-u)F) ) \\
E(t,x) = -\grad_x (-\Delta)^{-1}_x(\bar{\rho}-1) \\ 
\bar{\rho}(t,x) = \int F(t,x,v) \dd v , \\
\bar{\rho} u(t,x)  = \int F(t,x,v) v \dd v \\ 
\frac{1}{2}\bar{\rho} \abs{u}^2 + \frac{1}{2} n \bar{\rho} \bar{T}  = \frac{1}{2}\int F \abs{v}^2 \dd v \\  
F(t=0,x,v) = F_{\mbox{{\scriptsize in}}}(x,v). 
\end{array}
\right.
\end{equation}
Here, $\nu > 0$ is the inverse Knudsen number and $\bar{\rho}, u,$ and $\bar{T}$ respectively denote the associated hydrodynamic fields of density, mean velocity, and temperature.
We study the long-time ($t \rightarrow \infty$), weak collisionality limit ($\nu \rightarrow 0$) of \eqref{def:VPE} near global equilibrium, e.g. $F(t,x,v) = \mu(v) + h(t,x,v)$ for $h$ initially small in Sobolev spaces.
 
In many plasma physics settings, the collisions are very weak, that is $\nu \ll 1$, and there are many situations when physicists neglect them entirely (see e.g. the classical books \cite{GoldstonRutherford95,BoydSanderson}). 
However, exactly how weak collisions and collisionless effects such as Landau damping interact, especially when nonlinearity is accounted for, has been the subject of study and debate in the physics community for almost 60 years (see e.g. \cite{LenardBernstein1958,SuOberman1968,ONeil1968,Johnston1971,NgBhattacharjee1999,NgBhattacharjee2006,ShortSimon2002,Callen2014} for discussions). 
The purpose of our work is to provide a mathematical study of this interaction and to understand the role it plays in suppressing nonlinear instabilities.  
The only other mathematical work devoted to weakly collisional limits known to the author is the recent work of Tristani \cite{Tristani2016}, which studies a linear analogue of \eqref{def:VPE} but does not quantify the interaction between the collisions and the collisionless effects. 

In the case $\nu = 0$, \eqref{def:VPE} reduces to the Vlasov equations. 
Of particular note is the behavior known as \emph{Landau damping} -- the rapid decay of the electric field despite the lack of dissipative mechanisms \cite{Landau46,Ryutov99,BoydSanderson,MouhotVillani11}.
Landau damping was discovered first for the linearized equations by Landau in 1946 \cite{Landau46} and was later observed in experiments \cite{MalmbergWharton64,MalmbergWharton68}, and is now a fundamentally important property of collisionless plasmas (see e.g. \cite{Ryutov99,BoydSanderson,Stix,MouhotVillani11}). 
In \cite{MouhotVillani11}, Landau's linearized analysis was confirmed, mathematically rigorously, to be accurate uniformly in time for the nonlinear Vlasov equations for initial data which was analytic or Gevrey class of sufficiently low index (see also earlier work of \cite{CagliotiMaffei98,HwangVelazquez09} and the more recent \cite{BMM13,Young14}). 
Landau damping is connected to the mixing in phase-space due to the transport operator, and can be considered a variant of velocity averaging (see e.g. \cite{GolseEtAl1985,PerthameSouganidis1998,GolseEtAl1988,JabinVega2004}). 

The famous experiments \cite{MalmbergWharton68}, showed that weakly collisional plasmas near equilibrium can display nonlinear oscillations known as plasma echoes.
These oscillations are caused by nonlinear effects exciting modes which are un-mixing in phase space, causing a transient growth of the electric field, which can then, in turn, excite further oscillations and create  a cascade. 
This transient growth is related to the Orr mechanism in fluid mechanics \cite{Orr07}; see \cite{Bedrossian16} and the references therein for more discussion on the relevance of the Orr mechanism to Landau damping. See also \cite{YuDriscollONeil,YuDriscoll02,Vanneste02,VMW98,BM13,BVW16} for discussions regarding the relevance of nonlinear echoes to fluid mechanics.
 Mouhot and Villani \cite{MouhotVillani11} isolated the plasma echo `resonances' as one of the primary potential barriers to proving that the linearized theory extends to the nonlinear Vlasov equations on $\Torus^n \times \Real^n$, and the authors could only control the plasma echoes in a sufficiently regular Gevrey class (see also \cite{BMM13}). 
It was later proved by the author in \cite{Bedrossian16} that on $\Torus \times \Real$, there are indeed settings in which nonlinear plasma echoes can dominate the dynamics if one only assumes the data to be small in Sobolev spaces, justifying the need to work in such high regularity spaces in general. 

The intuition that a small amount of collisions should play a role in suppressing nonlinear effects is classical in plasma physics \cite{SuOberman1968,ONeil1968,Stix}.
We are specifically interested in estimating the \emph{minimal} collisions necessary to suppress the plasma echoes on $\Torus^n \times \Real^n$.  
This question is analogous to a well-known set of problems in hydrodynamic stability: that of estimating the ``transition threshold'' of an equilibrium, such as a shear flow (see e.g. \cite{TTRD93,BaggettEtAl,ReddySchmidEtAl98,BGM15I,BGM15II,BGM15III,BVW16} and the references therein).
The goal there is to estimate the basin of nonlinear stability of a shear flow (in a certain sense) as a scaling law in high Reynolds numbers.  
In \cite{BGM15I,BGM15II,BGM15III}, it was shown that the enhancement of the viscous dissipation due to the mixing driven by the shear is crucial for understanding this problem. 
This mixing-enhanced dissipation effect was derived first by Kelvin \cite{Kelvin87}, and is sometimes called the `shear diffuse mechanism' or `relaxation enhancement' in the fluid mechanics community. It is caused by the interaction of the transport term and dissipation: conservative transport transfers information to high frequencies where the dissipation is increasingly dominant. 
The effect has been studied by many authors in the mathematics community  \cite{CKRZ08,Zlatos2010,BeckWayne11,VukadinovicEtAl2015,BCZ15} and in the fluid mechanics community \cite{RhinesYoung83,DubrulleNazarenko94,LatiniBernoff01}. 
That a similar effect is predicted to happen in plasmas due to the second-order smoothing of Coulomb collisions is classical in the physics community \cite{LenardBernstein1958,SuOberman1968,ONeil1968}. 
Indeed, the time-scale for collisional relaxation of $x$-dependent modes is $O(\abs{k}^{-2/3}\nu^{-1/3})$ (where $k$ is spatial frequency), at least for the linearized problem (see Lemma \ref{lem:propS} below). 
This also suggests a hypoelliptic smoothing effect in $x$, indeed, such hypoellipticity has been observed in the context of collisional kinetic theory for a variety of collision operators \cite{AlexandreEtAl2010, Chen2009320, ChenDesLing2009}. 
Both the enhanced decay and the hypoellipticity play important roles in our work. 

The PDE \eqref{def:VPE} conserves mass, momentum, and energy: 
\begin{align}
\begin{array}{l} 
\dss \frac{d}{dt}\int \bar{\rho}(t,x) \dd x = 0, \\
\dss \frac{d}{dt} \int \bar{\rho} u(t,x) \dd x = 0, \\
\dss \frac{d}{dt}\left(\frac{1}{2}\int \int F(t,x,v) \abs{v}^2 \dd v \dd x + \frac{1}{2}\norm{E(t)}_{L^2}^2\right) := \frac{d}{dt}\cE = 0. 
\end{array}
 \end{align}
One also has an analogue of Boltzmann's $H$-theorem (which plays a role in our work in its linearized guise as a spectral gap; see \S\ref{sec:Zeromode}).  
By conservation of energy, when the electric field Landau damps, the kinetic energy must increase. 
Hence, we are interested in understanding more precisely how the energy in the electric field is converted into heat and how the plasma returns to global thermodynamic equilibrium.
This partially motivates the considerable additional effort in the proof below to use the nonlinear collision operator in \eqref{def:VPE} -- as opposed to the linear $\nu(\Delta_v F + \grad_v \cdot( Fv))$ Fokker-Planck commonly used in studies on weak collisions \cite{LenardBernstein1958,SuOberman1968,ONeil1968,Johnston1971,NgBhattacharjee1999,NgBhattacharjee2006,ShortSimon2002} and \cite{Tristani2016}. 
The other motivation is to rule out that collision-driven nonlinear instabilities dominate (as might arise for example, if the plasma were close to a `hydrodynamic' regime for too long; see e.g. \cite{DesvillettesVillani2005} for a related discussion in the context of the Boltzmann equation). 

For all $\nu > 0$, the eventual state as $t \rightarrow \infty$ is a global Maxwellian with final temperature $T_\infty$:  
\begin{align}
\mu(v) = \frac{1}{(2\pi T_\infty)^{n/2}}e^{-\abs{v}^2/2T_\infty}, \label{def:mu}
\end{align}
where $T_\infty$ is determined by the conservation laws via the energy per unit volume: 
\begin{align}
n T_\infty & = \frac{2}{(2\pi)^n}\cE. \label{def:Tinft}
\end{align}
We have not developed the techniques to be uniform in $T_\infty$ and hence we will need $T_\infty = O(1)$; it is technically simplest to assume $T_\infty = 1$ for convenience, though any fixed $T_\infty > 0$ could be treated.
Denote the following projections for an arbitrary function $g$, 
\begin{align*}
g_0(t,v) & := \frac{1}{(2\pi)^n} \int_{\Real^n} g(t,x,v) \dd x \\
g_{\neq}(t,x,v) & := g(t,x,v) - g_0(t,v),
\end{align*}
and the weighted Sobolev norms (see \S\ref{sec:Notation} for conventions):
\begin{align*}
\norm{f}_{H^j_\ell} = \norm{\brak{v}^{\ell} \brak{\grad_{x,v}}^j f}_{L^2}. 
\end{align*}
The theorem we prove is the following 
\begin{theorem} \label{thm:main}
Suppose $F_{in}$ satisfies $\int_{\Torus^n \times \Real^n} F_{in} \dd x \dd v = (2\pi)^n$, $\int_{\Real^n} vF_{in} \dd x = 0$, and $\cE = \frac{n}{2}(2\pi)^n$. 
Write $F_{in} = \mu + h_{in}$ (with $\mu$ defined as \eqref{def:mu} with $T_\infty = 1$) and consider \eqref{def:VPE} 
with initial data $F_{in}$. 
Then, $\exists\, \sigma_0 = \sigma_0(n)$ such that for all $\sigma > \sigma_0$ chosen so that $\sigma + 1/2 \in \Naturals$ and integers $m',m$ with $m > n/2+3$ and $m' > m +\sigma+1/2$, there exists constants $\nu_0 = \nu_0(\sigma,n,m,m')$ and $c_0 = c_0(\sigma,n,m,m') > 0$ such that if $\nu \in (0,\nu_0)$ and
\begin{align}
\norm{h_{in}}_{H^{\sigma+1/2}_{m'}} = \eps < c_0 \nu^{1/3}, \label{ineq:threshold}
\end{align}
then for some universal $\delta > 0$, and $\beta = \sigma-6$, there holds the following with implicit constants taken independent of $\nu, t$, and $\eps$: 
\begin{subequations} \label{ineq:finalests}
\begin{align}
\norm{F_0(t) - \mu}_{H^{\beta}_{m}}& \lesssim \eps e^{-\nu t}, \\ 
\norm{F_{\neq}(t)}_{L^2_{m}} & \lesssim \eps e^{-\delta \nu^{1/3}t}, \\ 
\abs{\hat{\bar{\rho}}(t,k)} + \abs{\hat{u}(t,k)} + \abs{\widehat{\bar{T}}(t,k)}  & \lesssim \eps e^{-\delta \nu^{1/3}t}\frac{1}{\jap{k,k t}^{\beta}} \quad\quad \forall\, k \neq 0, \\ 
\abs{u_0(t)} + \abs{\bar{T}_0(t) - 1} & \lesssim \eps e^{-2\delta \nu^{1/3}t}\frac{1}{\jap{t}^{2\beta}}. 
\end{align} 
\end{subequations}
\end{theorem}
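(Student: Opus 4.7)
The plan is to run a bootstrap argument in carefully weighted Sobolev norms. Writing $F=\mu+h$ with $h=h_0+h_{\neq}$, the equation for $h$ naturally decomposes into a nonlinear Fokker--Planck equation for $h_0$, forced by $x$-averages of the reaction $E_{\neq}\cdot\grad_v h_{\neq}$ and by quadratic hydrodynamic terms, and a linearized Vlasov--Poisson--Fokker--Planck equation for $h_{\neq}$ plus the reaction nonlinearity, to which the enhanced-dissipation semigroup estimates of Lemma~\ref{lem:propS} can be applied. The goal is to close a single bootstrap controlling $\|h_0\|_{H^\beta_m}$ and a Fourier-multiplier norm $\|A h_{\neq}\|_{L^2_m}$ by $C\eps^2$, from which the four estimates in \eqref{ineq:finalests} follow.

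For the zero mode I would exploit the hypocoercive structure of the nonlinear Fokker--Planck operator linearized about $\mu$. The $H$-theorem (\S\ref{sec:Zeromode}) provides a spectral gap of order $\nu$ on the orthogonal complement of the conserved quantities; combined with the normalizations $\int F_{in}\,dxdv=(2\pi)^n$, $\int vF_{in}\,dxdv=0$, and $\cE=\tfrac n2(2\pi)^n$ that pin the steady state exactly to $\mu$, this yields $\|h_0\|_{H^\beta_m}\lesssim \eps e^{-\nu t}+\text{(forcing)}$ after suitable weighted energy estimates. The rates $e^{-2\delta\nu^{1/3}t}$ claimed for $u_0$ and $\bar T_0-1$ reflect that these moments vanish on $\mu$ and their leading contribution is quadratic in $h_{\neq}$, each factor of which decays like $e^{-\delta\nu^{1/3}t}$.

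For the non-zero modes I would adopt the $A$-weighted energy method of \cite{MouhotVillani11,BMM13}, adjusted for Sobolev rather than Gevrey regularity. Introduce a time-dependent Fourier multiplier $A(t,k,\eta)$ on $\widehat h_{\neq}$ combining three ingredients: a Sobolev weight $\brak{k,\eta}^\sigma$, a phase-mixing echo weight that becomes small near the resonance set $\eta\approx kt$ (addressing the cascade identified in \cite{Bedrossian16}), and an enhanced-dissipation factor $\exp(\delta\nu^{1/3}t)$ reflecting Lemma~\ref{lem:propS}. A weighted energy estimate then gives schematically
\begin{align*}
\tfrac12\tfrac{d}{dt}\|Ah_{\neq}\|_{L^2_m}^2 + \nu\|A\grad_v h_{\neq}\|_{L^2_m}^2 + \mathrm{CK} \lesssim \cR + \cT,
\end{align*}
where $\mathrm{CK}$ is the positive Cauchy--Kovalevskaya-type term produced by $\partial_t A$ (giving a gain of order $\nu^{1/3}\|Ah_{\neq}\|_{L^2_m}^2$ plus smallness near the echo resonances), $\cR$ is the reaction term from $E_{\neq}\cdot\grad_v h_{\neq}$, and $\cT$ gathers the transport and nonlinear collision terms.

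The main obstacle will be controlling $\cR$ in the $A$-weighted norm. The analysis of \cite{Bedrossian16} shows that in Sobolev regularity a single plasma echo generation can amplify energy by a factor $\exp(C/\nu^{1/3})$, so the echo weight in $A$ must be tuned so that its commutator with the reaction generates a multiplicative factor of size $\eps\nu^{-1/3}$; absorbing this into $\mathrm{CK}$ forces precisely the threshold $\eps<c_0\nu^{1/3}$ in \eqref{ineq:threshold}. Concretely, I would split the convolution in $\cR$ via a paraproduct decomposition into resonant and non-resonant frequency regions, controlling the resonant part by the decreasing echo-weight factor and the non-resonant part by the regularity gap $m'>m+\sigma+1/2$ combined with the polynomial velocity moments ($m>n/2+3$ ensures Sobolev embedding to control the hydrodynamic fields). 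The remainder term $\cT$ is lower-order: the quadratic dependence of the nonlinear collision operator on $(\bar\rho,u,\bar T)$ is handled by standard product estimates in $H^\sigma_m$ together with the already-established zero-mode decay. Closing the bootstrap on $\|h_0\|_{H^\beta_m}^2+\|Ah_{\neq}\|_{L^2_m}^2\lesssim \eps^2$ and then unwinding the $A$-multiplier recovers the phase-mixing factor $\brak{k,kt}^{-\beta}$ in \eqref{ineq:finalests} for the non-zero hydrodynamic fields, while the enhanced-dissipation factor in $A$ supplies the $e^{-\delta\nu^{1/3}t}$ rate throughout.
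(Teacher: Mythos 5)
Your overall scheme---bootstrap, zero/nonzero mode split, a ghost-energy Fourier multiplier combining a Sobolev weight with an $e^{\delta\nu^{1/3}t}$ enhanced-decay factor, a Gallay--Wayne spectral gap for $h_0$, and a threshold $\eps \lesssim \nu^{1/3}$ traced to a $\nu^{-1/3}$ loss on the reaction---matches the paper in broad outline, but there are two gaps that would prevent the argument from closing in Sobolev regularity.

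First, the paper does not recover control of the density (nor of the higher hydrodynamic moments $M_1$, $M_2$) from the energy norm on $h_{\neq}$ alone. It derives, via Duhamel applied to the mixing-plus-dissipation flow, a Volterra equation \eqref{eq:rhoDef} for $\hat\rho$ and analogous (much more technical) expressions for $\widehat{M_1}$, $\widehat{M_2}$ in \S\ref{sec:highMoments}, and analyzes those separately. Two things come out of this that a direct energy estimate does not deliver. (i) An $L^2$-in-time bound $\norm{\abs{\grad_x}^{1/2}\rho}_{L^2_t \cM^{\sigma,\delta}} \lesssim \eps$ at essentially the \emph{same} Sobolev index $\sigma$ as the distribution function: the $1/2$-derivative gain is a velocity-averaging / hypoelliptic-smoothing gain from $S(t,k)$ (Lemma~\ref{lem:propS}), and the paper emphasizes in the remark following \eqref{ineq:bootf} that the larger regularity gap between $\rho$ and $h$ used in the collisionless works is no longer affordable because of the nonlinear collision operator. (ii) Linearized Landau damping of the density (Lemma~\ref{lem:LinLandau}) requires the Laplace-transform analysis of the Volterra kernel $K^0$ (Lemmas~\ref{lem:K0stab}--\ref{lem:K0bd}), which the CK/ghost-energy term in the distribution-function estimate does not produce. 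Without (i), your reaction term $\cR$ can only be controlled via $L^\infty_t$ Sobolev-trace bounds on $\rho$ with a regularity gap, and that gap is precisely what is unaffordable here.

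Second, writing off $\cT$ as ``lower order ... handled by standard product estimates in $H^\sigma_m$'' understates the difficulty. Because the coefficients $\bar\rho$, $u$, $\bar T$ in \eqref{def:cCh} are $x$-dependent, products like $\rho\,\grad_v\cdot(vh)$ lose a velocity moment, and the commutator with $v$ appearing in $J_{\cC_h;2}$ cannot simply be absorbed by the dissipation. The paper introduces the separate bootstrap quantity \eqref{boot:lowhimoment} in the $\cH$-norm (low $(x,v)$-regularity, high moment count $m'$, controlled $\brak{t}^{\theta\abs{\gamma}}$ growth) with its own improvement in \S\ref{sec:HiMoment} precisely to close this velocity-localization loop; this is the origin of the hypothesis $m' > m + \sigma + 1/2$, not a Sobolev-embedding convenience.

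Finally, a small conceptual correction: the multiplier $M$ used by the paper satisfies $c_m \le M \le 1$ uniformly (Lemma~\ref{lem:propM}); it is \emph{not} an echo weight that becomes small near $\eta \approx kt$ in the regularity-depleting sense of the Gevrey works. The echo cascade is suppressed by the enhanced decay of the semigroup $S(t,k)$, which enters through the Volterra kernel estimates of Lemma~\ref{lem:GenCEst}; $M$ only serves as a bounded ghost-energy/CK device inside the distribution-function estimates.
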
 
\begin{remark} 
See \cite{BMM16} for results on $\Real^3 \times \Real^3$, which show that for models with Debye shielding, dispersive effects, rather than collisions, can suppress nonlinear effects in Sobolev regularity. The dispersion effect may be too weak in $n \leq 2$ (note that the case $n = 1$ can be physically relevant for motions along externally imposed magnetic field lines).  
\end{remark} 
\begin{remark} 
Significantly more precise estimates are available; see \S\ref{sec:Energy} below. 
\end{remark} 
\begin{remark} 
In general, we write $E = -\grad_x W \ast (\bar{\rho}-1)$. For Coulomb electrostatic interactions we have $\widehat{W}(k) = \abs{k}^{-2}$. 
Our proof applies as is to any $W$ satisfying $0 \leq \widehat{W}(k) \lesssim \abs{k}^{-1}$, in particular, the regularizing effects of the collisions can be used to deal with more singular interactions than are treated in \cite{MouhotVillani11,BMM13} (note that more singular interactions can arise in certain limits; see e.g. \cite{Bardos2012,HanKwanRousset15} and the references therein). 
We further remark that if one supplements the hypotheses of Theorem \ref{thm:main} with a Penrose stability criterion such as that used in \cite{MouhotVillani11,BMM13,BMM16} one can extend our theorem to cover some non-repulsive kernels as well. 
\end{remark} 

Theorem \ref{thm:main} simultaneously describes several things: the suppression of echoes via collisions, the enhancement of collisional relaxation via mixing on time-scales like $\nu^{-1/3}$, the Landau damping of the hydrodynamic quantities, the associated heating of the plasma, and explicit control on the return to global thermodynamic equilibrium.  

The $O(\nu^{1/3})$ threshold \eqref{ineq:threshold} is very natural when one considers the relevant time-scales involved.
Using the methods of \cite{BMM13}, it is relatively straightforward to prove that the linearized Vlasov equations are an accurate approximation to the nonlinear Vlasov equations on time-scales like $O(\eps^{-1})$ if $\norm{h_{in}}_{H^\sigma_M} = \eps$ for any $\sigma$ sufficiently large. 
In \cite{Bedrossian16}, it was shown that in certain settings at least, this estimate is sharp up to logarithmic corrections for Sobolev data (that is, nonlinear effects can dominate after time-scales longer than $\eps^{-1}$). 
On the other hand, the time-scale at which collisions begin to dominate is predicted to be $\nu^{-1/3}$.
Hence, \eqref{ineq:threshold} is precisely that the collisional time-scale is shorter than the nonlinear time-scale. 
Accordingly, we conjecture that the $\nu^{1/3}$ threshold is sharp (up to logarithmic corrections) and we remark that the proof is simpler if one takes significantly smaller data in \eqref{ineq:threshold} (e.g. $\eps \lesssim \nu^{1/2}$ or $\eps \lesssim \nu$).
Note that using a Boltzmann collision operator will change the threshold and the collisional time-scale (the Boltzmann operator without cutoff behaves like a fractional dissipation operator \cite{AlexandreEtAl2010}) -- for plasmas, this means making incorrect predictions. 

\begin{remark}[Landau collision operator] \label{rmk:CollOps}
The most well-known collision operator for charged particle interactions is usually referred to as the Landau collision operator \cite{GoldstonRutherford95}; see the mathematical works \cite{Guo02,Guo12,ChenDesLing2009} and the references therein.
Like the Landau operator, the nonlinear Fokker-Planck is a second-order elliptic operator, it has the same conservation laws, and it satisfies an analogue of the $H$-theorem.  
We feel that the nonlinear Fokker-Planck is hence the logical place to start the study of weakly collisional limits in kinetic theory. 
However, it would be very interesting to study the Landau operator, e.g. taking the weakly collisional limit of \cite{Guo02,Guo12}.
Weakly collisional limits of non-cutoff Boltzmann equations would also be interesting (see \cite{AlexandreEtAl2010} for hypoelliptic smoothing effects). 
\end{remark}

\subsection{Notation and conventions} \label{sec:Notation}
We denote $\Naturals = \set{0,1,2,\dots}$ (including zero) and
$\Integer_\ast = \Integer \setminus \set{0}$.  For $\xi \in \Complex$
we use $\bar{\xi}$ to denote the complex conjugate.  
For a vectors $x = (x_1,x_2,...x_n)$ we use $\abs{x}$  to denote the $\ell^1$ norm,  
We denote $\jap{v} = \left( 1 + \abs{v}^2 \right)^{1/2}$  
and furthermore use the shorthand
\begin{align*}
\abs{k,\ell} = \abs{(k,\ell)}, \quad\quad \jap{k,\ell} = \jap{(k,\ell)}.  
\end{align*}
We will use similar notation for the velocity weighted $L^2$ inner product: 
\begin{align*}
\brak{f,g}_{L^2_q} & := \int_{\Torus^n \times \Real^n} \brak{v}^q \overline{f} g \, \dd x \dd v \\
\norm{f}_{L^2_q} & := \sqrt{\brak{f,f}_{L^2_q}} \\ 
\norm{f}_2 & := \norm{f}_{L^2_0}.  
\end{align*}
For $f(x,v)$ we define the Fourier transform $\hat{f}(k,\eta)$, where $(k,\eta) \in \Integer^n \times \Real^n$, and the inverse Fourier transform via
\begin{align*} 
\hat{f}(k,\eta)  = \frac{1}{(2\pi)^{n}}\int_{\Torus^n \times \Real^n} e^{-i x k - iy\eta} f(x,v) \dd x \dd v, \quad f(x,v) = \frac{1}{(2\pi)^{n}}\sum_{k \in \Integer^n} \int_{\Real^n} e^{i x k + iy\eta} \hat{f}(k,\eta) \dd \eta. 
\end{align*} 
For any locally bounded function $m(t,k,\eta)$ we denote the Fourier multiplier: 
\begin{align*}
mf = m(t,\partial_x,\partial_v)f = m(t,\grad)f := \left(m(t,k,\eta)\hat{f}(t,k,\eta)\right)^{\vee}.
\end{align*}
Weighted Sobolev norms are given as $\norm{f}^2_{H^\sigma_q} = \norm{\jap{\grad}^\sigma f}_{L^2_q}$.
To deal with moments, we will often abuse notation and write 
\begin{align*}
\norm{\jap{v}^q f}_{H^\sigma}^2 = \int_{\Torus^n \times \Real^n} \abs{\brak{\grad}^{\sigma}(\brak{v}^q f)}^2 \dd x \dd v \approx_{\sigma,q} \norm{f}_{H^\sigma_q}^2 . 
\end{align*}
We use the notation $f \lesssim g$ when there exists a constant
$C > 0$ such that $f \leq Cg$ (we analogously define $f \gtrsim g$).  Similarly, we use
the notation $f \approx g$ when there exists $C > 0$ such that
$C^{-1}g \leq f \leq Cg$.  We sometimes use the notation
$f \lesssim_{\alpha} g$ if we want to emphasize that the implicit
constant depends on some parameter $\alpha$.

\section{Outline of the proof}

\subsection{Reformulation of the problem}
In this section we reformulate the problem in ways which reveal some of the structure and make the problem more amenable
to Fourier multiplier methods such as those in \cite{BGM15III,BVW16}. 

First, we write the following
\begin{subequations} \label{def:M1M2T}
\begin{align}
\bar{T}(t,x) & = 1 + T(t,x), \\ 
M_1(t,x) & = \int h v \ddv, \\
M_2(t,x) & = \int h \abs{v}^2 \ddv.  
\end{align}
\end{subequations}
By conservation of mass, momentum, and energy, together with the definition of $\cE$, 
\begin{subequations} 
\begin{align}
\int \rho \dd x & = 0,  \\ 
\int M_1 \dd x & = 0, \label{eq:M1ctrl} \\ 
\int M_2 \dd x & = -\norm{E(t)}_{L^2}^2. \label{eq:M2ctrl}
\end{align}   
\end{subequations}
From \eqref{def:VPE}, 
\begin{subequations} \label{def:uT}
\begin{align}
(1 + \rho) u & = M_1 \\ 
(1+\rho) \abs{u}^2 + n (1+\rho)(1 + T) & = \int \mu \abs{v}^2 \ddv + M_2. 
\end{align}
\end{subequations} 
Note that by $T_\infty = 1$ and \eqref{def:Tinft}, this becomes 
\begin{align*}
n(1+\rho) T & = M_2 - (1+\rho) \abs{u}^2. 
\end{align*}
For future notational convenience, define
\begin{align}
M_T := (1+\rho)T. \label{def:MT}
\end{align}
Since the density perturbation $\rho$ will remain small, when necessary, we write $u$ and $T$ in terms of a geometric series: 
\begin{subequations} \label{eq:uTexpan}
\begin{align} 
u & = M_1 \sum_{j=0}^\infty (-1)^j \rho^j \\ 
nT & = \left(M_2 - (1+\rho) \abs{u}^2\right) \sum_{j=0}^\infty (-1)^j \rho^j. 
\end{align}
\end{subequations}
Next, we expand the collision operator. From \eqref{def:VPE}, we write
\begin{align}
\partial_t h + v\cdot \grad_x h + E\cdot \grad_v \mu + E \cdot \grad_v h = \nu Lh + \nu \cC_\mu + \nu \cC_{h}, \label{def:VPEreform}
\end{align}
with 
\begin{subequations} \label{def:cC}
\begin{align}
Lh & = \Delta_v h + \grad_v \cdot (h v) \\ 
\cC_\mu & = M_T \Delta_v \mu - M_1 \cdot \grad_v\mu \label{def:cCmu} \\
\cC_h & = \rho \left(\Delta_v h + \grad_v \cdot (h v) \right) + M_T \Delta_v h - M_1 \cdot \grad_vh. \label{def:cCh}
\end{align}
\end{subequations}
The two leading terms $Lh$ and $\cC_h$ comprise the effects of collisions between $h$ with the background $\mu$; the $Lh$ term will be dominant for long times, whereas the $\cC_\mu$ term will rapidly decay due to the Landau damping effects. The last term, $\cC_h$, comprises the fully nonlinear collision effects. 

Next, we expand \eqref{def:VPE} on the Fourier side to deal with the phase-mixing by adapting ideas developed in \cite{BMM13,BGM15III,BVW16},    
\begin{align*}
\partial_t \hat{h} - k \cdot \grad_{\eta} \hat{h} + \nu \eta \cdot \grad_\eta \hat{h} + \widehat{E}(t,k) \cdot i \eta \widehat{\mu}(\eta) = -\nu \eta^2 \hat{h} + \cN(t,k,\eta),
\end{align*}
where $\cN = -\widehat{E \cdot \grad_v h} + \nu \widehat{\cC_\mu} + \nu \widehat{\cC_h}$. 
We will use the method of characteristics to deal with the transport in frequency, hence, we write 
\begin{subequations}
\begin{align}
\bar{\eta}(t;k,\eta) & = e^{\nu t} \eta - k\left(\frac{e^{\nu t} - 1}{\nu}\right) \\
\hat{h}(t,k,\eta) & = \hat{f}\left(t,k,e^{-\nu t} \eta + k\left(\frac{1 - e^{-\nu t}}{\nu}\right) \right), \label{def:hfrelat}
\end{align}
\end{subequations} 
which implies 
\begin{align}
\partial_t \hat{f} + \widehat{E}(t,k)\cdot i \bar{\eta}(t;k,\eta) \widehat{\mu}(\bar{\eta}(t;k,\eta)) = -\nu \bar{\eta}^2 \hat{f} + \mathcal{N},  \label{eq:fhat}
\end{align}
where 
\begin{align*}
\mathcal{N}(t,k,\bar{\eta}(t;k,\eta)) & = 
\sum_{\ell \in \Integers^n_\ast} \widehat{E}(t,\ell) \cdot i\left[\eta - k\left(\frac{1-e^{-\nu t}}{\nu}\right) \right] e^{\nu t} \widehat{f}\left(t,k-\ell,\eta - \ell\left(\frac{1-e^{-\nu t}}{\nu} \right) \right) \\
& \quad + \nu \widehat{\cC_{\mu}}(t,k,\bar{\eta}(t;k,\eta)) + \nu \widehat{\cC_h}(t,k,\bar{\eta}(t;k,\eta)).  
\end{align*}
We will use \eqref{eq:fhat} to derive suitable energy estimates on the distribution function using both some new ideas as well as a combination of techniques from \cite{BMM13,BVW16,BMM16}. 
Note the relationship between the moments and the distribution function: 
\begin{subequations} \label{def:Hydromoments}
\begin{align}
\hat{\rho}(t,k) & = \hat{f}\left(t,k,k\left(\frac{1 - e^{-\nu t}}{\nu}\right) \right) \\ 
\widehat{M_1}(t,k) & = e^{-\nu t}\grad_\eta \hat{f}\left(t,k,k\left(\frac{1 - e^{-\nu t}}{\nu}\right) \right) \\ 
\widehat{M_2}(t,k) & = e^{-2\nu t}\Delta_\eta \hat{f}\left(t,k,k\left(\frac{1 - e^{-\nu t}}{\nu}\right) \right). 
\end{align}
\end{subequations}
Note that the analogue of the Orr critical time \cite{Orr07} is no longer $\eta = kt$, but instead now $\eta=\eta_{CT}(t,k) := k\left(\frac{1 - e^{-\nu t}}{\nu}\right)$ (see e.g. \cite{Bedrossian16} for more discussion). 

One of the easiest ways to treat the linearized (collisionless) Vlasov equations is to derive a Volterra equation for the density; see e.g. \cite{MouhotVillani11,BMM13,BMM16}. 
In order to handle collisions in a manner consistent with these treatments, we first apply Duhamel's principle to \eqref{eq:fhat}. 
Hence, denoting 
\begin{align*}
S(t,\tau;k,\eta) = \exp\left[-\nu\int_\tau^t \left[e^{\nu s} \eta - k\left(\frac{e^{\nu s} - 1}{\nu}\right)\right]^2 \dd s\right], 
\end{align*}
we have 
\begin{align}
\hat{f}(t,k,\eta) & = S(t,0;k,\eta) \widehat{f_{in}}(k,\eta) - \int_0^t S(t,\tau;k,\eta)\widehat{E}(\tau,k) \cdot i\bar{\eta}(\tau;k,\eta) \widehat{\mu}(\bar{\eta}(\tau;k,\eta)) \dd \tau \nonumber  \\ & \quad + \int_0^t S(t,\tau;k,\eta) \mathcal{N}(\tau,k,\bar{\eta}(\tau;k,\eta)) \dd \tau.  \label{eq:hatfDuhamel}  
\end{align}
We next restrict \eqref{eq:hatfDuhamel} to $\eta = \eta_{CT}(t,k) = k\left(\frac{1 - e^{-\nu t}}{\nu}\right)$ in order to get an expression for the density. 
Hence, if we define 
\begin{align}
S(t-\tau,k) & = S\left(t,\tau;k,k\frac{1-e^{-\nu t}}{\nu}\right) = \exp\left[-\nu k^2\int_\tau^t \left[\frac{1 - e^{\nu(s-t)}}{\nu}\right]^{2} \dd s\right] \nonumber \\
& = \exp\left[-\nu^{-1} k^2 \left((t-\tau) + 2\frac{e^{\nu(\tau-t)}-1}{\nu} - \frac{e^{2\nu(\tau-t)}-1}{2\nu}\right)\right], \label{def:S}
\end{align}
we can write the density evolution as a Volterra equation (with nonlinear contributions): 
\begin{align}
\hat{\rho}(t,k) & = S(t,k) \widehat{f_{in}}(k,\eta_{CT}(t,k)) - \int_0^t S(t-\tau,k)\widehat{\rho}(\tau,k) \widehat{W}(k) \abs{k}^2\left(\frac{1 - e^{-\nu(t-\tau)}}{\nu}\right) \widehat{\mu}\left(k \left(\frac{1 - e^{-\nu(t-\tau)}}{\nu}\right)\right) \dd \tau \nonumber \\
& \quad + \int_0^t S(t-\tau,k) \mathcal{N}(\tau,k,\bar{\eta}(\tau;k,\eta_{CT}(t,k)))  \dd \tau. \label{eq:rhoDef}
\end{align}
A similar calculation is done to deduce Volterra-like equations for $M_1$ and $M_2$, but they are more technical; see \S\ref{sec:highMoments} for the details.

\subsection{Energy estimates} \label{sec:Energy}
As has been done in several previous works on Landau damping and mixing in fluid mechanics \cite{BMM13,BM13,BMV14,Young14,BMM16,BGM15I,BGM15II,BGM15III}, we apply a bootstrap argument. 
Bootstrap arguments typically require a local well-posedness and propagation of regularity result. 
The following theorem is sufficient for our purposes and can be proved with standard techniques; the proof is omitted for brevity. 
Note that the requirement on $m$ is so make the Fourier restriction necessary to define $\widehat{T}$ pointwise in frequency.  
\begin{lemma}[Local well-posedness] \label{lem:loctheory} 
Let $F_{in} \in H^{\alpha}_m$ for $\alpha > n+2$ with $m$ an integer with $m > n/2+2$ such that $\bar{T}(0,x) > \zeta$ and $\bar{\rho}(0,x) > \zeta$ for all $x$ for some fixed $\zeta \in (0,1)$. 
Then, there exists a time $t^\ast = t^\ast(m,\alpha,n,\zeta)$ and a unique solution $F(t) \in C([0,\tau];H^{\alpha}_m)$ to \eqref{def:VPE} for all $\tau < t^\ast$ 
additionally satisfying $\bar{T}(t,x) > 0$ and $\bar{\rho}(t,x) > 0$ for all $t < t^\ast$.
Moreover, if for some $t \leq t^\ast$ and $\alpha \geq \alpha' > n+2$ there holds 
$\limsup_{\tau \nearrow t} \norm{F(\tau)}_{H^{\alpha'}_m} < \infty$, $\sup_{\tau < t} \norm{\bar{T}^{-1}(\tau)}_{L^\infty} < \infty$, and $\sup_{\tau < t} \norm{\bar{\rho}^{-1}(\tau)}_{L^\infty} < \infty$, 
then we may take $t < t^\ast$ (that is, we may extend the unique solution further in time). 
\end{lemma}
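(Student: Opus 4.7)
The plan is to prove Lemma~\ref{lem:loctheory} by a Picard-type iteration combined with weighted energy estimates, using the hypothesis $\bar\rho(0,x),\bar T(0,x)>\zeta$ to guarantee that the collision operator is \emph{uniformly parabolic in $v$} on a short time interval. The regularity thresholds are natural: $\alpha>n/2+2$ provides the Sobolev embedding $H^{\alpha-2}(\Torus^n\times\Real^n)\hookrightarrow W^{2,\infty}$ needed to estimate the variable-coefficient Laplacian $\bar\rho\bar T\,\Delta_v F$ and the transport $E\cdot\grad_v F$, while the weight $m>n/2+2$ ensures that the hydrodynamic fields $\bar\rho, M_1, M_2$ (and hence $u,\bar T$ via \eqref{def:uT}) are controlled pointwise in $x$ by $\norm{F}_{H^\alpha_m}$, with $\bar\rho,\bar T$ staying bounded away from zero on a short interval by continuity.

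First I would set up the iteration: given $F^{(k)}\in C([0,t^\ast];H^\alpha_m)$ with $\bar\rho^{(k)},\bar T^{(k)}>\zeta/2$, construct the hydrodynamic fields via \eqref{def:uT} and solve the \emph{linear} equation
\begin{equation*}
\partial_t F^{(k+1)} + v\cdot\grad_x F^{(k+1)} + E^{(k)}\cdot\grad_v F^{(k+1)} = \nu\bar\rho^{(k)}\bigl(\bar T^{(k)}\Delta_v F^{(k+1)} + \grad_v\cdot((v-u^{(k)})F^{(k+1)})\bigr)
\end{equation*}
for $F^{(k+1)}$ with initial data $F_{in}$. This is a linear kinetic Fokker--Planck equation with smooth, bounded, strictly positive diffusion coefficient; existence of a unique $H^\alpha_m$ solution follows from standard parabolic theory (e.g.~Galerkin in $v$ combined with a mollification in $x$). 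The crucial point is to establish a uniform a priori bound and a short existence time $t^\ast$ independent of $k$, from which compactness yields a limit.

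The core estimate is obtained by applying $\brak{v}^m\brak{\grad_{x,v}}^\alpha$ to the equation and testing against itself. The commutator $[\brak{\grad_{x,v}}^\alpha,v\cdot\grad_x]$ is of order $\alpha$ (its Fourier symbol is $-\alpha\,k\cdot\eta(1+\abs{k}^2+\abs{\eta}^2)^{\alpha/2-1}$, bounded by $\brak{k,\eta}^\alpha$), so it contributes only $\lesssim\norm{F^{(k+1)}}_{H^\alpha_m}^2$ with no loss of derivative, and $\brak{v}^m$ commutes with $v\cdot\grad_x$. The electric field term is handled via $\norm{E^{(k)}}_{W^{\alpha,\infty}}\lesssim \norm{\bar\rho^{(k)}}_{H^\alpha}\lesssim \norm{F^{(k)}}_{H^\alpha_m}$ (using the Coulomb-type bound $\abs{\widehat W(k)}\lesssim \abs{k}^{-1}$ together with $m>n/2$). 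The collision term produces the good dissipative contribution $\nu\int \bar\rho^{(k)}\bar T^{(k)}\abs{\brak{v}^m\brak{\grad}^\alpha\grad_v F^{(k+1)}}^2$ together with commutator remainders; the commutator $[\brak{v}^m,\Delta_v]$ yields lower-order factors in $v$, absorbed into the dissipation by Young's inequality. Combining yields
\begin{equation*}
\tfrac{d}{dt}\norm{F^{(k+1)}}_{H^\alpha_m}^2 \le C\bigl(1+\norm{F^{(k)}}_{H^\alpha_m}^2\bigr)\bigl(1+\norm{F^{(k+1)}}_{H^\alpha_m}^2\bigr),
\end{equation*}
closing the iteration on a short interval $[0,t^\ast]$ with $t^\ast$ depending only on $\norm{F_{in}}_{H^\alpha_m}$, $\nu$, $\zeta$, and the indices.

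The main obstacle I expect is the careful bookkeeping in the commutators between $\brak{v}^m\brak{\grad_{x,v}}^\alpha$, the transport $v\cdot\grad_x$, and the $v$-Laplacian: these must be reorganized so that the top-order pieces either vanish by skew-symmetry or are controlled by the available parabolic dissipation. Uniqueness follows by an analogous $L^2_m$ energy estimate on the difference of two solutions (the transport is skew-symmetric and coefficient variations are absorbed by the dissipation). Convergence of the iterates in a weaker norm together with the uniform $H^\alpha_m$ bound yields a strong solution by interpolation and weak compactness. The continuation criterion is immediate: as long as $\norm{F}_{H^{\alpha'}_m}$, $\norm{\bar T^{-1}}_{L^\infty}$, and $\norm{\bar\rho^{-1}}_{L^\infty}$ remain finite, the constants in the energy estimate depend only on these quantities, so the local existence time is bounded from below and the solution extends past any such $t$.
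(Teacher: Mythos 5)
The paper does not give a proof of Lemma~\ref{lem:loctheory}; it simply states the result and records that it ``can be proved with standard techniques; the proof is omitted for brevity.'' There is therefore no argument to compare against. Evaluated on its own terms, your Picard iteration with weighted energy estimates --- linearizing the collision coefficients at the $k$-th iterate, using $\bar\rho,\bar T>\zeta$ to get uniform $v$-parabolicity on a short interval, and closing via an $H^\alpha_m$ Gr\"onwall estimate --- is precisely the ``standard technique'' the author has in mind, and the outline of the iteration, uniform bound, compactness, uniqueness, and continuation criterion is correct.

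Two technical inaccuracies are worth correcting, though neither affects the structure. First, the claimed bound $\norm{E^{(k)}}_{W^{\alpha,\infty}}\lesssim\norm{\bar\rho^{(k)}}_{H^\alpha}$ is false for $n\geq 2$: the Coulomb kernel gives $\norm{E}_{H^{\alpha+1}(\Torus^n)}\lesssim\norm{\bar\rho-1}_{H^\alpha(\Torus^n)}$, and Sobolev embedding then controls $\norm{E}_{W^{s,\infty}}$ only for $s<\alpha+1-n/2$, which is below $\alpha$ as soon as $n\geq 2$. The right remedy is exactly the mechanism you invoke for uniqueness: since $E$ is independent of $v$, the principal term $\brak{E\cdot\nabla_v(AF),AF}_{L^2_m}$ vanishes after integrating by parts in $v$, and the remaining commutator $[A,E\cdot\nabla_v]$ is handled by a Kato--Ponce type estimate pairing $\nabla_x E\in L^\infty$ (which only needs $\alpha>n/2+1$) against $F\in H^\alpha_m$, plus $E\in H^\alpha$ against a low-regularity norm of $\nabla_v F$. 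Second, $H^{\alpha-2}(\Torus^n\times\Real^n)\hookrightarrow W^{2,\infty}$ would require $\alpha>n+2$ on the $2n$-dimensional phase space, which exceeds the hypothesis $\alpha>n/2+2$; what the proof actually needs is pointwise control of the $x$-only hydrodynamic fields $\bar\rho,u,\bar T$ and a couple of their $x$-derivatives, and this follows from $\bar\rho\in H^\alpha(\Torus^n)$ (using $m>n/2$ to integrate out $v$) together with $\alpha>n/2+2$.
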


Next, we are interested in obtaining uniform-in-time higher regularity estimates on the solution to \eqref{eq:fhat}, as well as the mixing-enhanced collisional relaxation.  
We will make use of Fourier multiplier norm ideas from \cite{BGM15III,BVW16} (with kinetic theory-specific adaptations) as well as a variation of the bootstrap arguments used previously in studies of nonlinear Landau damping \cite{BMM13,BMM16,Bedrossian16}. 
The main technical novelties in the proof arise due to the nonlinearity in the collision operator, in particular, the challenge in dealing with large potential regularity losses ($x$-dependence in the coefficients creates a kind of `cross-diffusion' between Fourier modes, which can be problematic for the specific kinds of delicate energy estimates we need to make in order to deduce Landau damping and enhanced collisional decay) and in the controlling of higher velocity moments of the solutions (in the collisionless works, controlling the moments is more or less a triviality, but this difficulty plays a centrol role here and dictates many of our choices below). 
These issues are discussed in more precise detail in Remarks \ref{rmk:12deriv}--\ref{rmk:collisvsnoncollis} below and in the main body of the paper.      
 
In order to obtain mixing-enhanced collisional decay, we adapt an idea from \cite{BGM15III}. 
Specifically, the following multiplier is variant of one used therein, but here designed with the $\nu$-dependent critical times accounted for: 
\begin{align*}
\partial_t M(t,k,\eta) & = -\frac{\nu^{1/3}}{1 + \nu^{2/3} e^{2\nu t}\abs{\eta - k \frac{1-e^{-\nu t}}{\nu}}^2} \frac{e^{2\nu t} \abs{\nu \eta - k}^2}{\brak{e^{\nu t}(\nu \eta - k)}^2}  M(t,k,\eta) \\
M(0,k,\eta) & = 1. 
\end{align*}
The relevant properties of this multiplier are outlined in Appendix \S\ref{sec:propDissM}. 
The norms we apply are then built from the multiplier 
\begin{subequations} \label{def:Asc} 
\begin{align}
A_{s,c}(t,k,\eta) & = e^{c \nu^{1/3} t} \jap{k,\eta}^s M(t,k,\eta), & \textup{for } k\neq 0 \\
A_{s,c}(t,k,\eta) & = \jap{\eta}^s, &  \textup{for } k = 0. 
\end{align}
\end{subequations}
For convenience, denote $\partial_v^t := e^{\nu t}\left(\grad_v - \grad_x \frac{1-e^{-\nu t}}{\nu}\right)$ and note, 
\begin{align*}
\widehat{\partial_v^t f} = i\left(\eta e^{\nu t} - k\frac{e^{\nu t} - 1}{\nu}\right)\hat{f}(t,k,\eta) = i\bar{\eta} \hat{f}(t,k,\eta). 
\end{align*}
Define the following norms on a distribution function $f(t,x,v)$ or $f(t,v)$: for constants $K_j$ determined by the proof chosen such that $1 \leq K_j < K_{j+1} <...$,
\begin{subequations} \label{def:FcD}
\begin{align}
\norm{f(t)}_{F^{s,c}_m} & := \left(\sum_{\alpha \in \Naturals^n:\abs{\alpha} \leq m} \frac{e^{-2\abs{\alpha}\nu t}}{K_{\abs{\alpha}}}\norm{A_{s,c}(t,\grad) (v^\alpha f(t))}_{L^2}^2\right)^{1/2} \\ 
\norm{f(t)}_{\cD^{s,c}_m} & := \left(\sum_{\alpha \in \Naturals^n:\abs{\alpha} \leq m} \frac{e^{-2\abs{\alpha}\nu t}}{K_{\abs{\alpha}}}\norm{\partial_v^t A_{s,c}(t,\grad) (v^\alpha f(t))}_{L^2}^2\right)^{1/2}. 
\end{align}
\end{subequations}
Note the growth/loss of decay for higher order moments, which is natural in view of \eqref{def:hfrelat}. 
For $x$-dependent modes, this loss is very slight compared to the overall $e^{-c\nu^{1/3}t}$ decay. 
Notice also that, by Lemma \ref{lem:propM}, $M \approx 1$, and hence the presence of $M$ does not change the norm defined by the multiplier $A_{s,c}$:
\begin{align*}
\norm{A_{s,c} f}_{L^2} \approx e^{c \nu^{1/3}t}\norm{\brak{\grad}^{s} f}_{L^2}.
\end{align*}
In this way, this particular use of the multiplier methods from \cite{BGM15III,BVW16} is reminiscent of a Fourier-side variant of the ghost energy method \cite{Alinhac01}. 
Finally, observe also that the ordering of $A_{s,c}$ and $v^\alpha$ will normally be irrelevant:
\begin{align}
\norm{f}_{F^{s,c}_m} & \approx \left(\sum_{j = 0}^m e^{-2j\nu t} \norm{\brak{v}^{j} A_{s,c}(t,\grad) f(t)}_{L^2}^2 \right)^{1/2} \approx \sum_{j=0}^m e^{-j\nu t} e^{c\nu^{1/3}t} \norm{f(t)}_{H^s_j}. \label{ineq:FHsj}
\end{align}
This follows from Liebniz's rule on the Fourier side and noting that the commutators $[v^\alpha,A_s]f$ involve lower order moments of $f$, which have slower growth than moments of order $\alpha$ (using also \eqref{ineq:detaM} to control the derivatives of $M$).  
Similarly, for $m \geq 1$, 
\begin{align}
\left(\sum_{j=0}^m e^{-2 j \nu t} e^{2c \nu^{1/3}t}\norm{\partial_v^t f}^2_{H^{s}_j}\right)^{1/2} \lesssim \norm{f}_{\cD^{s,c}_m} + \norm{f}_{F^{s,c}_{m-1}}. \label{ineq:cDHsj}
\end{align}
For the hydrodynamic quantities, the norms are defined as follows for a function $Q=Q(t,x)$: 
\begin{align}
\norm{Q}_{\cM^{s,c}} := \left(\sum_{k \in \Integers^d} A^2_{s,c}(t,k,kt) \abs{\hat{Q}(t,k)}^2\right)^{1/2}. 
\end{align}
To measure $h$ with high moments and low derivatives, we fix an arbitrary $\theta \in (1,2)$ and define: 
\begin{align}
\norm{h(t)}_{\cH} = \left(\sum_{\abs{\alpha} + \abs{\gamma} \leq n+2} \brak{t}^{-2\theta\abs{\gamma}}\norm{D_x^\alpha D_v^\gamma h(t)}^2_{L^2_{m'-\abs{\alpha}}} \right)^{1/2}.
\end{align}
It is clear that the $\theta\abs{\gamma}$ growth is connected to the phase-mixing, however, the relation between $x$ derivatives and velocity localization is a more subtle detail connected to the nonlinear collision operator. 
 
Let $\beta < \sigma$, $0 < \delta_1 < \delta \ll 1$, $s < \sigma$, $p \geq 6$, and $K_{Hj},K_H,K_L, K_{MHi}, K_{MLi}$ be constants determined by the proof depending only on $\sigma$, $n$, $m$ and let $T^\ast > 0$ be the largest time such that all of the following estimates hold for $I = [0,T^\ast]$ (Lemma \ref{lem:loctheory} implies $T^\ast > 0$):
\begin{itemize} 
\item[(A)] high norm distribution function estimates: for all integers $0 \leq j \leq 3$ and $t \in I$, 
\begin{subequations} \label{ineq:bootf}
\begin{align}
\brak{t}^{-p}\norm{f(t)}_{F^{\sigma+1/2-j,\delta_1}_{m+1+j}}^2  + \nu \int_0^{t}\brak{\tau}^{-p}\norm{f(\tau)}_{\cD^{\sigma+1/2-j,\delta_1}_{m+1+j}}^2 \dd \tau  & \leq 4 K_{Hj}^2 \eps^2 \label{boot:hineq} \\ 
\norm{h_0(t)}_{H^{s}_m} & \leq 4 K_{H} \eps e^{-\nu t} \label{boot:therm}; 
\end{align} 
\end{subequations}
\item[(B)] lower norm distribution function estimates: for all integers $4 \leq j < \sigma+1/2-n-2)$ and $t \in I$, 
\begin{subequations}
\begin{align}
\norm{f(t)}_{F^{\sigma+1/2-j,\delta_1}_{m+1+j}}^2  + \nu \int_0^{t}\norm{f(\tau)}_{\cD^{\sigma+1/2-j,\delta_1}_{m+1+j}}^2 \dd \tau  & \leq 4 K_{Lj}^2 \eps^2 \label{boot:lowneq}  \\ 
e^{-K\nu t} \norm{h(t)}_{\cH}^2 + \nu \int_0^t e^{-K\nu \tau} \norm{\grad_v h(\tau)}_{\cH}^2 \dd \tau & \leq 4 K_{L}^2 \eps^2; \label{boot:lowhimoment}
\end{align} 
\end{subequations} 
\item[(C)] estimates on the moments (recall the definitions \eqref{def:M1M2T} and \eqref{def:uT}), 
\begin{subequations} \label{ineq:boothydro} 
\begin{align} 
\norm{\abs{\grad_x}^{1/2} \rho}_{L^2_t(I;\cM^{\sigma,\delta})} & \leq 4K_{MH0}\eps \label{ineq:bootArho} \\  
\norm{\abs{\grad_x}^{1/2} M_1}_{L^2_t(I;\cM^{\sigma,\delta})} & \leq 4K_{MH1}\eps \label{ineq:bootAM1} \\ 
\norm{\brak{\grad_x}^{1/2} M_2}_{L^2_t(I;\cM^{\sigma,\delta})} & \leq 4K_{MH2}\eps \label{ineq:bootAM2} \\ 
\norm{\rho}_{L^\infty_t(I;\cM^{\beta,\delta})} & \leq 4K_{ML0}\eps \label{ineq:bootLinftArho} \\ 
\norm{M_1}_{L^\infty_t(I;\cM^{\beta,\delta})} & \leq 4K_{ML1}\eps \label{ineq:bootLinftAM1}\\ 
\norm{M_2}_{L^\infty_t(I;\cM^{\beta,\delta})} & \leq 4K_{ML2}\eps \label{ineq:bootLinftAM2}. 
\end{align}
\end{subequations}
\end{itemize} 
\begin{remark} \label{rmk:bootconst}
How the various constants are set is important to closing the bootstrap. 
First $\delta$ is chosen  depending only on universal constants and $0 < \delta_1 < \delta$ is set arbitrarily.
The constant $p$ is chosen large depending only on universal constants. 
The constant $\beta$ is chosen sufficiently large depending only on $n$ over various steps in the proof and the gap $\sigma-\beta$ is chosen depending only on universal constants (this hence sets $\sigma_0$). 
The constant $s < \sigma$ is set with $\sigma-s$ fixed small by the proof depending only on $m$ and $n$ (by Remark \ref{rmk:HsmInterp}). 
The constant $K$ is fixed large depending only on $m'$ and $n$ (in \S\ref{sec:HiMoment}). 
Then, $K_{HM0}$ is set depending only on the linearized equations together with $\delta,\sigma,n$ (by Lemma \ref{lem:LinLandau}).
Then $K_{ML0}$ is set depending on previously assigned constants, followed by $K_{MHj}$ and $K_{MLj}$. 
Similarly, $K_{L1}$, then $K_{L2}$, and so forth is set. Similarly $K_{H1}$ is set then $K_{H2}$ and so forth.
Finally, $\nu_0$ and $c_0$ are chosen small depending on all of the previously assigned constants.  
\end{remark} 
\begin{remark} \label{rmk:12deriv} 
The gain of $1/2$ derivative in the moment estimates without the time growth seen in \eqref{boot:hineq} is the same ``free''  gains in regularity seen in velocity averaging lemmas \cite{GolseEtAl1985,GolseEtAl1988}. 
This and the hypoelliptic smoothing in $x$ is crucial to making the proof work, as this is what allows to use $\sigma+1/2$ in \eqref{boot:lowneq} and $\sigma$ in \eqref{ineq:bootArho} (note that a larger gap was used in \cite{BMM13,Young14,BMM16,Bedrossian16}, but this is not be possible due to the nonlinear collisions). 
\end{remark}
\begin{remark}
Note that we are choosing $\delta > \delta_1$. This is very useful in the proof and only possible due to the particular structure of \eqref{eq:rhoDef} which does not permit the kinds of nonlinear interactions that would normally make this impossible.  
\end{remark}

\begin{remark}
The above scheme is significantly more detailed than the schemes used in e.g. \cite{BMM13,Young14,BMM16}. 
This is partly due to the drift term $\grad_v \cdot (vF)$ in the linear Fokker-Planck operator, but is more due to the complications from the nonlinear, $x$-dependent coefficients in the Fokker-Planck collision operator in \eqref{def:VPE}. 
The complications involve both dealing with velocity localization and in dealing with potential regularity loss. 
If one were to replace $\bar{\rho} \mapsto 1$ in \eqref{def:VPE}, some aspects of the velocity localization arguments would simplify, however, the $\bar{\rho}$ is more physical \cite{GoldstonRutherford95}. 
\end{remark}
\begin{remark} \label{rmk:collisvsnoncollis}
One of the reasons for \eqref{boot:lowhimoment} is that \eqref{boot:lowneq} and \eqref{boot:hineq} are deduced with methods more suited for the collisionless regime, which does not handle some of the nonlinear structure in the collision operator efficiently. The estimate \eqref{boot:lowhimoment} on the other hand, is not very useful for dealing with collisionless phase-mixing solutions, however, in this estimate it is easier to localize the effects of the nonlinear collision operator in velocity. 
\end{remark} 
\begin{remark} 
For simplicity of notation, we will often write $L^p_t(I;Z) = L^p_t Z$ in what follows. 
\end{remark}

Theorem \ref{thm:main} follows immediately from the following (together with Lemma \ref{lem:loctheory}). 
\begin{proposition} \label{prop:boot}
There exists a constant $c_0 = c_0(n,\sigma,m,m')$ and a constant $\nu_0 = \nu_0(n,\sigma,m,m')$ such that if $\nu < \nu_0$ and $\eps < c_0 \nu^{1/3}$, then we 
have that the estimates in \eqref{ineq:bootf} and \eqref{ineq:boothydro} all hold on $[0,T^\ast]$ with the 4's replaced by 2's.  
\end{proposition}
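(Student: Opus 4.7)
The plan is to close the bootstrap in the order dictated by Remark \ref{rmk:bootconst}: first the moment estimates for $\rho$, then for $M_1, M_2$, then the high-norm distribution function estimates, then the lower-norm and $\cH$ estimates. Each step uses the current bootstrap assumptions as input and reduces the constant on the right-hand side from $4$ to $2$, provided the threshold $\eps \leq c_0 \nu^{1/3}$ holds with $c_0$ small enough (and $\nu$ small enough).

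For the moment bootstraps \eqref{ineq:bootArho}-\eqref{ineq:bootLinftAM2}, I would work from the Volterra equation \eqref{eq:rhoDef} for $\rho$ and its analogues for $M_1, M_2$ derived in \S\ref{sec:highMoments}. The linear contribution on the right-hand side is controlled by the linearized Landau damping estimate (Lemma \ref{lem:LinLandau}); the $\abs{\grad_x}^{1/2}$ gain visible in \eqref{ineq:bootArho}-\eqref{ineq:bootAM2} arises, in an $L^2_t$-averaged sense, from the collisional kernel $S(t-\tau,k)$ of \eqref{def:S}. For the nonlinear source $\mathcal{N}$, the transport contribution $\widehat{E\cdot \grad_v h}$ is paired with the hypoelliptic gain built into \eqref{boot:hineq}, while the collision remainders $\nu \widehat{\cC_\mu} + \nu \widehat{\cC_h}$ are absorbed by the dissipation norm $\cD$ and the velocity localization in $F^{s,c}_m$. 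All nonlinear contributions then scale like $\eps^2 + \nu \eps^2 \lesssim c_0 \nu^{1/3} \eps$, and are therefore strictly dominated by the linear output under the threshold \eqref{ineq:threshold}.

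Next, the high-norm estimates \eqref{boot:hineq} are closed by an energy method on \eqref{eq:fhat}. For each $0 \leq j \leq 3$ and each multi-index $\alpha$ with $\abs{\alpha} \leq m+1+j$, I would pair the equation for $f$ with $\brak{t}^{-p} K_{\abs{\alpha}}^{-1} e^{-2\abs{\alpha}\nu t} A_{\sigma+1/2-j,\delta_1}^2(t,\grad)\, v^{2\alpha} f$ and sum in $\alpha$. The time derivative $\partial_t M$ inside $A_{s,c}$ produces a ghost-energy (Cauchy--Kovalevskaya type) term that absorbs the reaction from $\widehat E \cdot i\bar\eta\,\widehat\mu$ by trading phase-mixing regularity for the $e^{c \nu^{1/3}t}$ growth; the intrinsic dissipation $-\nu\bar\eta^2 \hat f$ yields the $\nu\,\cD$ term on the left-hand side; and the commutators $[v^\alpha, A_{s,c}]$ produce only lower-order moment remainders by \eqref{ineq:FHsj}. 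The nonlinear transport and collisional terms are controlled by the moment bootstraps just improved together with the dissipation $\cD$. The zero-mode estimate \eqref{boot:therm} is handled separately: it uses the spectral gap of the linearized Fokker--Planck operator $L$ discussed in \S\ref{sec:Zeromode}, with nonlinear remainder again controlled by the moment bootstraps.

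The lower-norm estimates \eqref{boot:lowneq} follow the same scheme without the $\brak{t}^{-p}$ weight, since the larger gap between $\sigma+1/2-j$ and the moment regularity allows the full $e^{\delta_1 \nu^{1/3}t}$ decay to be retained. The high-moment estimate \eqref{boot:lowhimoment} needs a distinct energy, adapted to the $x$-pointwise structure of $\cC_h$: multiply \eqref{def:VPEreform} by $e^{K\nu t} \brak{v}^{2(m'-\abs{\alpha})} \brak{t}^{-2\theta\abs{\gamma}} D_x^{2\alpha} D_v^{2\gamma} h$, sum, and integrate by parts in $v$; the good term $\nu \int e^{K\nu\tau} \norm{\grad_v h}^2_{\cH}\,\dd \tau$ comes from $L$, the $v$-drift piece is absorbed by choosing $K$ large as in \S\ref{sec:HiMoment}, and the coefficients $M_T$ and $M_1$ in $\cC_h$ are put in $L^\infty_x$ via Sobolev embedding from the moment bootstraps. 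The main obstacle throughout will be the nonlinear echo chains inside $\mathcal{N}$ on time-scales approaching $\nu^{-1}$: one must exploit the collisional kernel $S(t-\tau,k)$ together with the monotonic part of $M$ to beat the echo amplification, and this is precisely where the scaling $\eps \lesssim \nu^{1/3}$ enters--the collisional decay rate $\nu^{1/3}$ must dominate the nonlinear time-scale $\eps^{-1}$. Once all six groups of estimates are improved, Lemma \ref{lem:loctheory} extends the solution past any putative maximal $T^\ast$, completing the bootstrap.
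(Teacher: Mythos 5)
Your proposal captures the paper's architecture faithfully and in the correct order (Remark \ref{rmk:bootconst}): the Volterra formulation with Lemma \ref{lem:LinLandau} closes the moment estimates, the ghost-energy multiplier $A_{s,c}$ closes \eqref{boot:hineq} and \eqref{boot:lowneq}, the Gallay--Wayne spectral gap closes \eqref{boot:therm}, and the separate $\cH$-energy closes \eqref{boot:lowhimoment}; the threshold $\eps\lesssim\nu^{1/3}$ enters exactly where you indicate, in absorbing the nonlinear echo chain against the $\nu^{1/3}$ collisional decay built into $S$. One small correction: the free $\abs{\grad_x}^{1/2}$ gain in \eqref{ineq:bootArho}--\eqref{ineq:bootAM2} comes primarily from velocity averaging (the Sobolev trace lemma after the change of variables $s=\abs{k}(1-e^{-\nu t})/\nu$, exactly as in the collisionless case), while $S(t-\tau,k)$ supplies the hypoelliptic smoothing in $x$ and the enhanced exponential decay rather than this particular half-derivative gain.
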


For $\sigma >\beta > n/2$, by the Sobolev space product rule together with \eqref{eq:uTexpan} and \eqref{ineq:boothydro}, there holds the following. 
\begin{lemma} \label{lem:uTctrls} 
For $\eps$ sufficiently small, under the bootstrap hypotheses, there holds (recall \eqref{def:MT}), 
\begin{subequations} 
\begin{align}
\norm{\brak{\grad_x}^{1/2} u}_{L^2_t(I;\cM^{\sigma,\delta})} & \lesssim K_{MH1}\eps \\ 
\norm{\brak{\grad_x}^{1/2} T}_{L^2_t(I;\cM^{\sigma,\delta})} & \lesssim K_{MH2}\eps \\
\norm{\brak{\grad_x}^{1/2} M_T}_{L^2_t(I;\cM^{\sigma,\delta})} & \lesssim (K_{MH1} + K_{MH2})\eps \label{ineq:MThi} \\
\norm{u}_{L^\infty_t(I;\cM^{\beta,\delta})} & \lesssim K_{ML1}\eps \\ 
\norm{T}_{L^\infty_t(I;\cM^{\beta,\delta})} & \lesssim K_{ML2}\eps \\
\norm{M_T}_{L^\infty_t(I;\cM^{\beta,\delta})} & \lesssim (K_{ML2} + K_{HM1})\eps.
\end{align}
\end{subequations}
\end{lemma}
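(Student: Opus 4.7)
The plan is to treat each identity in \eqref{eq:uTexpan} and \eqref{def:MT} as a power series in the already-controlled hydrodynamic quantities $\rho, M_1, M_2$, and apply the standard Sobolev algebra property of the spaces $\cM^{s,c}$ at regularity level $s > n/2$. Concretely, using the inequalities $\jap{k,kt}^s \lesssim \jap{k-\ell,(k-\ell)t}^s + \jap{\ell,\ell t}^s$ (valid for $s\geq 0$) and $|k|^{1/2} \leq |k-\ell|^{1/2} + |\ell|^{1/2}$, combined with Cauchy--Schwarz and the fact that $\sum_{\ell \neq 0}\jap{\ell,\ell t}^{-2\beta} \lesssim \jap{t}^{-2\beta}$ converges for $\beta > n/2$, a standard convolution estimate on the Fourier side gives, for $s \in \{\sigma, \beta\}$ and any $c \geq 0$,
\begin{align*}
\norm{fg(t)}_{\cM^{s,c}} &\lesssim \norm{f(t)}_{\cM^{s,c}}\norm{g(t)}_{\cM^{\beta,0}} + \norm{f(t)}_{\cM^{\beta,0}}\norm{g(t)}_{\cM^{s,c}}, \\
\norm{|\grad_x|^{1/2}(fg)(t)}_{\cM^{s,c}} &\lesssim \norm{|\grad_x|^{1/2}f(t)}_{\cM^{s,c}}\norm{g(t)}_{\cM^{\beta,0}} + \norm{f(t)}_{\cM^{\beta,0}}\norm{|\grad_x|^{1/2}g(t)}_{\cM^{s,c}}.
\end{align*}
Note that the bootstrap hypotheses \eqref{ineq:bootLinftArho}--\eqref{ineq:bootLinftAM2} immediately yield $\norm{Q(t)}_{\cM^{\beta,0}} = e^{-\delta\nu^{1/3}t}\norm{Q(t)}_{\cM^{\beta,\delta}} \lesssim K_{ML j}\eps$ for $Q \in \{\rho, M_1, M_2\}$.

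Iterating the algebra property gives $\norm{\rho^j(t)}_{\cM^{\beta,0}} \lesssim (C\eps)^j$, so for $\eps$ sufficiently small the geometric series $R := \sum_{j\geq 0}(-\rho)^j$ converges in $L^\infty_t \cM^{\beta,0}$ with bound $\lesssim 1$, and the product rule together with \eqref{ineq:bootArho} yields $\norm{|\grad_x|^{1/2}R}_{L^2_t\cM^{\sigma,\delta}} \lesssim K_{MH0}\eps$. Since $u = M_1 R$, the $L^\infty_t \cM^{\beta,\delta}$ bound on $u$ follows directly from the product rule and \eqref{ineq:bootLinftAM1}, while the $L^2_t \cM^{\sigma,\delta}$ bound on $|\grad_x|^{1/2}u$ is obtained by placing the half-derivative either on $M_1$ (contributing $\lesssim K_{MH1}\eps$ via \eqref{ineq:bootAM1}) or on $R$ (contributing $\lesssim K_{ML1}K_{MH0}\eps^2$, absorbable for small $\eps$).

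The estimates for $T$ then follow from the expansion $nT = (M_2 - (1+\rho)|u|^2)R$: the $M_2 R$ piece contributes the leading $K_{MH2}\eps$ term exactly as above, while $(1+\rho)|u|^2 R$ is quadratic in $u$ and hence $O(\eps^2)$, absorbable for small $\eps$ (with sub-leading $K_{MH1}\eps$ dependence through $u$). The bound on $M_T = (1+\rho)T$ is a direct application of the product rule combined with the just-derived $T$-estimates and the bounds on $\rho$.

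The main (minor) obstacle is bookkeeping of the $e^{c\nu^{1/3}t}$ weights: in each product only one factor can carry the full $\cM^{s,\delta}$ norm, with the remaining factors sitting in $\cM^{\beta,0}$, where the decay $e^{-\delta\nu^{1/3}t}$ built into the bootstrap cancels the extra $e^{\delta\nu^{1/3}t}$ in the target space. This dictates the placement of $L^\infty_t$ and $L^2_t$ factors in every term but creates no essential difficulty once the algebra property is in hand.
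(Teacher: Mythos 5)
Your approach is exactly the one the paper intends: the paper's ``proof'' is the single sentence preceding the lemma, which cites the Sobolev algebra property of $\cM^{s,c}$ together with the geometric-series expansions \eqref{eq:uTexpan} and the bootstrap controls \eqref{ineq:boothydro}, and you have simply filled in the standard bookkeeping. Two small imprecisions worth noting, though neither is a genuine gap: (i) the identity $\norm{Q}_{\cM^{\beta,0}} = e^{-\delta\nu^{1/3}t}\norm{Q}_{\cM^{\beta,\delta}}$ you invoke holds only when $\hat Q(t,0)=0$ (true for $\rho$ and $M_1$ by the conservation laws, but \emph{not} for $M_2$, whose zero mode equals $-\norm{E}_{L^2}^2$, nor for powers $\rho^j$ with $j\geq 2$); the one-sided inequality $\norm{Q}_{\cM^{\beta,0}}\leq\norm{Q}_{\cM^{\beta,\delta}}$ is what you actually need and always holds. (ii) As literally written, your product estimate for $\abs{\grad_x}^{1/2}(fg)$ omits the cross terms in which the half-derivative and the $\sigma$-weight land on different factors; on the standard paraproduct reading they travel together with the high-frequency factor, and in any case the gap $\sigma-\beta=6$ absorbs these cross terms, so the conclusion stands.
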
 
\begin{remark} 
In fact, $u_0$ and $T_0$ both decay much faster than $u_{\neq}$ and $T_{\neq}$. 
\end{remark}

We can also transfer the estimate \eqref{boot:lowhimoment} into estimates on $f$. 
\begin{lemma} \label{lem:cHvsFcD}
There holds, 
\begin{subequations} \label{ineq:cHF}
\begin{align}
\norm{f}_{F^{\textup{Ceil}(n/2+2),0}_{m'}} & \lesssim \brak{t}^{(1+\theta)\textup{Ceil}(n/2+2)}\norm{h(t)}_{\cH} \\ 
\norm{f}_{\cD^{\textup{Ceil}(n/2+2),0}_{m'}} & \lesssim \brak{t}^{(1+\theta)\textup{Ceil}(n/2+2)}\left(\norm{\grad_v h(t)}_{\cH} + \norm{h(t)}_{\cH}\right). 
\end{align}
\end{subequations}
\end{lemma}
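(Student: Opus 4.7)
My plan rests on the observation that the Fourier-side substitution \eqref{def:hfrelat} is the Plancherel image of a physical-space linear change of variables. Writing $a := e^{-\nu t}$ and $b := (1-e^{-\nu t})/\nu \leq t$, one has $f(t,x,v) = e^{-n\nu t}\, h(t, x+bv, av)$. This shear-dilation $\Phi_t(x,v) = (x+bv,av)$ has Jacobian $e^{-n\nu t}$, so $L^2$ norms transfer between $f$ and $h$ at a cost of explicit powers of $e^{\nu t}$ and $t$; the chain rule turns each $\partial_v$ on $f$ into $b\partial_y + a\partial_w$ on $h$, while a direct computation shows that the transport-corrected derivative $\partial_v^t$ straightens to $\partial_v^t f = e^{-n\nu t}(\partial_w h)\circ \Phi_t$. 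This last identity is precisely why $\grad_v h$, rather than $h$ itself, appears on the right-hand side of the $\cD^{s,0}_{m'}$ bound.

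I would prove the first estimate by Plancherel on the Fourier side. Using $\widehat{v^\alpha f} = (-i\partial_\eta)^\alpha \hat f$, the relation $\hat f(k,\xi) = \hat h(k,(\xi-bk)/a)$, and the substitution $\zeta = (\xi-bk)/a$ (with Jacobian $a^n = e^{-n\nu t}$), one gets
\[
 e^{-2|\alpha|\nu t}\,\norm{A_{s,0}(v^\alpha f)}_{L^2}^2 \lesssim e^{-n\nu t}\sum_k \int \brak{k,a\zeta+bk}^{2s}\,|\widehat{w^\alpha h}(k,\zeta)|^2\,d\zeta.
\]
The pointwise symbol estimate $\brak{k,a\zeta+bk} \lesssim \brak{t}\brak{k,\zeta}$, which follows from $a\leq 1$ and $b\leq t$, converts this into $\brak{t}^{2s}\norm{w^\alpha h}_{H^s}^2$. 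Summing over $|\alpha|\leq m'$ and invoking \eqref{ineq:FHsj} yields $\norm{f}_{F^{s,0}_{m'}}^2 \lesssim \brak{t}^{2s}\norm{h}_{H^s_{m'}}^2$.

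The next step is to transfer $\norm{h}_{H^s_{m'}}^2 \approx \sum_{|\mu|+|\nu|\leq s}\norm{\brak{v}^{m'}\partial_x^\mu \partial_v^\nu h}^2$ to $\norm{h}_{\cH}^2$. Pure velocity-derivative terms ($\mu=0$) are controlled directly by the $|\alpha|=0$ slot of $\cH$ at a loss of only $\brak{t}^{\theta|\nu|}$. Mixed terms with $|\mu|\geq 1$ are the delicate case, since $\cH$ only provides the reduced-moment estimate $\norm{\brak{v}^{m'-|\mu|}\partial_x^\mu\partial_v^\nu h}^2$; I would close the gap by a Gagliardo--Nirenberg interpolation in the velocity variable between this reduced-moment bound and the full-moment, pure-$v$-derivative bound $\norm{\brak{v}^{m'}\partial_v^\nu h}^2$ that $\cH$ supplies with $\alpha=0$. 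The resulting interpolation loss is at most $\brak{t}^{\theta s}$, which combines with the $\brak{t}^s$ from the symbol estimate to give the claimed $\brak{t}^{(1+\theta)s}$.

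The second inequality follows by applying the same scheme to $\partial_v^t A_{s,0}(v^\alpha f)$: the main contribution arises from $\partial_v^t$ landing on $f$ itself, which via the straightening identity corresponds to $(\grad_w h)\circ \Phi_t$ and is handled by repeating the argument above with $h$ replaced by $\grad_v h$; the commutator $[\partial_v^t, v^\alpha] \sim |\alpha| v^{\alpha-1}$ produces lower-order-moment terms absorbed into the $\norm{h}_{\cH}$ piece. The main obstacle throughout is the moment/derivative trade-off in the third paragraph --- the $\cH$-norm enforces the budget $(\text{velocity moment}) + (x\text{-derivative order}) \leq m'$, whereas $F^{s,0}_{m'}$ lets those two indices range independently up to $m'$ and $s$ --- so the non-routine part is exploiting the velocity-interpolation structure of $\cH$ carefully enough to recover the deficient moments for the top-derivative terms while tracking the precise $\brak{t}^{(1+\theta)s}$ exponent.
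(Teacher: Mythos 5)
Your physical-space reformulation $f(t,x,v)=e^{-n\nu t}\,h(t,x+bv,av)$ with $a=e^{-\nu t}$, $b=(1-e^{-\nu t})/\nu$, the straightening identity $\partial_v^t f = e^{-n\nu t}(\grad_w h)\circ\Phi_t$, the Plancherel step, and the symbol estimate $\brak{k,a\zeta+bk}\lesssim\brak{t}\brak{k,\zeta}$ are all correct, and at the level of what it actually records, the paper's own proof is the same observation: it writes $\hat f(t,k,\eta)=\hat h(t,k,\zeta)$ for $\zeta=e^{\nu t}\eta-k(e^{\nu t}-1)/\nu$, records the pointwise bound $\brak{k,\eta}^\ell\abs{D_\eta^\alpha\hat f}\lesssim(\brak{k,\zeta}^\ell+\brak{k(1-e^{-\nu t})/\nu}^\ell)e^{\abs{\alpha}\nu t}\abs{D_\zeta^\alpha\hat h}$, and stops; the integration, the sum over $\alpha$, and the comparison with $\cH$ are left implicit. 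You go further and correctly identify the moment/derivative bookkeeping as the nontrivial part.

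The Gagliardo--Nirenberg step you propose to close that bookkeeping does not work. After the change of variables, the $F^{s,0}_{m'}$-norm requires, for every $\abs{\alpha}\le m'$ and every splitting $\abs{\mu}+\abs{\nu}\le s=\textup{Ceil}(n/2+2)$, control of $\norm{\brak{v}^{\abs{\alpha}}\partial_x^\mu\partial_v^\nu h}_{L^2}$; but $\cH$ pairs $\abs{\mu}$ spatial derivatives only with $m'-\abs{\mu}$ moments, so it supplies this quantity only when $\abs{\alpha}+\abs{\mu}\le m'$. When $\abs{\alpha}$ is near $m'$ and $\abs{\mu}\ge 1$, the target lies strictly outside the convex hull of the $(\text{moment},\,x\text{-derivative})$ pairs that $\cH$ controls, and a one-variable interpolation cannot escape a convex hull. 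Concretely, between $\norm{\brak{v}^{m'-\abs{\mu}}\partial_x^\mu\partial_v^\nu h}$ and $\norm{\brak{v}^{m'}\partial_v^\nu h}$, the first is the only factor carrying the $x$-derivatives: reproducing $\partial_x^\mu$ forces all interpolation weight onto it, at which point you have gained nothing in the moment, while any weight on the second discards $x$-derivatives that a pure-$v$ term cannot supply. The budget closes cleanly if the left-hand side is taken at moment $m'-s$ (so $\abs{\alpha}+\abs{\mu}\le m'$ automatically); as stated at moment $m'$, neither the paper's two-line argument nor your interpolation supplies the missing moments, so this step is a genuine gap in the proposal.
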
 
\begin{proof} 
By definition
\begin{align*}
\hat{f}(t,k,\eta) & = \hat{h}\left(t,k,e^{\nu t} \eta - k\left(\frac{e^{\nu t} - 1}{\nu}\right) \right), 
\end{align*}
and hence, 
\begin{align*}
\brak{k,\eta}^{\ell} \abs{D^\alpha_\eta \hat{f}(t,k,\eta)} \lesssim \left(\brak{k,e^{\nu t} \eta - k \frac{e^{\nu t}-1}{\nu}}^{\ell} + \brak{k \frac{1-e^{-\nu t}}{\nu}}^{\ell}\right) e^{\abs{\alpha} \nu t} \abs{D_\eta^\alpha \hat{h}\left(t,k,e^{\nu t} \eta - k\left(\frac{e^{\nu t} - 1}{\nu}\right) \right)}. 
\end{align*}
\end{proof}

\begin{remark} 
In the ensuing proof, we will often omit the dependence of implicit constants on the bootstrap constants $K_H$, $K_L$,... where the precise value is not important due to the presence of a small parameter such as $\eps$ or $\nu$. We instead only make this dependence explicit where necessary. 
\end{remark}

\section{Density estimates} \label{sec:density}
In this section we improve \eqref{ineq:bootArho} and \eqref{ineq:bootLinftArho}, which also lays the groundwork for \S\ref{sec:highMoments} below. 

\subsection{Properties of $S$}
We outline a few properties of $S(t,k)$, which essentially plays the role of a diffusion semigroup. 
\begin{lemma}[Properties of $S$] \label{lem:propS}
The following holds for all $t \geq 0$ and $\nu$ sufficiently small (depending only on universal constants): 
\begin{itemize}
\item[(a)] $S(t,k)$ is strictly decreasing and there exists a number $\delta_0> $ such that
\begin{align}
0 < S(t,k) < \exp\left(-\delta_0 \min(\nu k^2 t^3,\nu^{-1}k^2 t)\right). \label{ineq:Sdecay}
\end{align}
\item[(b)] For all $p \in (0,1]$ and all $\delta$ chosen sufficiently small (depending on $p$), there holds,
\begin{align}
e^{\delta \nu^{1/3}t} S^p(t-\tau,k) \lesssim_{p,\delta} e^{\delta \nu^{1/3}\tau}. 
\end{align}
\end{itemize}
\end{lemma}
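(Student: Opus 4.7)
The plan is to reduce everything to elementary estimates on the one-variable function $g(u) = (1-e^{-\nu u})/\nu$, after a change of variables that makes the integrand independent of the endpoints.

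First, I would substitute $u = t - s$ in the defining integral to write
\[
S(t,k) = \exp\Bigl[-\nu k^2 \int_0^t g(u)^2\,du\Bigr], \qquad g(u) = \frac{1-e^{-\nu u}}{\nu}.
\]
Strict monotonicity in $t$ is then immediate since the integrand is positive. For the decay bound in (a) I would split based on whether $\nu u \leq 1$ or $\nu u \geq 1$. In the first regime, the elementary inequality $1 - e^{-x} \geq x/2$ for $x \in [0,1]$ gives $g(u) \geq u/2$, so
\[
\nu k^2 \int_0^{\min(t,1/\nu)} g(u)^2\,du \;\gtrsim\; \nu k^2 \min(t,1/\nu)^3.
\]
In the second regime, $1 - e^{-\nu u} \geq 1-e^{-1}$, giving $g(u) \gtrsim 1/\nu$ and hence, when $\nu t \geq 1$,
\[
\nu k^2 \int_{1/\nu}^{t} g(u)^2\,du \;\gtrsim\; \frac{k^2 t}{\nu}.
\]
Combining the two regimes yields an exponent bounded below by $\delta_0 \min(\nu k^2 t^3,\,\nu^{-1} k^2 t)$, which is \eqref{ineq:Sdecay}.

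For (b), set $\tau' := t - \tau \geq 0$; since $|k| \geq 1$ we may replace $k^2$ by $1$ in the lower bound from (a). I would split $\tau' \in [0,\infty)$ into three zones:
\begin{itemize}
\item[(i)] $\tau' \leq \nu^{-1/3}$: then $e^{\delta \nu^{1/3}\tau'} \leq e^{\delta}$ is an absolute constant, and one simply discards $S^p$.
\item[(ii)] $\nu^{-1/3} \leq \tau' \leq \nu^{-1}$: the minimum in part (a) equals $\nu \tau'^3$, so one needs $p\delta_0 \nu \tau'^3 \geq \delta \nu^{1/3}\tau'$, i.e.\ $\tau' \geq (\delta/p\delta_0)^{1/2}\nu^{-1/3}$. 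Choosing $\delta \leq p\delta_0$ makes this hold throughout zone (ii).
\item[(iii)] $\tau' \geq \nu^{-1}$: the minimum equals $\tau'/\nu$, so one needs $p\delta_0/\nu \geq \delta \nu^{1/3}$, i.e.\ $\delta\nu^{4/3} \leq p\delta_0$, which holds for $\nu$ small depending on $\delta, p$ (and absorbed into $\lesssim_{p,\delta}$).
\end{itemize}

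There is no serious obstacle: both parts reduce to uniform estimates on $g(u)$ and a case split by the three natural scales $\nu^{-1/3}$, $\nu^{-1}$ in $\tau'$. The only subtlety is in part (b), where the choice of $\delta$ must be made small relative to $p \delta_0$ so that the linear growth rate $\delta\nu^{1/3}\tau'$ is dominated by the dissipative rate $p\delta_0 \nu \tau'^3$ precisely on the intermediate window where the minimum in (a) transitions between its two branches.
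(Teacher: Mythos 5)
Your proof is correct and follows essentially the same strategy as the paper: both arguments reduce to the sign of the integrand for monotonicity, split the exponent estimate in (a) at the timescale $\nu^{-1}$, and prove (b) by a case split at the scales $\nu^{-1/3}$ and $\nu^{-1}$ with the same choice $\delta \lesssim p\,\delta_0$. The only cosmetic difference is that the paper computes the closed-form antiderivative of $g(u)^2$ and Taylor-expands it, whereas you bound $g(u)$ pointwise under the integral, which is marginally more elementary but equivalent.
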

\begin{remark}
Lemma \ref{lem:propS} not only implies the enhanced decay of hydrodynamic quantities due to the interaction of mixing and collisions, but also a kind of hypoelliptic smoothing effect which gains regularity in $x$. This smoothing will be used at several points in the proofs that follow.  
\end{remark}
\begin{proof}
Consider first part (a).
Recall, from \eqref{def:S}
\begin{align}
S(t,k) & = \exp\left[-\nu^{-1} k^2 \left(t + 2\frac{e^{-\nu t}-1}{\nu} - \frac{e^{-2\nu t}-1}{2\nu}\right)\right], \label{ineq:Sexp}
\end{align}
and notice that the derivative of the time-dependent factor is: 
\begin{align*}
\frac{d}{dt}\left(t + 2\frac{e^{-\nu t}-1}{\nu} - \frac{e^{-2\nu t}-1}{2\nu}\right) = 1 - 2e^{-\nu t} + e^{-2\nu t} =\left(1-e^{-\nu t}\right)^2, 
\end{align*}
which implies that $S$ is strictly decreasing for $t > 0$. 
For $\nu t < 1$, the second half of (a) follows from Taylor expansion of the exponentials in \eqref{ineq:Sexp}.
For $\nu t \geq 2$, we have
\begin{align*}
\nu^{-1} k^2 \left(t + 2\frac{e^{-\nu t}-1}{\nu} - \frac{e^{-2\nu t}-1}{2\nu}\right) \geq \nu^{-1}k^2\left(t - \frac{3}{2\nu}\right), 
\end{align*}
which proves (a) in this case. In the intermediate $1 \leq \nu t \leq 2$ case, the result follows as the exponent is strictly decreasing and both behaviors are comparable in this case.

Next, consider part (b).
In the regime $t-\tau \leq \nu^{-1}$, there holds
\begin{align*}
e^{\delta \nu^{1/3} (t-\tau)} S^p(t-\tau,k) \lesssim e^{\delta \nu^{1/3}(t-\tau) - p\delta_0 k^2 \nu (t-\tau)^3}; 
\end{align*}
if $\nu^{1/3}(t-\tau) \leq 1$ then this quantity is $\approx 1$, whereas if $\nu^{1/3}(t-\tau) \geq 1$, the result follows for $\nu$ small. If $t-\tau \geq \nu^{-1}$ then the follows from \eqref{ineq:Sdecay}. 
\end{proof} 

\subsection{Linearized Landau damping of the density with collisions} \label{sec:LinLD}
The first step in obtaining density estimates is a proper treatment of the linearized Volterra equation in \eqref{eq:rhoDef} for a (currently abstract) source: 
\begin{align}
\hat{\rho}(t,k) = \mathcal{F}(t,k) - \int_0^t S(t-\tau,k)\widehat{\rho}(\tau,k) \hat{W} \abs{k}^2 \left(\frac{1 - e^{-\nu(t-\tau)}}{\nu}\right) \widehat{\mu}\left(k \left(\frac{1 - e^{-\nu(t-\tau)}}{\nu}\right)\right) \dd \tau. \label{def:linVolt}
\end{align}
First, pre-multiply \eqref{def:linVolt} by the exponential factor: 
\begin{align}
e^{\delta \nu^{1/3}t}\hat{\rho}(t,k) & = e^{\delta \nu^{1/3}t} \mathcal{F}(t,k) + \int_0^t K^0(t-\tau,k) e^{\delta \nu^{1/3} \tau} \rho(\tau,k) \dd \tau, \label{eq:linVoltprem}
\end{align}
where the integral kernel is defined via
\begin{align*}
K^0(t,k) := e^{\delta\nu^{1/3}t} S(t,k)  \widehat{W}(k) \abs{k}^2 \left(\frac{1 - e^{-\nu t}}{\nu}\right) \widehat{\mu}\left(k \left(\frac{1 - e^{-\nu t}}{\nu}\right)\right) \mathbf{1}_{t \geq 0}. 
\end{align*}
The Volterra equation \eqref{eq:linVoltprem} is then studied using techniques similar to \cite{MouhotVillani11,BMM13,BMM16} and the references. 
The results are similar to results of \cite{Tristani2016}, however, here we gain the collisional decay factor $e^{-\delta \nu^{1/3}t}$. 
\begin{lemma}[Linearized Landau damping] \label{lem:LinLandau}
For $1 \gg \delta > 0$ and all $\nu$ sufficiently small (depending only on universal constants), we have
\begin{align*}
\norm{\abs{\grad_x}^{1/2} \rho}_{L^2_t(I;\cM^{\sigma,\delta})} & \lesssim_{\sigma} \norm{\abs{\grad_x}^{1/2}  \mathcal{F}}_{L^2_t(I;\cM^{\sigma,\delta})}. 
\end{align*}
\end{lemma}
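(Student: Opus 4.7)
The plan is to bring the norm inside the Volterra equation \eqref{eq:linVoltprem} and then treat the resulting weighted integral equation by time-Fourier methods in the spirit of \cite{MouhotVillani11,BMM13,BMM16,Tristani2016}, using Lemma \ref{lem:propS} for the extra collisional factor. First, I would pre-multiply \eqref{eq:linVoltprem} by $|k|^{1/2}\langle k,kt\rangle^\sigma M(t,k,kt)$ and set
\[
R(t,k) := e^{\delta\nu^{1/3}t}|k|^{1/2}\langle k,kt\rangle^\sigma M(t,k,kt)\,\hat{\rho}(t,k),\qquad F(t,k) := e^{\delta\nu^{1/3}t}|k|^{1/2}\langle k,kt\rangle^\sigma M(t,k,kt)\,\hat{\mathcal{F}}(t,k),
\]
extended by zero for $t<0$, so that $\|R\|_{L^2_t\ell^2_k}\approx \||\grad_x|^{1/2}\rho\|_{L^2_t\cM^{\sigma,\delta}}$ and similarly for $F$. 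Inserting the ratios of weights into \eqref{eq:linVoltprem} gives
\[
R(t,k) = F(t,k) + \int_0^t \widetilde{K}(t,\tau,k)\,R(\tau,k)\,d\tau,
\]
with kernel
\[
\widetilde{K}(t,\tau,k) = \frac{\langle k,kt\rangle^\sigma M(t,k,kt)}{\langle k,k\tau\rangle^\sigma M(\tau,k,k\tau)}\,e^{\delta\nu^{1/3}(t-\tau)}S(t-\tau,k)\,\widehat{W}(k)|k|^2\Bigl(\tfrac{1-e^{-\nu(t-\tau)}}{\nu}\Bigr)\widehat{\mu}\Bigl(k\tfrac{1-e^{-\nu(t-\tau)}}{\nu}\Bigr).
\]

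The second step is to dominate $\widetilde{K}$ by a convolution kernel $K^*(t-\tau,k)$. By monotonicity of $M$, the ratio of $M$'s is $\leq 1$, and $\langle k,kt\rangle^\sigma/\langle k,k\tau\rangle^\sigma\lesssim\langle t-\tau\rangle^\sigma$. For $\nu(t-\tau)\leq 1$, the argument of $\widehat{\mu}$ is $\gtrsim |k|(t-\tau)$, so the Gaussian decay of $\widehat{\mu}$ absorbs the $\langle t-\tau\rangle^\sigma$ loss with room to spare, and the prefactor $e^{\delta\nu^{1/3}(t-\tau)}S(t-\tau,k)\lesssim 1$ by Lemma \ref{lem:propS}(b). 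For $\nu(t-\tau)>1$, Lemma \ref{lem:propS}(a) gives $S(t-\tau,k)\leq e^{-\delta_0 \nu^{-1}|k|^2(t-\tau)}$ which crushes all the weight losses uniformly in $\nu$ small. In both cases one obtains $|\widetilde{K}(t,\tau,k)|\leq K^*(t-\tau,k)$ with $K^*$ a convolution kernel whose time-integral is uniformly bounded.

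The third step is to pass to the time-Fourier side. Extending everything to $\mathbb{R}_t$ (using $R,F\equiv 0$ on $t<0$), the dominating convolution equation gives, after Plancherel, a pointwise estimate in $(\omega,k)$ provided $|1-\widetilde{K^*}(\omega,k)|\geq c_0>0$ uniformly in $\omega\in\mathbb{R}$ and $k\in\mathbb{Z}^n_*$. This is the Penrose stability condition. In the collisionless limit $\nu=0$ and $\delta=0$, $\widetilde{K^*}$ reduces to the classical Landau kernel built from the centered Maxwellian $\mu$, and the hypothesis $\widehat{W}(k)\geq 0$ (together with $\widehat{\mu}\geq 0$) gives the classical Penrose lower bound as in \cite{MouhotVillani11}. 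For $\nu$ and $\delta$ small, $\widetilde{K^*}$ is a small perturbation of this kernel in $C^0_{\omega,k}$ (the $e^{-\nu t}$ inside $\widehat{\mu}$ differs from the identity by $O(\nu t)$ on the window $\nu t\leq 1$, which is where the kernel lives), so the lower bound survives. Plancherel then yields $\|R\|_{L^2_{t,k}}\lesssim \|F\|_{L^2_{t,k}}$, which is the desired estimate.

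The principal obstacle is verifying uniform-in-$\nu$ persistence of the Penrose bound after the weight ratios and the $e^{\delta\nu^{1/3}t}$ contour shift are included; this is handled by splitting at $\nu(t-\tau)=1$, treating the near-diagonal regime as an $O(\nu)$ analytic perturbation of the classical collisionless symbol, and using the $S$-factor from Lemma \ref{lem:propS}(a) to make the far-diagonal contribution arbitrarily small. The polynomial loss $\langle t-\tau\rangle^\sigma$ that comes from the regularity ratio is never dangerous because it is always dominated either by the Gaussian decay of $\widehat{\mu}$ or by the exponential decay of $S$.
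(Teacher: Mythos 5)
The paper's proof keeps the Volterra kernel as a genuine \emph{convolution} kernel: it sets $\Phi(t,k)=e^{\delta\nu^{1/3}t}\hat\rho(t,k)$ (no polynomial weight and no $M$ inside), passes to the time-Fourier side of $\Phi=\tilde{\mathcal F}+K^0\ast\Phi$, uses Lemma \ref{lem:K0stab} (Penrose for the actual $K^0$) and Lemma \ref{lem:K0bd}, and then \emph{extracts} the weight $\langle k,kt\rangle^\sigma$ by repeatedly applying $(|k|\partial_\omega)^\alpha$ as in \eqref{eq:Philin}. Your proposal instead absorbs $\langle k,kt\rangle^\sigma M(t,k,kt)$ directly into the definition of $R$ and $F$. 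That destroys translation-invariance in $t$: the resulting kernel $\widetilde K(t,\tau,k)$ contains the ratio $\langle k,kt\rangle^\sigma M(t,k,kt)/\langle k,k\tau\rangle^\sigma M(\tau,k,k\tau)$ and is not a convolution kernel. You then dominate $|\widetilde K|\le K^*(t-\tau,k)$ and try to run the Fourier/Penrose argument on the majorant $K^*$. This is where the argument has a genuine gap.

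The Penrose lower bound $\inf_\omega|1-\widehat{K^*}(\omega,k)|\ge c_0>0$ is a statement about the \emph{particular} kernel, not something that passes to a majorant. Your $K^*$ differs from $K^0$ by the factor $\langle t-\tau\rangle^\sigma$ (in fact $\langle k(t-\tau)\rangle^\sigma$, since $\langle k,kt\rangle/\langle k,k\tau\rangle\lesssim\langle k(t-\tau)\rangle$, not $\langle t-\tau\rangle$), which is $O(1)$ in $L^1_t$, not $O(\nu)$ or $O(\delta)$ — the Gaussian in $\hat\mu$ keeps $K^*$ integrable but does not make $\|K^*-K^0\|_{L^1}$ small. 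So $K^*$ is not a small perturbation of the classical collisionless Landau kernel, and your assertion that ``$\widetilde{K^*}$ reduces to the classical Landau kernel'' in the limit $\nu,\delta\to 0$ is not correct: the $\sigma$-weight survives. Concretely, $\widehat{K^*}(0,k)=\|K^*(\cdot,k)\|_{L^1}$ can be large (for small $|k|$ and large $\sigma$ it contains $\int_0^\infty s e^{-s^2/2}\langle s\rangle^\sigma\,\dd s$), so nothing forces $1-\widehat{K^*}$ to stay away from zero; the Penrose condition for $K^*$ would have to be checked separately and there is no reason to expect it. Moreover, even granting a Penrose bound for $K^*$, passing from the non-convolution equation for $R$ to the convolution equation for the majorant requires a positivity comparison of Neumann series and produces a bound on $R^*$ in terms of $\||F|\|$, not a Plancherel identity for $R$ itself; you elide this step.

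The fix (and what the paper does) is to not put the polynomial weight inside: keep $\Phi=e^{\delta\nu^{1/3}t}\hat\rho$, apply the Penrose bound of Lemma \ref{lem:K0stab} to the unweighted convolution kernel $K^0$, and recover the $\langle k,kt\rangle^\sigma$ factor by differentiating \eqref{eq:Laplace} in $\omega$, using $\mathcal{F}_t[(|k|t)^j\Phi]=(i|k|\partial_\omega)^j\hat\Phi$ and Lemma \ref{lem:K0bd} for the kernel derivatives. The multiplier $M$ causes no trouble since $M\approx 1$ by Lemma \ref{lem:propM}(a) and so does not enter the Volterra analysis at all. Your use of Lemma \ref{lem:propS} to absorb the $e^{\delta\nu^{1/3}t}$ factor into a fractional power of $S$ is consistent with the paper, and your observation that the Gaussian decay of $\hat\mu$ controls polynomial losses in $t-\tau$ is also correct; the gap is solely in how the high-regularity weight is treated.
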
 
\begin{proof}
We write the proof in the style of \cite{MouhotVillani11}.  
Denote $\tilde{\mathcal{F}} = e^{\delta \nu^{1/3}t}\mathcal{F}(t) \mathbf{1}_{t \geq 0}$ and $\Phi$ the solution to the Volterra equation (as in \eqref{eq:linVoltprem}), 
\begin{align*}
\Phi(t,k) = \tilde{\mathcal{F}}(t,k) + \int_0^t K^0(t-\tau,k) \Phi(\tau,k) \dd \tau. 
\end{align*}
For $t \in [0,T^\ast]$, by uniqueness there holds that $\Phi(t,k) = e^{\delta \nu^{1/3}t}\rho(t,k)$. 
Next, we extend $\Phi$ by zero for $t < 0$ and formally take the Fourier transform in time:
\begin{align}
\widehat{\Phi}(\omega,k) = \widehat{\tilde{\mathcal{F}}} + \widehat{K^0}(\omega,k) \widehat{\Phi}(\omega,k). \label{eq:Laplace}
\end{align}
Technically, \eqref{eq:Laplace} is not justified, as we have not verified $\Phi$ has sufficient decay in time. However, the argument we are about to make is an a priori estimate which is easily justified by one of several methods (see e.g. \cite{MouhotVillani11,BMM13}), and hence we omit the details for brevity.

In order to get polynomial decay estimates, take $(\abs{k}\partial_{\omega})^\alpha$ derivatives of \eqref{eq:Laplace} as in \cite{BMM16}, giving 
\begin{align}
(\abs{k} \partial_\omega)^\alpha \widehat{\Phi}(\omega,k) = (\abs{k} \partial_\omega)^\alpha \widehat{\tilde{\mathcal{F}}} + \sum_{0 \leq j \leq \alpha }\begin{pmatrix} \alpha \\ j \end{pmatrix} (\abs{k} \partial_\omega)^{\alpha - j} \widehat{K^0}(\omega,k) (\abs{k} \partial_\omega)^j \widehat{\Phi}(\omega,k). \label{eq:Philin}
\end{align} 
The following two lemmas will suffice. 
\begin{lemma}[Uniform linear stability]  \label{lem:K0stab}
For all $\nu > 0$ sufficiently small, there holds 
\begin{align*}
\inf_{z \in \Complex: \textbf{Re}\, z \leq 0} \abs{1- \widehat{K^0}(z,k)} \geq \kappa > 0, 
\end{align*}
where $\kappa$ can be taken independent of $\nu > 0$. 
\end{lemma} 
\begin{remark} 
The $z \in \Complex$ is necessary to justify the a priori estimate on \eqref{eq:Laplace}; see \cite{MouhotVillani11,BMM13}. 
\end{remark}
\begin{proof} 
It is well known that this holds for $\nu = 0$ (see \cite{Landau46,Penrose,MouhotVillani11}).
The result follows by dominated convergence and Lemma \ref{lem:propS}. 
\end{proof}

\begin{lemma} \label{lem:K0bd}
There holds for all $j \geq 0$, 
\begin{align*}
\abs{(\abs{k}\partial_\omega)^j \widehat{K^0}(\omega,k)} & \lesssim_j 1. 
\end{align*}
\end{lemma} 
\begin{proof} 
Taking derivatives gives:
\begin{align*}
(\abs{k}\partial_\omega)^j \widehat{K^0}(\omega,k) & = \int_0^\infty \abs{k}^j (i t)^j e^{-i\omega t} e^{\delta\nu^{1/3}t} S(t,k)  \widehat{W}(k) \abs{k}^2 \left(\frac{1 - e^{-\nu t}}{\nu}\right) \widehat{\mu}\left(k \left(\frac{1 - e^{-\nu t}}{\nu}\right)\right) \dd t,
\end{align*}
and making the change of variables $s = \abs{k}(1-e^{-\nu t})/\nu$ gives the following, (noting that $t \leq e^{\nu t} (1-e^{-\nu t})/\nu$ and using Lemma \ref{lem:propS} again),  
\begin{align*}
\abs{(\abs{k}\partial_\omega)^j \widehat{K^0}(\omega,k)} & \lesssim \int_0^{\abs{k}\nu^{-1}} \abs{\widehat{W}(k) \abs{k} s^{j+1} \widehat{\mu}\left(s\right)} \dd s \lesssim_j 1,  
\end{align*}
where the bound can be taken independent of $\nu$ and $k$ (though not independent of $j$).
\end{proof}

The a priori estimate in Lemma \ref{lem:LinLandau} follows from \eqref{eq:Philin}, Lemmas \ref{lem:K0stab} and \ref{lem:K0bd}, and Plancherel's identity. See \cite{BMM16} for more details in a similar setting and \cite{MouhotVillani11,BMM13} for discussions on how to make the a priori estimate rigorous.   
\end{proof} 

\subsection{Nonlinear preliminaries}

In this section we make the key estimates for dealing with the nonlinear terms arising in the moment estimates. 
These lemmas combine ideas from \cite{BMM13,Bedrossian16} with the corrected critical times due to the collisions and the enhanced hypoelliptic smoothing inherent in $S$ from Lemma \ref{lem:propS}. 
It is in this first lemma where the enhanced collisional relaxation suppresses the plasma echoes; the $\nu^{1/3}$ threshold of \eqref{ineq:threshold} is mainly due to the second term in \eqref{ineq:GenRhointbd1}. 

\begin{lemma} \label{lem:GenCEst}
For $r = r(t,x)$ and $q = q(t,x,v)$ with $q = q_{\neq}$, define for any $\gamma \in \set{0,1,2}$ and $\alpha \geq 0$,  
\begin{align*}
\mathcal{C}(t,k) = \sum_{\ell \in \Integers_\ast^d} \int_{0}^t S(t-\tau,k) e^{\alpha \nu(t-\tau)} \hat{r}(\tau,\ell)\left(\abs{k}\frac{1-e^{-\nu(t-\tau)}}{\nu}\right)^{\gamma}\widehat{q}\left(\tau,k-\ell,k \frac{1-e^{-\nu t}}{\nu}-\ell \frac{1-e^{-\nu \tau}}{\nu}\right) \dd \tau. 
\end{align*}
If $\gamma=0$, then we have the estimate 
\begin{align}
\norm{\abs{\grad}^{1/2}\mathcal{C}}_{L^2_t(I;\cM^{\sigma,\delta})} & \lesssim \norm{r}_{L^\infty_t (I;\cM^{\beta,\delta})} \left(\sup_{\tau \in I} \jap{\tau}^{-p}\norm{q(\tau)}_{F^{\sigma+1/2,\delta_1}_{m}}\right) \nonumber \\ & \quad + \norm{\abs{\grad}^{1/2}r}_{L^2_t(I;\cM^{\sigma,\delta})} \left(\sup_{\tau \in I} \norm{q(\tau)}_{F^{n+2,\delta_1}_{m}}\right). \label{ineq:GenRhointbd1}
\end{align}
If $\gamma = 1$, then we have the estimate
\begin{align}
\norm{\abs{\grad}^{1/2}\mathcal{C}}_{L^2_t(I;\cM^{\sigma,\delta})} & \lesssim \nu^{-1/4}\norm{r}_{L^\infty_t (I;\cM^{\beta,\delta})} \left(\sup_{\tau \in I} \jap{\tau}^{-p}\norm{q(\tau)}_{F^{\sigma+1/2,\delta_1}_{m}}\right) \nonumber \\ & \quad + \nu^{-1/3}\norm{\abs{\grad}^{1/2}r}_{L^2_t(I;\cM^{\sigma,\delta})} \left(\sup_{\tau \in I} \norm{q(\tau)}_{F^{n+3,\delta_1}_{m}}\right). \label{ineq:GenRhointbd1}
\end{align}
If $\gamma = 2$, then we have the following estimate 
\begin{align}
\norm{\abs{\grad}^{1/2}\mathcal{C}}_{L^2_t(I;\cM^{\sigma,\delta})} & \lesssim \nu^{-1/4}\norm{r}_{L^\infty_t (I;\cM^{\beta,\delta})} \left(\int_0^T \jap{\tau}^{-p-2}\left(\norm{q(\tau)}_{\cD^{\sigma+1/2,\delta_1}_{m}}^2 + \norm{q(\tau)}_{F^{\sigma+1/2,\delta_1}_m}^2 \right) \dd \tau \right)^{1/2} \nonumber \\ & \quad + \nu^{-2/3}\norm{\abs{\grad}^{1/2} r}_{L^2_t(I;\cM^{\sigma,\delta})} \left(\sup_{\tau \in I} \norm{q(\tau)}_{F^{n+4,\delta_1}_{m}}\right). \label{ineq:GenRhointbd1}
\end{align}
\end{lemma}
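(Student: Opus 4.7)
The plan is to adapt the Fourier-side bilinear echo estimates of \cite{BMM13,Bedrossian16} to the weakly collisional setting, using the enhanced hypoelliptic decay of $S$ from Lemma \ref{lem:propS} as the tool replacing Gevrey regularity. I would begin by applying the weight $A_{\sigma,\delta}(t,k,kt)$ to $\hat{\mathcal{C}}(t,k)$ and splitting it by a triangle-type inequality between the two frequencies appearing on the right-hand side: $(\tau,\ell,\ell\tau)$ for $\hat r$ and $(\tau,k-\ell,\,k\tfrac{1-e^{-\nu t}}{\nu}-\ell\tfrac{1-e^{-\nu\tau}}{\nu})$ for $\hat q$. The exponential factor $e^{\delta\nu^{1/3}(t-\tau)}$ produced by this split is paired against a fractional power of $S(t-\tau,k)$ using Lemma \ref{lem:propS}(b), and the discrepancy between $M(t,\cdot)$ and $M(\tau,\cdot)$ along the transport flow is absorbed by the ghost dissipation built into $M$ (cf.\ Appendix \S\ref{sec:propDissM}). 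Because $\delta > \delta_1$ there is slack in this step so no exponential decay is wasted.

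Next I would perform a dichotomy on $|\ell|$ versus $|k-\ell|$. In the \emph{transport regime} $|k-\ell|\gtrsim|\ell|$, the high regularity stays on $r$: I bound $\hat q$ pointwise by its low-regularity Sobolev norm $F^{n+2+\gamma,\delta_1}_m$ (using Sobolev embedding into $L^\infty_{x,v}$, which is why the required $q$-regularity shifts $n+2,n+3,n+4$ as $\gamma$ increases to accommodate the extra $v$-derivatives) and apply Cauchy--Schwarz in $\tau$ against $\||\grad|^{1/2}r\|_{L^2_t\cM^{\sigma,\delta}}$. The extra factor $(|k|(t-\tau))^\gamma$ is handled by computing a time integral of the form $\int(|k|(t-\tau))^{2\gamma}S(t-\tau,k)^2\,\mathrm{d}\tau$, which by the sharp decay in Lemma \ref{lem:propS}(a) contributes the announced $\nu^{0}$, $\nu^{-1/3}$, $\nu^{-2/3}$ powers respectively. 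In the complementary \emph{reaction regime} $|k-\ell|\ll|\ell|$, which is the plasma-echo configuration, the high regularity must move to $q$: I use only $\|r\|_{L^\infty_t\cM^{\beta,\delta}}$ on the $r$ factor and bound $\hat q$ using the high norm $F^{\sigma+1/2,\delta_1}_m$ with enough additional $v$-moment budget to absorb the $(t-\tau)^\gamma$ factor as extra velocity derivatives. For $\gamma=2$ one of these derivatives is paid by the dissipation functional $\|q\|_{\cD^{\sigma+1/2,\delta_1}_m}$, which is exactly what produces the $(\|q\|_\cD^2+\|q\|_F^2)^{1/2}$ structure in the bound. The remaining time kernel is controlled via a single factor of $S(t-\tau,k)$ together with the polynomial-in-$\tau^{-p}$ growth allowed in the high norm and a H\"older split that produces the $\nu^{-1/4}$ prefactor.

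The main obstacle is the critical-time analysis in the reaction region. The $q$-velocity argument $k\tfrac{1-e^{-\nu t}}{\nu}-\ell\tfrac{1-e^{-\nu\tau}}{\nu}$ vanishes at a resonance time $\tau_{CT}(t,k,\ell)$ at which the usual phase-mixing decay of $\hat q$ (the mechanism that makes the collisionless theory work in Gevrey class) is transiently undone, and one must rely entirely on the collisional decay encoded in $S$ to control the echo amplification. Making the bookkeeping tight enough to produce exactly $\nu^{-1/3}$ (and not a larger power, which would push the threshold off of $\nu^{1/3}$) requires carefully tracking how the $M$ multiplier cancels the Orr-type amplification near $\tau_{CT}$ and how Lemma \ref{lem:propS}(b) is used to exchange the exponential prefactor $e^{\delta\nu^{1/3}(t-\tau)}$ for the mixing-enhanced decay $S$. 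This is the step where the sharpness of the $\nu^{1/3}$ threshold in Theorem \ref{thm:main} becomes the binding technical constraint, and I expect to spend most of the effort there.
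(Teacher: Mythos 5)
Your overall strategy matches the paper's: pre-multiply by the $\cM^{\sigma,\delta}$ weight, split into a term where the high regularity is placed on $r$ and another where it is placed on $q$, absorb the exponential $e^{\delta\nu^{1/3}(t-\tau)}$ and the polynomial growth in $\nu(t-\tau)$ into a fractional power of $S$ via Lemma~\ref{lem:propS}, bound the low-regularity factor by Sobolev embedding in frequency, and pay for the factor $(\abs{k}(1-e^{-\nu(t-\tau)})/\nu)^\gamma$ partly with the hypoelliptic smoothing encoded in $S$ and (for $\gamma=2$) partly with the dissipation functional $\cD$. Your observation that the $q$-regularity index should shift as $n+2+\gamma$ and that $\gamma=2$ necessarily pulls in $\norm{q}_{\cD}$ is correct.

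However, the frequency dichotomy as you have written it has the regularity assignment reversed, and taken literally the estimate would diverge. In the regime $\abs{k-\ell}\gtrsim\abs{\ell}$ (which you correctly call transport), the frequency of the $q$-factor dominates, so $\brak{k,kt}^\sigma$ must be paid for by the high norm of $q$; if you instead put $q$ in $F^{n+2+\gamma}_m$ and $r$ in $\cM^\sigma$, a factor $\brak{k-\ell,\cdot}^{\sigma-(n+2+\gamma)}$ is left unpaid, since $\sigma$ is far larger than $n+2+\gamma$ and the weight $\brak{\ell,\ell\tau}^\sigma$ that the high $r$-norm can absorb is small in this regime. Symmetrically, in the echo regime $\abs{\ell}\gg\abs{k-\ell}$ the weight transfers to $r$ and must be absorbed by $\norm{\abs{\grad}^{1/2}r}_{L^2_t\cM^{\sigma,\delta}}$, not moved to $q$. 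So the correct pairing is the mirror image of yours: transport gives the $\nu^{-1/4}$ term with high norm on $q$ and $\norm{r}_{L^\infty\cM^{\beta,\delta}}$, and reaction gives the $\nu^{-\gamma/3}$ term with high norm on $r$ and $q$ in $F^{n+2+\gamma}$.

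Two lesser remarks. The paper never performs an indicator-function dichotomy at all: it distributes $\brak{k,kt}^\sigma$ additively using \eqref{ineq:ktdist} into a piece weighted on $r$ (which becomes $\cC_{HL}$) and a piece weighted on $q$ (which becomes $\cC_{LH}$), sidestepping the regime bookkeeping; this is the cleaner route and avoids the pitfall above. Also, the $\nu^{-\gamma/3}$ factors do not come from evaluating $\int(\abs{k}(t-\tau))^{2\gamma}S^2\,\dd\tau$ as you propose (that scaling gives $\nu^{-(2\gamma+1)/6}$ after Cauchy--Schwarz, a slightly worse though still usable power); in the paper the factor $\abs{k}(1-e^{-\nu(t-\tau)})/\nu$ is bounded by $e^{\nu\tau}\brak{\tau}\brak{k-\ell,\cdot}$, and $\brak{\tau}e^{-c\delta_1\nu^{1/3}\tau}\lesssim\nu^{-1/3}$ is then applied once per power of $\gamma$, with the sum in $\ell$ and integral in $\tau$ closed by a Schur test. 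Finally, for the $\cC_{LH}$ piece, after the change of variables $s=\abs{k}(1-e^{-\nu t})/\nu$, the paper's argument relies on the Fourier-side Sobolev trace lemma to recover the $L^2$ norm of $q$ from its restriction to a ray in $\eta$; this step is essential and should be made explicit in any complete write-up.
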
 
\begin{proof}
Observe that, 
\begin{align}
\abs{kt} & \leq \left(\frac{\nu t}{1 - e^{-\nu t}}\right)\left(\abs{\ell \frac{1-e^{-\nu \tau}}{\nu}} + \abs{k\left(\frac{1-e^{-\nu t}}{\nu}\right)- \ell \left(\frac{1-e^{-\nu \tau}}{\nu}\right)} \right) \nonumber \\
& \lesssim \left(\brak{\nu \tau} + \brak{\nu(t-\tau)}\right) \left(\abs{\ell \frac{1-e^{-\nu \tau}}{\nu}}+ \abs{k\left(\frac{1-e^{-\nu t}}{\nu}\right)- \ell \left(\frac{1-e^{-\nu \tau}}{\nu}\right)} \right). \label{ineq:ktdist}
\end{align}
By Lemma \ref{lem:propS}, for $\delta \ll 1$ and $\nu \ll 1$, there holds, 
\begin{align*}
e^{\delta \nu^{1/3} t} S(t-\tau,k) \lesssim e^{\delta \nu^{1/3}\tau} S^{3/4}(t-\tau,k). 
\end{align*}
Moreover, $\brak{\nu (t-\tau)}^\sigma S(t-\tau,k)^{1/4} \lesssim 1$ by Lemma \ref{lem:propS}. 
Therefore, we can divide the corresponding contributions (analogues of the so-called ``reaction'' and ``transport'' terms in \cite{BMM13}):
\begin{align}
\norm{\mathcal{C}}_{L^2_t(I;\cM^{\sigma,\delta})}^2 & \lesssim \mathcal{C}_{HL} + \mathcal{C}_{LH}, \label{def:RT} 
\end{align}
where, 
\begin{align*}
\mathcal{C}_{HL} & = \int_0^{T_\ast} \sum_{k \in \Integers^n_\ast} \left[\int_0^t S^{1/2}(t-\tau,k) \abs{k}^{1/2} \sum_{\ell \in \Integers^n_\ast} \brak{\ell \frac{1-e^{-\nu \tau}}{\nu}}^\sigma e^{\delta \nu^{1/3} \tau} \abs{\hat{r}(\tau,\ell)}\left(\abs{k}\frac{1-e^{-\nu(t-\tau)}}{\nu}\right)^{\gamma}  \right. \\ & \left. \quad\quad \times \brak{\nu \tau}^{\sigma} \abs{\widehat{q}\left(\tau,k-\ell, k\left(\frac{1 - e^{-\nu t}}{\nu}\right) - \ell \left(\frac{1-e^{-\nu \tau}}{\nu}\right) \right)} \dd \tau \right]^2 \dd t \\
\mathcal{C}_{LH} & = \int_0^{T_\ast} \sum_{k \in \Integers^n_\ast} \left[\int_0^t S^{1/2}(t-\tau,k) \abs{k}^{1/2} \sum_{\ell \in \Integers^n_\ast} e^{\delta \nu^{1/3} \tau} \abs{\hat{r}(\tau,\ell)}\left(\abs{k}\frac{1-e^{-\nu(t-\tau)}}{\nu}\right)^{\gamma}   \right. \\ & \left. \quad\quad \times \brak{\nu \tau}^{\sigma} \brak{k-\ell, k\left(\frac{1-e^{-\nu t}}{\nu}\right)- \ell \left(\frac{1-e^{-\nu \tau}}{\nu}\right)}^\sigma\abs{\widehat{q}\left(\tau,k-\ell, k\left(\frac{1 - e^{-\nu t}}{\nu}\right) - \ell \left(\frac{1-e^{-\nu \tau}}{\nu}\right) \right)} \dd \tau \right]^2 \dd t. 
\end{align*}
The case $\gamma = 0$ is an easy variant of the $\gamma = 1$ case and hence we omit the former.

\textbf{Case $\gamma = 1$:} \\ 
First, observe that 
\begin{align}
\abs{k \frac{1-e^{-\nu(t-\tau)}}{\nu}} & = e^{\nu \tau}\abs{k\left(\frac{1 - e^{-\nu t}}{\nu}\right) - \ell \left(\frac{1-e^{-\nu \tau}}{\nu}\right) - (k-\ell) \frac{1-e^{-\nu \tau}}{\nu}} \nonumber \\ 
& \lesssim e^{\nu \tau} \brak{\tau}\brak{k-\ell, k\left(\frac{1 - e^{-\nu t}}{\nu}\right) - \ell \left(\frac{1-e^{-\nu \tau}}{\nu}\right)} \label{ineq:dvCTbd}
\end{align}
Therefore, for all $\delta_1 > 0$, there holds the following, using $\brak{\tau} e^{-\frac{1}{4}\delta_1 \nu^{1/3}t} \lesssim \nu^{-1/3}$: 
\begin{align*}
\cC_{HL} & \lesssim \int_0^{T_\ast} \sum_{k \in \Integers_\ast^n} \left[\int_0^t S^{1/2}(t-\tau,k) \abs{k}^{1/2} \sum_{\ell \in \Integers_\ast^n} \brak{\ell \frac{1-e^{-\nu \tau}}{\nu}}^\sigma e^{\delta \nu^{1/3} \tau} \abs{\hat{r}(\tau,\ell)}  \right. \\ & \left. \quad\quad \times  \brak{k-\ell, k\left(\frac{1 - e^{-\nu t}}{\nu}\right) - \ell \left(\frac{1-e^{-\nu \tau}}{\nu}\right)}
\brak{\tau} e^{\nu \tau} \right. \\ & \left. \quad\quad \times  \abs{\widehat{q}\left(\tau,k-\ell, k\left(\frac{1 - e^{-\nu t}}{\nu}\right) - \ell \left(\frac{1-e^{-\nu \tau}}{\nu}\right) \right)} \dd \tau \right]^2 \dd t \\ 
& \lesssim \nu^{-2/3} \int_0^{T_\ast} \sum_{k \in \Integers_\ast^n} \left[\int_0^t S^{1/4}(t-\tau,k) \sum_{\ell \in \Integers_\ast^n} \abs{\ell}^{1/2} \brak{\ell \frac{1-e^{-\nu \tau}}{\nu}}^\sigma e^{\delta \nu^{1/3} \tau} \abs{\hat{r}(\tau,\ell)} \right. \\ & \left. \quad\quad \times \brak{k-\ell, k\left(\frac{1 - e^{-\nu t}}{\nu}\right) - \ell \left(\frac{1-e^{-\nu \tau}}{\nu}\right)}^{3/2} e^{\frac{1}{4}\delta_1\nu^{1/3}\tau} \right. \\ & \left. \quad\quad \times \abs{\widehat{q}\left(\tau,k-\ell, k\left(\frac{1 - e^{-\nu t}}{\nu}\right) - \ell \left(\frac{1-e^{-\nu \tau}}{\nu}\right) \right)} \dd \tau \right]^2 \dd t. 
\end{align*}
By Sobolev embedding in frequency and \eqref{ineq:FHsj}, for $\zeta > 0$ (using also Lemma \ref{lem:propS}), 
\begin{align*}
\cC_{HL} & \lesssim \nu^{-2/3} \norm{q}_{L^\infty_t F^{\zeta,\delta_1}_{m}}^2
 \int_0^\infty \sum_{k \neq 0} \left[\int_0^t \sum_{\ell \neq 0}  S^{1/4}(t-\tau,k)\abs{\ell}^{1/2} \brak{\ell \frac{1-e^{-\nu \tau}}{\nu}}^\sigma e^{\delta \nu^{1/3} \tau} \abs{\hat{r}(\tau,\ell)} \right. \\ & \left. \quad\quad \times \brak{k-\ell, k\left(\frac{1 - e^{-\nu t}}{\nu}\right) - \ell \left(\frac{1-e^{-\nu \tau}}{\nu}\right)}^{\frac{3}{2}-\zeta} e^{m \nu \tau - \frac{1}{4}\delta_1 \nu^{1/3}\tau } \dd \tau \right]^2 \dd t.
\end{align*}
By Schur's test, 
\begin{align}
\cC_{HL} & \lesssim \nu^{-2/3} \norm{q}_{L^\infty_t F^{\zeta,\delta_1}_{m}}^2 \norm{\abs{\grad}^{1/2}r}_{L^2 \cM^{\sigma,\delta}}^2 \nonumber \\ & \quad\quad \times \left(\sup_{t \in I} \sup_{k \in \Integers^n_\ast} \sum_{\ell \in \Integers^n_\ast}\int_0^t \cK(t,\tau,k,\ell) \dd \tau\right)\left(\sup_{\tau \in I} \sup_{\ell \in \Integers^n_\ast} \sum_{\ell \in \Integers^n_\ast}\int_\tau^T \cK(t,\tau,k,\ell) \dd \tau\right), \label{ineq:Schur}
\end{align}
where 
\begin{align*}
\cK(t,\tau,k,\ell) = S^{1/2}(t-\tau,k)e^{-\frac{1}{5}\delta_1 \nu^{1/3}\tau} \brak{k-\ell, k\left(\frac{1 - e^{-\nu t}}{\nu}\right) - \ell \left(\frac{1-e^{-\nu \tau}}{\nu}\right)}^{\frac{3}{2}-\zeta}.
\end{align*}
By the change of variables $s= \abs{\ell}(1-e^{-\nu \tau})\nu^{-1}$, 
\begin{align*}
\sum_{\ell\in \Integers^n_\ast} \int_0^t \mathcal{K}(t,\tau,k,\ell) \dd \tau  & \lesssim \sum_{\ell \in \Integers_\ast^n} \int_0^t e^{-\frac{1}{6}\delta_1 \nu^{1/3} \tau - \nu \tau}\brak{k-\ell, k\left(\frac{1-e^{-\nu t}}{\nu}\right)- \ell \left(\frac{1-e^{-\nu \tau}}{\nu}\right)}^{\frac{3}{2}-\zeta} \dd \tau \\
& \lesssim \sum_{\ell \in \Integers^n_\ast} \int_{-\infty}^{\infty}  \brak{k-\ell, k\left(\frac{1-e^{-\nu t}}{\nu}\right)- \frac{\ell}{\abs{\ell}}s}^{\frac{3}{2}-\zeta} \dd s \\
& \lesssim 1,  
\end{align*}
provided that $\zeta > n+ \frac{3}{2}$. 
For the other kernel estimate in \eqref{ineq:Schur}, we use the change of variables $s= \abs{k}(1-e^{-\nu t})\nu^{-1}$ and Lemma \ref{lem:propS} to deduce $S^{1/4}(t-\tau,k) \lesssim e^{-\nu (t-\tau)}$, which implies 
\begin{align*}
\sum_{k \in \Integers^n_\ast} \int_\tau^{T^\ast} \mathcal{K}(t,\tau,k,\ell) \dd t  & \lesssim \sum_{k \in \Integers_\ast^n} \int_\tau^{T^\ast} S^{1/4}(t-\tau,k) e^{-\frac{1}{6}\delta_1 \nu^{1/3} \tau - \nu \tau}\brak{k-\ell, k\left(\frac{1-e^{-\nu t}}{\nu}\right)- \ell \left(\frac{1-e^{-\nu \tau}}{\nu}\right)}^{\frac{3}{2}-\zeta} \dd t  \\
& \lesssim \sum_{\ell \in \Integers^n_\ast} \int_{-\infty}^{\infty} \brak{k-\ell, \frac{k}{\abs{k}}s -  \abs{\ell}(1-e^{-\nu \tau})\nu^{-1}}^{\frac{3}{2}-\zeta} \dd s \\
& \lesssim 1.  
\end{align*}
This completes the treatment of $\cC_{HL}$. 

Turn next to $\cC_{LH}$. 
For this term we use the hypoelliptic smoothing from Lemma \ref{lem:propS}:
\begin{align}
S^{1/4}(t-\tau,k)\abs{k}\left( \frac{1 - e^{-\nu(t-\tau)}}{\nu}\right) & \lesssim \abs{k}^{1/2}\left( \frac{1 - e^{-\nu(t-\tau)}}{\nu}\right)^{1/2} \frac{1}{\nu^{1/4}(t-\tau)^{1/4}}.   
\end{align}
Hence, by \eqref{ineq:dvCTbd}, we have 
\begin{align*}
\cC_{LH} & \lesssim \nu^{-1/2}\int_0^{T_\ast} \sum_{k \in \Integers_\ast^n } \left[\int_0^t \sum_{\ell \in \Integers_\ast^n} e^{\delta \nu^{1/3} \tau} \abs{\hat{r}(\tau,\ell)} \frac{e^{\nu \tau}\brak{\tau}^{1/2}}{(t-\tau)^{1/4}} S^{1/2}(t-\tau,k)  \right. \\ & \left. \quad\quad \times \brak{\nu \tau}^{\sigma} \abs{k}^{1/2} \brak{k-\ell, k\left(\frac{1-e^{-\nu t}}{\nu}\right)- \ell \left(\frac{1-e^{-\nu \tau}}{\nu}\right)}^{\sigma+1/2} \right. \\ & \left. \quad\quad \times \abs{\widehat{q}\left(\tau,k-\ell, k\left(\frac{1 - e^{-\nu t}}{\nu}\right) - \ell \left(\frac{1-e^{-\nu \tau}}{\nu}\right) \right)} \dd \tau \right]^2 \dd t. 
\end{align*}
From here, we apply an argument analogous to one used in \cite{BMM13}. 
By Cauchy-Schwarz and $\nu \ll 1$, 
\begin{align*}
\cC_{LH} & \lesssim \nu^{-1/2} \left(\sup_{t\in I}\int_0^t \sum_{\ell \in \Integers_\ast^n} e^{\delta \nu^{1/3} \tau} \abs{\hat{r}(\tau,\ell)} \frac{\brak{\tau}^{1/2+p/2}}{(t-\tau)^{1/2}} \dd \tau\right) \\ 
& \quad \times \sum_{k \in \Integers_\ast^n} \int_0^{T_\ast}  \int_0^t \left(\sum_{\ell \in \Integers_\ast^n}e^{\delta \nu^{1/3} \tau} \abs{\hat{r}(\tau,\ell)} \brak{\tau}^{1/2+p/2} \right)S(t-\tau,k) \\ 
& \quad\quad \times  \abs{k}e^{\frac{1}{2}\delta_1\nu^{1/3}\tau} \brak{\tau}^{-p} \abs{\widehat{\brak{\grad}^{\sigma+1/2}q}\left(\tau,k-\ell, k\left(\frac{1 - e^{-\nu t}}{\nu}\right) - \ell \left(\frac{1-e^{-\nu \tau}}{\nu}\right) \right)}^2 \dd \tau  \dd t.
\end{align*}
Next, make the change of variables 
\begin{align*}
\abs{k}\frac{1-e^{-\nu t}}{\nu} = s
\end{align*}
and note that by Lemma \ref{lem:propS}, $S(t-\tau,k) e^{\nu t} \lesssim e^{\nu \tau}$, to deduce (for $\beta \geq p+\frac{1}{2}$ and $t(s) = -\frac{1}{\nu}\log(1+ \frac{s\nu}{\abs{k}})$), 
\begin{align*}
\cC_{LH} & \lesssim \nu^{-1/2} \norm{r}_{L^\infty_t \cM^{\beta,\delta}} \\ 
& \quad \times \sum_{k \neq 0} \int_0^{\abs{k}\frac{1-e^{-\nu T_\ast}}{\nu}}  \int_0^{t(s)} \left(\sum_{\ell \neq 0}e^{\delta \nu^{1/3} \tau} \abs{\hat{r}(\tau,\ell)} \brak{\tau}^{1/2+p} \right) \\ 
& \quad\quad \times e^{\frac{3}{4}\delta_1\nu^{1/3}\tau} \brak{\tau}^{-2p} \abs{\widehat{\brak{\grad}^{\sigma+1/2}q}\left(\tau,k-\ell, \frac{k}{\abs{k}}s - \ell \left(\frac{1-e^{-\nu \tau}}{\nu}\right) \right)}^2 \dd \tau  \dd s \\ 
& \lesssim \nu^{-1/2} \norm{r}_{L^\infty_t \cM^{\beta,\delta}}^2 \\ 
& \quad \times \sum_{k \neq 0} \int_0^{\abs{k}\frac{1-e^{-\nu T_\ast}}{\nu}}  \int_0^{t(s)} \sum_{\ell \neq 0}\frac{1}{\brak{\ell}^{n+1}}  \\ 
& \quad\quad \times e^{\frac{3}{4}\delta_1\nu^{1/3}\tau} \brak{\tau}^{-2p} \abs{\widehat{\brak{\grad}^{\sigma+1/2}q}\left(\tau,k-\ell, \frac{k}{\abs{k}}s - \ell \left(\frac{1-e^{-\nu \tau}}{\nu}\right) \right)}^2 \dd \tau  \dd s \\ 
& \lesssim \nu^{-1/2} \norm{r}_{L^\infty_t \cM^{\beta,\delta}}^2 \int_0^{T_\ast} \sum_{\ell \neq 0}\frac{1}{\brak{\ell}^{n+1}} \sum_{k \neq 0} e^{\frac{3}{4}\delta_1\nu^{1/3}\tau} \brak{\tau}^{-2p} \\ & \quad\quad \times \int_{-\infty}^\infty \abs{\widehat{\brak{\grad}^{\sigma+1/2}q}\left(\tau,k-\ell, \frac{k}{\abs{k}}s - \ell \left(\frac{1-e^{-\nu \tau}}{\nu}\right) \right)}^2 \dd s \dd \tau \\ 
& \lesssim \nu^{-1/2} \norm{r}_{L^\infty_t \cM^{\beta,\delta}}^2 \int_0^{T_\ast} \sum_{\ell \neq 0}\frac{1}{\brak{\ell}^{n+1}} \sum_{k \neq 0} e^{\frac{3}{4}\delta_1\nu^{1/3}\tau} \brak{\tau}^{-2p} \\ & \quad\quad \times \sup_{x,\omega} \int_{-\infty}^\infty \abs{\widehat{\brak{\grad}^{\sigma+1/2}q}\left(\tau,k-\ell, \omega s - x\right)}^2 \dd s \dd \tau, \\ 
& \lesssim \nu^{-1/2} \norm{r}_{L^\infty_t \cM^{\beta,\delta}}^2 \left(\sup_{\tau \in I} \brak{\tau}^{-p} \norm{q(\tau)}_{F^{\sigma+1/2,\delta_1}_m}^2\right),  
\end{align*}  
where the last line followed by the Sobolev trace lemma applied in Fourier (see e.g. \cite{BMM13}).  

\textbf{Case $\gamma = 2$:} \\  
The $\cC_{HL}$ case is treated similarly to the $\gamma = 1$ only noting that there is an extra power of $t$. 
The main difference is the treatment of the $\cC_{LH}$ term. 
We begin with the same hypoellipticity, 
\begin{align}
S^{1/3}(t-\tau,k) \abs{k}^2 \left(\frac{1-e^{-\nu(t-\tau)}}{\nu}\right)^2 & \lesssim  \frac{1}{\nu^{-1/4}(t-\tau)^{1/4}} \abs{k}^{3/2} \left(\frac{1-e^{-\nu(t-\tau)}}{\nu}\right)^{3/2}, \label{ineq:hypoelip2}
\end{align}
to deduce 
\begin{align*}
\cC_{LH} & \lesssim \nu^{-1/2} \left(\sup_{t\in I}\int_0^t \sum_{\ell \neq 0} e^{\delta \nu^{1/3} \tau} \abs{\hat{r}(\tau,\ell)} \frac{\brak{\tau}^{1/2+p/2}}{(t-\tau)^{1/2}} \dd \tau\right) \\ 
& \quad \times \sum_{k \in \Integers_\ast^n} \int_0^{T_\ast}  \int_0^t \left(\sum_{\ell \in \Integers_\ast^n}e^{\delta \nu^{1/3} \tau} \abs{\hat{r}(\tau,\ell)} \brak{\tau}^{1/2+p/2} \right)S(t-\tau,k) \\ 
& \quad\quad \times  \abs{k}e^{\frac{1}{2}\delta_1\nu^{1/3}\tau} \brak{\tau}^{-p} \abs{\partial_v^t \widehat{\brak{\grad}^{\sigma+1/2}q}\left(\tau,k-\ell, k\left(\frac{1 - e^{-\nu t}}{\nu}\right) - \ell \left(\frac{1-e^{-\nu \tau}}{\nu}\right) \right)}^2 \dd \tau  \dd t \\ 
& \lesssim \nu^{-1/2}\norm{r}_{L^\infty_t \cM^{\beta,\delta}}^2 \sum_{k \in \Integers_\ast^n} \int_0^{T_\ast}  \int_0^t \sum_{\ell \in \Integers_\ast^n}\frac{\abs{k}}{\brak{\tau}^{2}\brak{\ell}^{n+1}}e^{\frac{1}{2}\delta_1\nu^{1/3}\tau} \brak{\tau}^{-p} \\ & \quad\quad \times \abs{\partial_v^t \widehat{\brak{\grad}^{\sigma+1/2}q}\left(\tau,k-\ell, k\left(\frac{1 - e^{-\nu t}}{\nu}\right) - \ell \left(\frac{1-e^{-\nu \tau}}{\nu}\right) \right)}^2 \dd \tau  \dd t. 
\end{align*}
Make the change of variables $s = \abs{k}\frac{1-e^{-\nu t}}{\nu}$ to deduce, 
\begin{align*}
\cC_{HL} & \lesssim \nu^{-1/2}\norm{r}_{L^\infty_t \cM^{\beta,\delta}}^2 \sum_{k \in \Integers_\ast^n} \int_0^{\abs{k}\nu^{-1}}  \int_0^t \sum_{\ell \in \Integers_\ast^n}\frac{1}{\brak{\tau}^{2}\brak{\ell}^{n+1}}e^{\frac{3}{4}\delta_1\nu^{1/3}\tau} \brak{\tau}^{-p} \\ & \quad\quad \times \abs{\partial_v^t \widehat{\brak{\grad}^{\sigma+1/2}q}\left(\tau,k-\ell, \frac{k}{\abs{k}}s - \ell \left(\frac{1-e^{-\nu \tau}}{\nu}\right) \right)}^2 \dd \tau  \dd s \\ 
& \lesssim \nu^{-1/2}\norm{r}_{L^\infty_t \cM^{\beta,\delta}}^2 \int_0^t \sum_{\ell \in \Integers_\ast^n}\frac{1}{\brak{\tau}^{2}\brak{\ell}^{n+1}}e^{\frac{3}{4}\delta_1\nu^{1/3}\tau} \brak{\tau}^{-p} \\ & \quad\quad \times \sum_{k \in \Integers_\ast^n} \int_{-\infty}^{\infty} \abs{\partial_v^t \widehat{\brak{\grad}^{\sigma+1/2}q}\left(\tau,k-\ell, \frac{k}{\abs{k}}s - \ell \left(\frac{1-e^{-\nu \tau}}{\nu}\right) \right)}^2 \dd s \dd \tau \\ 
 & \lesssim \nu^{-1/2}\norm{r}_{L^\infty_t \cM^{\beta,\delta}}^2 \int_0^t\frac{1}{\brak{\tau}^{2}}e^{\frac{3}{4}\delta_1\nu^{1/3}\tau} \brak{\tau}^{-p} \\ & \quad\quad \times \sup_{x,\omega}\sum_{k \in \Integers_\ast^n} \int_{-\infty}^{\infty} \abs{\partial_v^t \widehat{\brak{\grad}^{\sigma+1/2}q}\left(\tau,k, \omega s - x\right)}^2 \dd s \dd \tau. 
\end{align*}
By the Sobolev trace lemma and \eqref{ineq:cDHsj}, it follows that 
\begin{align*}
\cC_{HL} & \lesssim  \nu^{-1/2}\norm{r}_{L^\infty_t \cM^{\beta,\delta}}^2 \int_0^{T_\ast} \brak{\tau}^{-p-2} e^{\frac{3}{4}\delta_1\nu^{1/3} \tau} \norm{\brak{v}^{n/2+}\partial_v^t \brak{\grad}^{\sigma+1/2}q(\tau)}_2^2 \dd \tau \\ 
& \lesssim \nu^{-1/2}\norm{r}_{L^\infty_t \cM^{\beta,\delta}}^2 \int_0^{T_\ast} \brak{\tau}^{-p-2} \norm{q(\tau)}^2_{\cD^{\sigma+1/2,\delta_1}_m} + \brak{\tau}^{-p-2}\norm{q(\tau)}^2_{F^{\sigma+1/2,\delta_1}_m} \dd \tau, 
\end{align*}
which completes the proof. 
\end{proof} 

The next lemma is the space-homogeneous variant of the previous lemma. 
\begin{lemma} \label{lem:GenCEst0mode}
For $r = r(t,x)$ and $q_0 = q_0(t,v)$, define for $\gamma \in \set{0,1,2}$ and $\alpha \geq 0$,  
\begin{align*}
\mathcal{C}(t,k) = \sum_{k \in \Integers_\ast^d}\int_{0}^t S(t-\tau,k) e^{\alpha \nu(t-\tau)} \hat{r}(\tau,k)\left(\abs{k}\frac{1-e^{-\nu(t-\tau)}}{\nu}\right)^{\gamma}\widehat{q}_0\left(\tau, k \left(\frac{1-e^{-\nu(t-\tau)}}{\nu}\right) \right) \dd \tau. 
\end{align*}
Then, 
\begin{align}
\norm{\abs{\grad}^{1/2}\mathcal{C}}_{L^2_t(I;\cM^{\sigma,\delta})} & \lesssim \nu^{-\gamma/3}\norm{r}_{L^\infty_t (I;\cM^{\beta,\delta})}\left(\sup_{t\in I} \brak{t}^{-\beta/2}\norm{q_0(t)}_{H^{\sigma}_m}\right) \nonumber \\ & \quad + \norm{\abs{\grad}^{1/2}r}_{L^2_t(I;\cM^{\sigma,\delta})} \norm{q_0}_{L^\infty_t H^{n+\gamma+2}_{m}}
\end{align}
\end{lemma}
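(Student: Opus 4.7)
The strategy mirrors the proof of Lemma \ref{lem:GenCEst}, and is in fact simpler because $q_0$ depends only on $v$: there is no sum over $\ell\in\Integers^n_\ast$, so $\hat{q}_0$ is evaluated along the single ray $k(1-e^{-\nu(t-\tau)})/\nu$ paired with the matching mode $\hat r(\tau,k)$, and the convolution is purely in time. The loss $\nu^{-\gamma/3}$ in the first term is inherited from the hypoelliptic smoothing of $S$.

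First I would split $\mathcal{C} = \mathcal{C}_{HL} + \mathcal{C}_{LH}$ using the analog of \eqref{ineq:ktdist},
$$\abs{kt} \lesssim \brak{\nu\tau}\abs{k\tfrac{1-e^{-\nu\tau}}{\nu}} + \brak{\nu(t-\tau)}\abs{k\tfrac{1-e^{-\nu(t-\tau)}}{\nu}},$$
so that the $\brak{\,\cdot\,}^\sigma$ weight from $\cM^{\sigma,\delta}$ lands either on $\hat r(\tau,k)$ (the HL piece) or on the argument of $\widehat{q_0}$ (the LH piece). In both, the exponential $e^{\delta\nu^{1/3}t}$ and the polynomial $\brak{\nu(t-\tau)}^\sigma$ are absorbed by fractional powers of $S$ as in Lemma \ref{lem:propS}, and I extract the $\abs{k}^{1/2}$ needed to match the $\abs{\grad}^{1/2}$ on the left.

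For the HL piece, the factor $\brak{k(1-e^{-\nu\tau})/\nu}^\sigma e^{\delta\nu^{1/3}\tau}\abs{\hat r(\tau,k)}$ is precisely an element of the $\cM^{\sigma,\delta}$ norm; the prefactor $(\abs{k}(1-e^{-\nu(t-\tau)})/\nu)^\gamma$ is absorbed by the polynomial decay of $\widehat{q_0}$, using $H^{n+\gamma+2}_m$ regularity (the $n+2$ comes from the Sobolev threshold for pointwise evaluation of $\widehat{q_0}$, as in the change of variable $s=\abs{k}(1-e^{-\nu(t-\tau)})/\nu$ in Lemma \ref{lem:GenCEst}). Schur's test on the resulting time kernel, integrated in either $t$ or $\tau$ by the same change of variables, then produces the second term in the claimed bound.

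For the LH piece I place the $\sigma$-regularity on $q_0$ and keep $r$ only in $L^\infty_t \cM^{\beta,\delta}$ (paying a factor $\brak{\tau}^{\beta/2}$ to match the $\brak{\tau}^{-\beta/2}$ supremum on $q_0$). The $\gamma$-factor is handled by the hypoelliptic smoothing
$$S^{p}(t-\tau,k)\,\Bigl(\abs{k}\tfrac{1-e^{-\nu(t-\tau)}}{\nu}\Bigr)^\gamma \lesssim \nu^{-\gamma/3}\,\frac{1}{(\nu(t-\tau))^{\gamma/4}},$$
derived by optimizing against the cubic-in-$t$ exponent in \eqref{ineq:Sdecay}, exactly as in the derivation of \eqref{ineq:hypoelip2}. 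A Cauchy--Schwarz in $\tau$, the change of variables $s=\abs{k}(1-e^{-\nu t})/\nu$, and a one-dimensional Sobolev trace along the ray $\{k(1-e^{-\nu(t-\tau)})/\nu\}$ (much simpler than the full Fourier trace in \cite{BMM13}, since $\widehat{q_0}$ only depends on a single $\eta$-variable) then yield the first term. The main obstacle I anticipate is bookkeeping the hypoelliptic smoothing cleanly to produce exactly $\nu^{-\gamma/3}$ without extra logarithms, which requires interpolating the two regimes in \eqref{ineq:Sdecay} and balancing the $\brak{\nu\tau}^\sigma$ growth against the surplus $S^{1/4}$ left over after the smoothing has been used.
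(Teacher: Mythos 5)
Your overall structure (HL/LH splitting of the $\brak{k,kt}^\sigma$ weight, a Young's/Schur-type convolution estimate for the HL piece, hypoelliptic smoothing plus Cauchy--Schwarz and a change of variables for the LH piece) matches the paper's proof. The HL treatment you describe is essentially what the paper does (the paper uses the simpler split $\brak{k,kt}^\sigma \lesssim \brak{k,k\tau}^\sigma + \brak{\nu(t-\tau)}^\sigma \brak{k\tfrac{1-e^{-\nu(t-\tau)}}{\nu}}^\sigma$ and Young's inequality for the time convolution, but Schur works equally well here).

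The genuine gap is in your LH hypoelliptic bound. You propose
\[
S^{p}(t-\tau,k)\Bigl(\abs{k}\tfrac{1-e^{-\nu(t-\tau)}}{\nu}\Bigr)^\gamma \lesssim \nu^{-\gamma/3}(\nu(t-\tau))^{-\gamma/4},
\]
but this is false for $t-\tau \ll \nu^{1/3}$: taking $\abs{k} \approx \nu^{-1/2}(t-\tau)^{-3/2}$ (so that $\nu k^2 (t-\tau)^3 = O(1)$ and $S^p = O(1)$), the left-hand side is $\approx (\nu(t-\tau))^{-\gamma/2}$, which exceeds your claimed right-hand side by $(\nu^{1/3}/(t-\tau))^{\gamma/4}$. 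Moreover, even with the corrected bound $(\nu(t-\tau))^{-\gamma/2}$, for $\gamma=2$ the resulting kernel $(\nu(t-\tau))^{-1}$ is not integrable near $\tau=t$, and for $\gamma=1$ the unabsorbed $\nu^{-1/2}t^{1/2}$ does not give the claimed $\nu^{-\gamma/3}$. The root cause is that you are folding $\abs{k}^\gamma$ into the hypoelliptic estimate, where it forces a sup over $k$. The paper instead uses the pointwise-in-$k$ bound
\[
S^{1/2}(t-\tau,k)\Bigl(\tfrac{1-e^{-\nu(t-\tau)}}{\nu}\Bigr)^{\gamma} \lesssim \nu^{-\gamma/3},
\]
with no $\abs{k}$ inside and no time-singularity, and it absorbs the leftover $\abs{k}^\gamma$ (together with the $\abs{k}^{1/2}$ for the gradient) into $\norm{r}_{L^\infty_t\cM^{\beta,\delta}}$ using that $\beta$ is chosen large; a second copy of $S^{1/2}$ then survives to make the $\tau$-integral (after the change of variables $s=\abs{k}(1-e^{-\nu(t-\tau)})/\nu$) a clean $L^2$ trace, so no logarithms or singular kernels appear. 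You would need to adopt this separation of $\abs{k}^\gamma$ from the hypoelliptic smoothing to make the LH piece close.
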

\begin{remark} \label{rmk:HsmInterp}
By Sobolev interpolation, for $\sigma=s(1-\zeta) + (\sigma+1/2)\zeta$ (using also \eqref{ineq:FHsj} and \eqref{def:hfrelat}),  
\begin{align*}
\norm{h_0}_{H^\sigma_m} \lesssim \norm{h_0}_{H^{s}_m}^{1-\zeta} \norm{h_0}_{H^{\sigma+1/2}_m}^{\zeta} \lesssim \norm{h_0}_{H^{s}_m}^{1-\zeta} \norm{h_0}_{H^{\sigma+1/2}_m}^{\zeta} \lesssim \norm{h_0}_{H^{s}_m}^{1-\zeta}e^{\zeta (m+\frac{n}{2})\nu t}   \norm{f}_{F^{\sigma+1/2}_m}^{\zeta}. 
\end{align*}
Therefore, if $1-\zeta \geq \zeta(m+\frac{n}{2})$ and $\beta \geq \zeta p/2$, then by the bootstrap hypotheses, there holds
\begin{align}
\sup_{t\in I} \brak{t}^{-\beta/2} \norm{h_0(t)}_{H^\sigma_m} & \lesssim \sup_{t\in I} \norm{h_0(t)}_{H^{s}_m}^{1-\zeta}e^{\zeta (m+\frac{n}{2})\nu t}   \left(\brak{t}^{-\beta/2}\norm{f(t)}_{F^{\sigma+1/2}_m}^{\zeta}\right) \nonumber \\
& \lesssim \sup_{t\in I} e^{-(1-\zeta)\nu t + \zeta(m+\frac{n}{2})\nu t} \eps \nonumber \\
& \lesssim \eps. \label{ineq:h0Hsig}
\end{align}
Using $(\sigma-s)/(\sigma+1/2-s) = \zeta$, it is clear that we may choose $\sigma-s$ depending only on $m$ and $n$ to accomplish this. 
\end{remark}
\begin{proof}
First, observe that 
\begin{align*}
\brak{k,kt}^{\sigma} & \lesssim \brak{k,k\tau}^{\sigma} + \brak{k(t-\tau)}^{\sigma} \lesssim \brak{k,k\tau}^{\sigma} + \brak{\nu (t-\tau)}^{\sigma}\brak{k\frac{1-e^{-\nu(t-\tau)}}{\nu}}^{\sigma}. 
\end{align*}
The  $\brak{\nu(t-\tau)}^\sigma$ is absorbed by $S(t-\tau,k)$ via Lemma \ref{lem:propS}. Hence, as in Lemma \ref{lem:GenCEst}, we write
\begin{align}
\norm{\mathcal{C}}_{L^2_t(I;\cM^{\sigma,\delta})}^2 & \lesssim \mathcal{C}_{HL} + \mathcal{C}_{LH}, \label{def:RTgenest} 
\end{align}
where, 
\begin{align*}
\mathcal{C}_{HL} & = \int_0^{T_\ast} \sum_{k \in \Integers_\ast^n} \left[\int_0^t S^{1/2}(t-\tau,k) \abs{k}^{1/2} \brak{k, k \tau}^\sigma e^{\delta \nu^{1/3} \tau} \abs{\hat{r}(\tau,k)}\left(\abs{k}\frac{1-e^{-\nu(t-\tau)}}{\nu}\right)^{\gamma}  \right. \\ & \left. \quad\quad \times \abs{\widehat{q}_0\left(\tau, k\left(\frac{1 - e^{-\nu (t-\tau)}}{\nu} \right)\right)} \dd \tau \right]^2 \dd t \\
\mathcal{C}_{LH} & = \int_0^{T_\ast} \sum_{k \in \Integers_\ast^n} \left[\int_0^t S^{1/2}(t-\tau,k) \abs{k}^{1/2} e^{\delta \nu^{1/3} \tau} \abs{\hat{r}(\tau,k)}\left(\abs{k}\frac{1-e^{-\nu(t-\tau)}}{\nu}\right)^{\gamma}  \right. \\ & \left. \quad\quad \times \brak{k\frac{1-e^{-\nu(t-\tau)}}{\nu}}^{\sigma} \abs{\widehat{q}_0\left(\tau, k\left(\frac{1 - e^{-\nu (t-\tau)}}{\nu} \right)\right)} \dd \tau \right]^2 \dd t. 
\end{align*}
To treat $\cC_{HL}$, apply Sobolev embedding and  Young's inequality: 
\begin{align*}
\cC_{HL} & \lesssim \norm{\abs{\grad}^{1/2} r}_{L^2_t \cM^{\sigma,\delta}}^2 \norm{q_0}_{L^\infty_t H^{n+\gamma+2}_m}^2 \left(\sup_{t \in I}\sup_{k \in \Integers^n_\ast} \int_0^t S^{1/2}(\tau,k)\brak{k\frac{1 - e^{-\nu \tau}}{\nu}}^{-n-1} \dd \tau \right)^2 \\
& \lesssim \norm{\abs{\grad}^{1/2} r}_{L^2_t \cM^{\sigma,\delta}}^2 \norm{q_0}_{L^\infty_t H^{n + \gamma +2}_m}^2,
\end{align*}
where the last line followed by the change of variables $s = \abs{k} \frac{1-e^{-\nu \tau}}{\nu}$ and Lemma \ref{lem:propS}. 

To treat the $\cC_{LH}$ term we use Lemma \ref{lem:propS} to deduce 
\begin{align*}
S^{1/2}(t-\tau,k) \left(\frac{1-e^{-\nu(t-\tau)}}{\nu}\right)^{\gamma} \lesssim \nu^{-\gamma/3}, 
\end{align*}
and hence we deduce 
\begin{align*}
\mathcal{C}_{LH} & \lesssim \nu^{-2\gamma/3}\int_0^{T_\ast} \sum_{k \in \Integers_\ast^n} \left[\int_0^t S^{1/2}(t-\tau,k) \abs{k}^{1/2} e^{\delta \nu^{1/3} \tau} \abs{k}^\gamma \abs{\hat{r}(\tau,k)}\abs{\widehat{\brak{\grad}^{\sigma}q}_0\left(\tau, k\left(\frac{1 - e^{-\nu (t-\tau)}}{\nu} \right)\right)} \dd \tau \right]^2 \dd t \\ 
& \lesssim \nu^{-2\gamma/3} \norm{r}^2_{L^\infty_t \cM^{\beta,\delta}} \int_0^{T_\ast} \sum_{k \in \Integers_\ast^n} \left[\int_0^t S^{1/2}(t-\tau,k) \frac{1}{\brak{k}^{n+1}\brak{\tau}^{2}}\abs{\widehat{\brak{\grad}^{\sigma}q}_0\left(\tau, k\left(\frac{1 - e^{-\nu (t-\tau)}}{\nu} \right)\right)} \dd \tau \right]^2 \dd t \\ 
& \lesssim \nu^{-2\gamma/3} \norm{r}^2_{L^\infty_t \cM^{\beta,\delta}} \sum_{k \in \Integers_\ast^n} \int_0^{T_\ast}  \int_0^t S(t-\tau,k) \frac{1}{\brak{k}^{2n+2}\brak{\tau}^{2}}\abs{\widehat{\brak{\grad}^{\sigma}q}_0\left(\tau, k\left(\frac{1 - e^{-\nu (t-\tau)}}{\nu} \right)\right)}^2 \dd \tau \dd t \\ 
& \lesssim \nu^{-2\gamma/3} \norm{r}^2_{L^\infty_t \cM^{\beta,\delta}} \sum_{k \in \Integers_\ast^n} \int_0^{T_\ast}  \int_{-\infty}^\infty \frac{1}{\brak{k}^{2n+2}\brak{\tau}^{2}}\abs{\widehat{\brak{\grad}^{\sigma}q}_0\left(\tau,\frac{k}{\abs{k}}s\right)}^2 \dd s \dd \tau \\ 
& \lesssim \nu^{-2\gamma/3} \norm{r}_{L^\infty_t \cM^{\beta,\delta}}^2 \norm{q_0}_{L^\infty_t H^{\sigma}_m}^2. 
\end{align*}
\end{proof}

\subsection{$L^2_t \cM^{\sigma,\delta}$ estimate on the density} \label{sec:rhoHi}
In this section, we improve \eqref{ineq:bootArho}. 
For notational convenience, write
\begin{align}
B(t,k) = e^{\delta \nu^{1/3}t} \abs{k}^{1/2} \brak{k,tk}^\sigma. \label{def:B}
\end{align}
By Lemma \ref{lem:LinLandau}, there holds, 
\begin{align}
\norm{\abs{\grad}^{1/2}\rho}_{L^2_{t} \cM^{\sigma,\delta}}^2 & \lesssim \cI  + \mathcal{N}_{CL} + J_{\cC_{\mu}} + J_{\cC_{h}}, \label{ineq:Brho}
\end{align}
where the three contributions are from the collisionless nonlinearity and the decaying contributions from the collision operator,
\begin{subequations} \label{def:BrhoTrms}
\begin{align}
\cI & = \int_0^{T_\ast} \sum_{k \neq 0} \abs{B(t,k) S(t,k) \widehat{h_{in}}(k,k\frac{1-e^{-\nu t}}{\nu})}^2 \dd t \\
\mathcal{N}_{CL} & = \int_0^\infty \sum_{k \neq 0} \left[B(t,k)\int_0^t S(t-\tau,k) \sum_{\ell \neq 0} \hat{\rho}(\tau,\ell) \widehat{W}(\ell) \ell \cdot k\left( \frac{1 - e^{-\nu(t-\tau)}}{\nu}\right)  \right. \\ & \left. \quad\quad \times \widehat{f}\left(\tau,k-\ell, k\left(\frac{1 - e^{-\nu t}}{\nu}\right) - \ell \left(\frac{1-e^{-\nu \tau}}{\nu}\right) \right) \dd \tau \right]^2 \dd t \\
J_{\cC_{\mu}} & = \int_0^\infty \sum_{k \neq 0} \left[B(t,k)\int_0^t S(t-\tau,k) \widehat{\cC_{\mu}}\left(\tau,\bar{\eta}(\tau;k,\eta_{CT}(t))\right) \right]^2 \dd t \\
J_{\cC_{h}} & = \int_0^\infty \sum_{k \neq 0} \left[B(t,k)\int_0^t S(t-\tau,k) \widehat{\cC_{h}}\left(\tau,\bar{\eta}(\tau;k,\eta_{CT}(t))\right) \right]^2 \dd t. 
\end{align}
\end{subequations}
By $\abs{t} \leq (e^{\nu t}-1)/\nu$, the change of coordinates $s = \abs{k}(1-e^{-\nu t})/\nu$, Lemma \ref{lem:propS}, and the Sobolev trace lemma (see e.g. the usage in \cite{BMM13}), there holds
\begin{align}
\cI  & \lesssim \sum_{k \in \Integers_\ast^n}\int_0^\infty e^{2\delta \nu^{1/3 }t} S^2(t,k) \brak{k,kt}^{2\sigma}  \abs{k} \abs{\widehat{h_{in}}\left(k,k\left(\frac{1-e^{-\nu t}}{\nu}\right) \right)   }^2 \dd t \nonumber \\ 
& \lesssim \sum_{k \in \Integers_\ast^n} \int_0^\infty e^{2\delta \nu^{1/3 }t} S^2(t,k) e^{\sigma \nu t}  \abs{k} \abs{\widehat{\brak{\grad}^\sigma h_{in}}\left(k,k\left(\frac{1-e^{-\nu t}}{\nu}\right) \right)   }^2 \dd t \nonumber \\ 
& \lesssim \sum_{k \in \Integers_\ast^n} \int_{-\infty}^{\infty} \abs{\widehat{\brak{\grad}^\sigma h_{in}}\left(k,\frac{k}{\abs{k}}s\right)   }^2 \dd s \nonumber \\ 
& \lesssim \sum_{k \in \Integers_\ast^n} \sup_{\omega \in \mathbb S^{n-1}} \int_{-\infty}^{\infty} \abs{\widehat{\brak{\grad}^\sigma h_{in}}\left(k, \omega s\right)   }^2 \dd s \nonumber \\ 
& \lesssim \norm{h_{in}}_{H^{\sigma}_{m}}^2 \nonumber \\ 
& \lesssim \eps^2.  \label{ineq:Bhin} 
\end{align}
The remainder of the section is dedicated to dealing with the nonlinear contributions in \eqref{ineq:Brho}. 

\subsubsection{Nonlinear collisionless contributions}
In this section we deal with the contribution of the acceleration term, $\cN_{CL}$, in \eqref{ineq:Brho}.  
Consider the contributions involving $h_0$:  
\begin{align*}
\mathcal{N}_{CL;0} & := \int_0^{T_\ast} \sum_{k \in \Integers_\ast^n} \left[B(t,k)\int_0^t S(t-\tau,k) \hat{\rho}(\tau,k) \widehat{W}(k)  \abs{k}^2 \left( \frac{1 - e^{-\nu(t-\tau)}}{\nu}\right)  \right. \\ & \left. \quad\quad \times \widehat{h}\left(\tau,0,k\left(\frac{1 - e^{-\nu(t-\tau)}}{\nu}\right)\right) \dd \tau \right]^2 \dd t.
\end{align*}
By Lemma \ref{lem:GenCEst0mode} and Remark \ref{rmk:HsmInterp}, it follows that 
\begin{align*}
\abs{\mathcal{N}_{CL;0}} & \lesssim \norm{\abs{\grad}^{1/2} \rho}^2_{L^2_t \cM^{\sigma,\delta}}\norm{h_0(t)}^2_{L^\infty_t H^{n/2+2}_{m}} + \nu^{-2/3}\norm{\rho}_{L^\infty_t \cM^{\beta,\delta}}^2 \norm{h_0}_{L^\infty_t H^{\sigma}_{m}}^2 \\ 
& \lesssim \eps^4 + \nu^{-2/3}\eps^4, 
\end{align*}
which is consistent with Proposition \ref{prop:boot} by choosing $\eps < c_0 \nu^{1/3}$ for $c_0$ sufficiently small (depending on the bootstrap constants). 

Next, we consider the case $k\neq \ell$:
\begin{align*}
\mathcal{N}_{CL;\neq} & := \int_0^{T_\ast} \sum_{k \in \Integers_\ast^n} \left[B(t,k)\int_0^t \sum_{\ell \in \Integers_\ast^n} S(t-\tau,k) \hat{\rho}(\tau,\ell) \widehat{W}(\ell)\ell \cdot k \left( \frac{1 - e^{-\nu(t-\tau)}}{\nu}\right)  \right. \\ & \left. \quad\quad \times \widehat{f}\left(\tau,k-\ell, k\left(\frac{1 - e^{-\nu t}}{\nu}\right) - \ell \left(\frac{1-e^{-\nu \tau}}{\nu}\right)\right) \dd \tau \right]^2 \dd t.
\end{align*}
By Lemma \ref{lem:GenCEst}, there holds for $\delta$ and $\nu$ sufficiently small, 
\begin{align*}
\abs{\mathcal{N}_{CL,\neq}} & \lesssim \nu^{-1/2}\norm{\abs{\grad}^{1/2}\rho}_{L^\infty_t \cM^{\beta,\delta}}^2\left(\sup_{\tau \in [0,T^\ast]} \brak{\tau}^{-p}\norm{f(\tau)}_{F^{\sigma+1/2,\delta_1}_m}\right)^2 \\ 
& \quad + \nu^{-2/3}\norm{\rho}_{L^2_t \cM^{\sigma,\delta}}^2 \left(\sup_{\tau \in [0,T^\ast]} \norm{f(\tau)}_{F^{n+1,\delta_1}_m}\right)^2 \\ 
& \lesssim \nu^{-1/2}\eps^4 + \nu^{-2/3}\eps^4. 
\end{align*}
This is consistent with Proposition \ref{prop:boot} by choosing $\eps < c_0 \nu^{1/3}$ for $c_0$ sufficiently small. 

\subsubsection{$J_{\cC_\mu}$ collision contributions}
Consider next the contributions from $\cC_\mu$. From \eqref{def:cCmu}, we have 
\begin{align*}
J_{\cC_\mu} & \lesssim \int_0^{T^\ast} \sum_{k \in \Integers_\ast^n} \left[B(t,k)\int_0^t \nu S(t-\tau,k) \widehat{M_T}(\tau,k) \left(\widehat{\Delta_v \mu}\right)\left(k\left(\frac{1-e^{-\nu (t-\tau)}}{\nu}\right)\right) \dd \tau \right]^2 \dd t \\ 
& \quad + \int_0^{T^\ast} \sum_{k \in \Integers_\ast^n} \left[B(t,k)\int_0^t \nu S(t-\tau,k) \widehat{M_1}(\tau,k) \cdot \left(\widehat{\grad_v \mu}\right)\left(k\left(\frac{1-e^{-\nu (t-\tau)}}{\nu}\right)\right) \dd \tau \right]^2 \dd t. 
\end{align*}
By an easy variant of Lemma \ref{lem:GenCEst0mode}, there holds 
\begin{align}
J_{\cC_\mu} & \lesssim \nu^2 \norm{\abs{\grad}^{1/2} M_T}_{L^2_t \cM^{\sigma,\delta}}^2 + \nu^2 \norm{\abs{\grad}^{1/2} M_1}_{L^2_t \cM^{\sigma,\delta}}^2. 
\end{align}
which, by Lemma \ref{lem:uTctrls}  and \eqref{ineq:bootAM1}, is consistent with Proposition \ref{prop:boot} for $\nu$ small. 

\subsubsection{$J_{\cC_h}$ collision contributions}
Turn next to $J_{\cC_h}$ in \eqref{ineq:Brho}, defined in \eqref{def:BrhoTrms}. 
Recall from \eqref{def:cCh} the definition of $\cC_h$, which induces the natural decomposition of $J_{\cC_h}  \lesssim \sum_{j=1}^{4} J_{\cC_h;j}$. 

Consider the contribution from $J_{\cC_h;1}$,  which is challenging due to the presence of two derivatives: 
\begin{align*}
J_{\cC_h;1} & \lesssim   \int_0^{T^\ast} \sum_{k \in \Integers_\ast^n} \left[B(t,k)\int_0^t \nu S(t-\tau,k) \hat{\rho}(\tau,k) \abs{k}^2 \left(\frac{1-e^{-\nu(t-\tau)}}{\nu}\right)^2 \widehat{h}_0\left(k\left(\frac{1-e^{-\nu (t-\tau)}}{\nu}\right)\right) \dd \tau \right]^2 \dd t \\ 
& \quad + \nu^2 \int_0^{T_\ast} \sum_{k \in \Integers_\ast^n} \left[B(t,k)\int_0^t \nu S(t-\tau,k) \sum_{\ell \in \Integers_\ast^n} \hat{\rho}(\tau,\ell)\abs{k}^2\left(\frac{1-e^{-\nu (t-\tau)}}{\nu} \right)^2 \right. \\ & \left. \quad\quad \times \widehat{f}_{\neq}\left(\tau,k-\ell,k\left(\frac{1-e^{-\nu t}}{\nu}\right)- \ell \left(\frac{1-e^{-\nu \tau}}{\nu}\right)\right) \dd \tau \right]^2 \dd t \\ 
& : =  J_{\cC_h;1,0} + J_{\cC_h;1,\neq}.
\end{align*}
By Lemma \ref{lem:GenCEst0mode}, 
\begin{align*}
J_{\cC_h;1,0} & \lesssim \nu^{4/3}\norm{\rho}_{L^\infty_t (I;\cM^{\beta,\delta})}^2\norm{h_0}_{L^\infty_t H^{\sigma}_m}^2 + \nu^2\norm{\abs{\grad}^{1/2}\rho}_{L^2_t(I;\cM^{\sigma,\delta})}^2 \norm{h_0}_{L^\infty_t H^{n/2+2}_{m}}^2, 
\end{align*}
which is consistent with Proposition \ref{prop:boot} for $\nu$ and $\eps$ sufficiently small by Remark \ref{rmk:HsmInterp} and the bootstrap hypotheses. 
By Lemma \ref{lem:GenCEst} with $\gamma=2$, there holds 
\begin{align*}
J_{\cC_h;1,\neq} & \lesssim \nu^{1/2}\norm{\rho}_{L^\infty_t (I;\cM^{\beta,\delta})}^2 \left(\int_0^T \nu \jap{\tau}^{-p-2}\left(\norm{f(\tau)}_{\cD^{\sigma+1/2,\delta_1}_{m}}^2 + \norm{f(\tau)}_{F^{\sigma+1/2,\delta_1}_m}^2 \right) \dd \tau \right) \\ & \quad + \nu^{2/3}\norm{\abs{\grad}^{1/2} \rho}_{L^2_t(I;\cM^{\sigma,\delta})}^2 \left(\sup_{\tau \in I} \norm{f(\tau)}_{F^{n+2,\delta_1}_{m}}\right)^2, 
\end{align*}
which is consistent with Proposition \ref{prop:boot} for $\nu$ sufficiently small by the bootstrap hypotheses.  
This completes the treatment of $J_{\cC_h;1}$.
The $J_{\cC_h;2}$ and $J_{\cC_h;4}$ terms are treated analogously via Lemmas \ref{lem:GenCEst0mode} and \ref{lem:GenCEst} with $\gamma=1$, as $(M_1)_0$ vanishes by conservation of momentum.

Next, consider $J_{\cC_h;3}$ which we divide via the zero and non-zero mode contributions of $M_T$: 
\begin{align*}
J_{\cC_h;3} & \lesssim \int_0^{T^\ast} \sum_{k \neq 0} \left[B(t,k)\int_0^t \nu S(t-\tau,k) (\widehat{M_T})_0(\tau) \abs{k}^2\left(\frac{1-e^{-\nu (t-\tau)}}{\nu} \right)^2 \right. \\ & \left. \quad\quad \times \widehat{f}\left(\tau,k,k\left(\frac{1-e^{-\nu t}}{\nu}\right)\right) \dd \tau \right]^2 \dd t \\
& \quad + \int_0^{T^\ast} \sum_{k \neq 0} \left[B(t,k)\int_0^t \nu S(t-\tau,k) \widehat{M_T}(\tau,k) \abs{k}^2 \left(\frac{1-e^{-\nu(t-\tau)}}{\nu}\right)^2 \widehat{h}_0\left(k\left(\frac{1-e^{-\nu (t-\tau)}}{\nu}\right)\right) \dd \tau \right]^2 \dd t \\
& \quad + \int_0^{T_\ast} \sum_{k \neq 0} \left[B(t,k)\int_0^t \nu S(t-\tau,k) \sum_{\ell \neq 0, \ell \neq k} \widehat{M_T}(\tau,\ell)\abs{k}^2\left(\frac{1-e^{-\nu (t-\tau)}}{\nu} \right)^2 \right. \\ & \left. \quad\quad \times \widehat{f}\left(\tau,k-\ell,k\left(\frac{1-e^{-\nu t}}{\nu}\right)- \ell \left(\frac{1-e^{-\nu \tau}}{\nu}\right)\right) \dd \tau \right]^2 \dd t \\
& := J_{\cC_h;3,0\neq} + J_{\cC_h;3,0} + J_{\cC_h;3,\neq}. 
\end{align*}
The latter two terms, $J_{\cC_h;3,0}$, $J_{\cC_h;3,\neq}$, are treated as in $J_{\cC_h;1}$ above and are hence omitted for the sake of brevity.  
Hence, it remains only to consider $J_{\cC_h;3,0\neq}$.
Using 
\begin{align*}
\abs{kt} \lesssim e^{\nu t} \abs{k}\frac{1-e^{-\nu t}}{\nu} = e^{\nu \tau} e^{\nu (t-\tau)} \abs{k}\frac{1-e^{-\nu t}}{\nu}, 
\end{align*}
implies by Lemma \ref{lem:propS} and the hypoelliptic smoothing \eqref{ineq:hypoelip2},   
\begin{align*}
J_{\cC_h;3,0\neq} & \lesssim \int_0^{T^\ast} \sum_{k \neq 0} \left[\int_0^t \nu^{3/4} S^{1/2}(t-\tau,k) e^{\delta \nu^{1/3}\tau}\frac{(\widehat{M_T})_0(\tau)}{(t-\tau)^{1/4}} \abs{k}^{3/2}\left(\frac{1-e^{-\nu (t-\tau)}}{\nu} \right)^{3/2} \right. \\ & \left. \quad\quad \times e^{\frac{1}{4}\delta_1 \nu^{1/3} \tau}\abs{k}\widehat{\brak{\grad}^{\sigma}f}\left(\tau,k,k\left(\frac{1-e^{-\nu t}}{\nu}\right)\right) \dd \tau \right]^2 \dd t. 
\end{align*}
Note  
\begin{align*}
\abs{k \frac{1-e^{-\nu(t-\tau)}}{\nu}} = e^{\nu \tau}\abs{k\frac{1-e^{-\nu t}}{\nu} - k \frac{1-e^{-\nu \tau}}{\nu}} \lesssim \brak{\tau} e^{\nu \tau} \brak{k,k\frac{1-e^{-\nu t}}{\nu}}, 
\end{align*}
which implies 
\begin{align*}
J_{\cC_h;3,0\neq} & \lesssim \int_0^{T^\ast} \sum_{k \neq 0} \left[\int_0^t \nu^{3/4} S^{1/2}(t-\tau,k) e^{\delta \nu^{1/3}\tau}\frac{\brak{\tau}^{1/2}(\widehat{M_T})_0(\tau)}{(t-\tau)^{1/4}} \abs{k}\left(\frac{1-e^{-\nu (t-\tau)}}{\nu} \right) \right. \\ & \left. \quad\quad \times e^{\frac{1}{2}\delta_1 \nu^{1/3 }\tau}\abs{k}\widehat{\brak{\grad}^{\sigma+1/2}f}\left(\tau,k,k\left(\frac{1-e^{-\nu t}}{\nu}\right)\right) \dd \tau \right]^2 \dd t \\ 
& \lesssim \nu^{3/2}\left(\sup_{t \in I}\int_0^t e^{\delta \nu^{1/3}\tau}\frac{\brak{\tau}(\widehat{M_T})_0(\tau)}{(t-\tau)^{1/2}} \dd \tau\right) \int_0^{T^\ast} \sum_{k\neq 0} \int_0^t S^{1/2}(t-\tau,k) e^{\delta \nu^{1/3}\tau}\abs{(\widehat{M_T})_0(\tau)} \\ & \quad\quad \times e^{\frac{3}{4}\delta_1 \nu^{1/3 }\tau}\abs{k}\abs{\widehat{\partial_v^t \brak{\grad}^{\sigma+1/2}f}\left(\tau,k,k\left(\frac{1-e^{-\nu t}}{\nu}\right)\right)}^2 \dd t \dd \tau \\ 
&  \lesssim \nu^{3/2} \eps  \int_0^{T^\ast} e^{\delta \nu^{1/3}\tau}\abs{(\widehat{M_T})_0(\tau)} \\ &  \quad\quad \times \int_0^{T^\ast} \sum_{k\neq 0} e^{-\nu(t-\tau)} e^{\frac{3}{4}\delta_1 \nu^{1/3 }\tau}\abs{k}\abs{\widehat{\partial_v^t \brak{\grad}^{\sigma+1/2}f}\left(\tau,k,k\left(\frac{1-e^{-\nu t}}{\nu}\right)\right)}^2 \dd t \dd \tau
\end{align*}
From here we proceed as in the treatment of $\cC_{LH}$ in Lemma \ref{lem:GenCEst} and we deduce,
\begin{align*}
J_{\cC_h;3,0\neq} & \lesssim \nu^{1/2} \eps^2 \left(\int_0^T \nu \jap{\tau}^{-p-2}\left(\norm{f(\tau)}_{\cD^{\sigma+1/2,\delta_1}_{m}}^2 + \norm{f(\tau)}_{F^{\sigma+1/2,\delta_1}_m}^2 \right) \dd \tau \right) \lesssim \nu^{1/2} \eps^4,  
\end{align*}
which is consistent with Proposition \ref{prop:boot} for $\nu$ sufficiently small. This completes the treatment of $J_{\cC_h}$ in \eqref{ineq:Brho} and hence of the improvement of \eqref{ineq:bootArho}. 

\subsection{$L^\infty_t$  control on $\rho$} \label{sec:Linfrho}
This estimate is an easy consequence of \eqref{eq:rhoDef} and the higher norm estimates.
Similar estimates are done in \cite{MouhotVillani11,BMM13} and hence we only sketch the details.
Write
\begin{align*}
B'(t,k) = \brak{k,kt}^{\beta} e^{\delta \nu^{1/3}t}  
\end{align*}
and compute 
\begin{align*}
\norm{B' \rho(t)}_{L^2_k}^2 & \lesssim \cI  + \mathcal{L} + \mathcal{N}_{CL} + J_{\cC_{\mu}} + J_{\cC_{h}},
\end{align*}
where 
\begin{align*}
\cI & = \sum_{k \in \Integers_\ast^n} \abs{B(t,k)S(t,k)\widehat{h_{in}} \left( k,k\frac{1-e^{-\nu t}}{\nu} \right) }^2 \\ 
\mathcal{L} & = \sum_{k \in \Integer^n_\ast} \left[B'(t,k) \int_0^t S(t-\tau,k) \hat{\rho}(\tau,k) \widehat{W}(k) \abs{k}^2 \left( \frac{1-e^{-\nu(t-\tau)}}{\nu} \right) \widehat{\mu}\left(k\frac{1-e^{-\nu(t-\tau)}}{\nu} \right) \dd \tau \right]^2 \\ 
\mathcal{N}_{CL} & = \sum_{k \in \Integer_\ast^n} \left[B'(t,k)\int_0^t S(t-\tau,k) \sum_{\ell \neq 0} \hat{\rho}(\tau,\ell) \widehat{W}(\ell) \ell \cdot k\left( \frac{1 - e^{-\nu(t-\tau)}}{\nu}\right)  \right. \\ & \left. \quad\quad \times \widehat{f}\left(\tau,k-\ell, k\left(\frac{1 - e^{-\nu t}}{\nu}\right) - \ell \left(\frac{1-e^{-\nu \tau}}{\nu}\right) \right) \dd \tau \right]^2 \\
J_{\cC_{\mu}} & = \sum_{k \in \Integer_\ast^n} \left[B'(t,k)\int_0^t S(t-\tau,k) \widehat{\cC_{\mu}}\left(\tau,\bar{\eta}(\tau;k,\eta_{CT}(t))\right) \right]^2  \\
J_{\cC_{h}} & = \sum_{k \in \Integer_\ast^n} \left[B(t,k)\int_0^t S(t-\tau,k) \widehat{\cC_{h}}\left(\tau,\bar{\eta}(\tau;k,\eta_{CT}(t))\right) \right]^2. 
\end{align*}
By Lemma \ref{lem:propS} and Sobolev embedding on the Fourier side: 
\begin{align*}
\cI & \lesssim \sum_{k \in \Integers_\ast^n} S^2(t,k) e^{2\delta \nu^{1/3}t} \brak{k,k \frac{1-e^{-\nu t}}{\nu}}^{2\beta} \abs{\widehat{h_{in}}\left(k,k\left(\frac{1-e^{-\nu t}}{\nu}\right) \right) }^2 \\
& \lesssim \norm{h_{in}}_{H^{\beta}_{m}}^2 \lesssim \eps^2. 
\end{align*}
Consider next the linear collisionless term. 
By Lemma \ref{lem:propS} and Cauchy-Schwarz, 
\begin{align*}
\abs{\mathcal{L}} & \lesssim \sum_{k \in \Integer_\ast^n} \left[\int_0^t \brak{k,kt}^{\beta-\sigma} \abs{B\rho(\tau,k)} \brak{k\frac{1-e^{-\nu(t-\tau)}}{\nu}}^{1+\sigma} \widehat{\mu}\left(k\frac{1-e^{-\nu(t-\tau)}}{\nu} \right) \dd \tau\right]^2 \\ 
& \lesssim \norm{\rho}_{L^2_t \cM^{\sigma,\delta}}^2 \\ 
& \lesssim K_{MH0}^2\eps^2, 
\end{align*}
which is consistent with Proposition \ref{prop:boot} provided we set $K_{ML0} \gg K_{MH0}$. 
The nonlinear acceleration term is treated similarly: 
\begin{align*}
\abs{\mathcal{N}_{CL}} & \lesssim \sum_{k \neq 0} \left[\int_0^t \brak{k,kt}^{-2n-2} S^{1/2}(t-\tau,k) \sum_{\ell \neq 0} \brak{\ell,\ell \tau}^{\sigma} e^{\delta \nu^{1/3}\tau} \abs{\hat{\rho}(\tau,\ell) \widehat{W}(\ell) \ell \cdot k\left( \frac{1 - e^{-\nu(t-\tau)}}{\nu}\right)}  \right. \\ & \left. \quad\quad \times \abs{\widehat{f}\left(\tau,k-\ell, k\left(\frac{1 - e^{-\nu t}}{\nu}\right) - \ell \left(\frac{1-e^{-\nu \tau}}{\nu}\right) \right)} \dd \tau \right]^2 \\
& \quad + \sum_{k \neq 0} \left[\int_0^t \brak{k,kt}^{-2n-2} S^{1/2}(t-\tau,k) \sum_{\ell \neq 0} e^{\delta \nu^{1/3}\tau} \abs{\hat{\rho}(\tau,\ell) \widehat{W}(\ell) \ell \cdot k\left( \frac{1 - e^{-\nu(t-\tau)}}{\nu}\right)}  \right. \\ & \left. \quad\quad \times  \abs{\widehat{\brak{\grad}^{\sigma+1/2-6}f}\left(\tau,k-\ell, k\left(\frac{1 - e^{-\nu t}}{\nu}\right) - \ell \left(\frac{1-e^{-\nu \tau}}{\nu}\right) \right)} \dd \tau \right]^2. 
\end{align*}
We take the contributions from $f$ in $L^\infty$ in time and frequency (using the moment controls and $\delta_1 > 0$ and $\nu$ sufficiently small) and $\rho$ in $L^2_{t,k}$: 
\begin{align*}
\abs{\mathcal{N}_{CL}} & \lesssim \norm{\rho}_{L^2_t \cM^{\sigma,\delta}}^2 \norm{f}_{F^{\sigma+1/2-6,\delta_1}_{m+1}}^2 \lesssim \eps^4. 
\end{align*}
From here it is apparent that with a sufficient regularity gap between $\beta$ and $\sigma$ and sufficiently high moment controls on $f$, 
the collisional terms can all be similarly treated. Hence, we omit the details for the sake of brevity. 

\section{Higher order moment estimates} \label{sec:highMoments}
In this section, we improve \eqref{ineq:bootAM1}, \eqref{ineq:bootAM2}, \eqref{ineq:bootLinftAM1}, and \eqref{ineq:bootLinftAM2}. 
We will only sketch the details for the much harder $L^2_t \cM^{\sigma,\delta}$ control. 
After techniques are developed for this estimate, the $L^\infty_t$ estimates are a straightforward  adaptation of \S\ref{sec:Linfrho} and hence are omitted for the sake of brevity. 

\subsection{Estimate on $M_1$, the first moment} \label{sec:M1}
As for $\rho$, by the conservation law \eqref{eq:M1ctrl}, the $x$-average of the $M_1$ vanishes. 
We begin by deriving the analogue of \eqref{eq:rhoDef} for $M_1$, beginning with
\begin{align}
\partial_t \grad_\eta \hat{f} + \grad_\eta \cL  = -\nu \bar{\eta}^2 \grad_\eta \hat{f} -2 \nu e^{\nu t} \bar{\eta}  \hat{f} + \grad_\eta \mathcal{N},  \label{eq:fhatM1}
\end{align}
Duhamel's formula gives (applying also \eqref{eq:hatfDuhamel} on the linear term $\nu e^{\nu t}\bar{\eta} \hat{f}$),  
\begin{align}
\grad_\eta \hat{f} & = S(t,0;k,\eta)\grad_\eta \hat{f}_{in}(k,\eta)  - \int_0^t S(t,\tau;k,\eta) (\grad_\eta \mathcal{L})(\tau,k,\bar{\eta}(\tau;k,\eta)) \dd \tau \nonumber \\
& \quad - \int_0^t S(t,\tau;k,\eta) (\grad_\eta \mathcal{N})(\tau,k,\bar{\eta}(\tau;k,\eta)) \dd \tau \nonumber \\
& \quad - 2\nu \int_0^t S(t,\tau;k,\eta) e^{\nu \tau} \bar{\eta}(\tau;k,\eta) S(\tau,0;k,\eta) \widehat{f_{in}}(k,\eta) \dd \tau \nonumber \\ 
& \quad - 2\nu \int_0^t S(t,\tau;k,\eta) e^{\nu \tau} \bar{\eta}(\tau;k,\eta) \int_0^\tau S(\tau,\tau';k,\eta) \mathcal{L}(\tau',k,\bar{\eta}(\tau';k,\eta)) \dd \tau' \dd \tau \nonumber \\
& \quad - 2\nu \int_0^t S(t,\tau;k,\eta) e^{\nu \tau} \bar{\eta}(\tau;k,\eta) \int_0^\tau S(\tau,\tau';k,\eta) \mathcal{N}(\tau',k,\bar{\eta}(\tau';k,\eta)) \dd \tau' \dd \tau. \label{eq:M1f}
\end{align}
Evaluating at $\eta = \eta_{CT}(t,k)$ gives the expression for $M_1$ (recall the definition \eqref{def:S}),  
\begin{align}
\widehat{M_1}(t,k)  & = e^{-\nu t} S(t,k)\grad_\eta \hat{f}_{in}(k,\eta_{CT}(t,k))  - \int_0^t S(t-\tau,k) e^{-\nu t} (\grad_\eta \mathcal{L})(\tau,k,\bar{\eta}(\tau;k,\eta_{CT}(t,k))) \dd \tau \nonumber \\
& \quad - \int_0^t S(t-\tau,k) e^{-\nu t} (\grad_\eta \mathcal{N})(\tau,k,\bar{\eta}(\tau;k,\eta_{CT}(t,k))) \dd \tau \nonumber \\
& \quad - 2\nu \int_0^t S(t-\tau,k) e^{\nu (\tau-t)} \bar{\eta}(\tau;k,\eta) S(\tau,0;k,\eta_{CT}(t,k)) \widehat{f_{in}}(k,\eta_{CT}(t,k)) \dd \tau \nonumber \\ 
& \quad - 2\nu \int_0^t S(t-\tau,k) e^{\nu (\tau-t)} \bar{\eta}(\tau;k,\eta) \int_0^\tau S(\tau,\tau';k,\eta_{CT}(t,k)) \mathcal{L}(\tau',k,\bar{\eta}(\tau';k,\eta_{CT}(t,k))) \dd \tau' \dd \tau \nonumber \\
& \quad - 2\nu \int_0^t S(t-\tau;k) e^{\nu (\tau-t)} \bar{\eta}(\tau;k,\eta) \int_0^\tau S(\tau,\tau';k,\eta_{CT}(t,k)) \mathcal{N}(\tau',k,\bar{\eta}(\tau';k,\eta_{CT}(t,k))) \dd \tau' \dd \tau \nonumber \\
& \quad + \mathcal{I}_1 + \mathcal{L}_{1} + \mathcal{N}_1 + \mathcal{I}_{0}  + \mathcal{L}_0 + \mathcal{N}_{0}. \label{eq:M1}
\end{align}
As in \S\ref{sec:density}, we apply $B$ to both sides of \eqref{eq:M1} and take $L^2$ norms in $t$ and $k$. 

\subsubsection{Treatment of $\mathcal{I}_1$, $\mathcal{L}_1$, and $\mathcal{N}_1$}
These terms can be treated by a small variation of arguments used to treat $\rho$ in \S\ref{sec:density}. 
The $\cI_1$ initial data term is treated in the same manner as in \eqref{ineq:Bhin}; we omit the details: 
\begin{align}
\norm{\abs{\grad}^{1/2}\mathcal{I}}_{L^2_t \cM^{\sigma,\delta}} \lesssim \eps. 
\end{align}

\subsubsection{The $\mathcal{L}_1$ term} \label{sec:M1L1}
Computing $\grad_{\eta}\cL$ gives
\begin{align*}
e^{-\nu t}\grad_\eta \cL(\tau,k,\bar{\eta}(\tau;k,\eta_{CT}(t,k))) & = e^{-\nu(t-\tau)} \hat{\rho}(\tau,k) \widehat{W}(k) k \widehat{\mu}\left(k\frac{1-e^{\nu(t-\tau)}}{\nu}\right) \\ & \quad + e^{-\nu(t-\tau)} \hat{\rho}(\tau,k) \hat{W}(k) \abs{k}^2 \left( \frac{1-e^{-\nu(t-\tau)}}{\nu}\right) \grad_\eta \widehat{\mu}\left(k\frac{1-e^{-\nu(t-\tau)}}{\nu}\right). 
\end{align*}
By Lemma \ref{lem:GenCEst0mode}, it follows that 
\begin{align*}
\norm{\abs{\grad}^{1/2}\mathcal{L}_1}_{L^2_t \cM^{\sigma,\delta}} \lesssim \norm{\rho}_{L^2_t \cM^{\sigma,\delta}}, 
\end{align*}
which is consistent with Proposition \ref{prop:boot} provided we choose $K_{MH1} \gg K_{MH0}$ (see Remark \ref{rmk:bootconst}). 

\subsubsection{The $\mathcal{N}_1$ term}
First, write 
\begin{align*}
\grad_\eta \mathcal{N}(\tau,k,\bar{\eta}(\tau,k,\eta_{CT}(t,k)) & = (\grad_\eta \cN_{CL})(\tau,k,\bar{\eta}(\tau;k,\eta_{CT})) \\ 
& \quad + \nu \left(\grad_{\eta}\widehat{\cC_{\mu}}\right)(\tau,k,\bar{\eta}(\tau;k,\eta_{CT})) + \nu (\grad_\eta\widehat{\cC_h})(\tau,k,\bar{\eta}(\tau;k,\eta_{CT})) \\ 
& = \cN_{CL} + \nu \cJ_\mu + \nu \cJ_h. 
\end{align*}
The nonlinear collisionless term is given by
\begin{align*}
e^{-\nu t}(\grad_\eta \cN_{CL})(\tau,k,\bar{\eta}(\tau;k,\eta_{CT})) &= \widehat{\rho}(\tau,k) \widehat{W}(k) \left(k e^{-\nu(t- \tau)} \widehat{h_0}\left(\tau,k\frac{1-e^{-\nu(t-\tau)}}{\nu}\right)\right) \\ 
& \quad + \widehat{\rho}(\tau,k) \widehat{W}(k) \left(e^{-\nu(t- \tau)} \abs{k}^2 \frac{1-e^{-\nu(t-\tau)}}{\nu} \grad_\eta\widehat{h_0}\left(\tau,k\frac{1-e^{-\nu(t-\tau)}}{\nu}\right)\right) \\
& \quad + \sum_{k \neq \ell} \widehat{\rho}(\tau,\ell) \widehat{W}(\ell) \ell e^{-\nu(t-\tau)} \widehat{f}\left(\tau,k-\ell,k\frac{1-e^{-\nu t}}{\nu} - \ell \frac{1-e^{-\nu \tau}}{\nu}\right) \\
& \quad + \sum_{k \neq \ell} \widehat{\rho}(\tau,\ell) \widehat{W}(\ell) \ell \cdot k \frac{1-e^{-\nu(t-\tau)}}{\nu} e^{-\nu (t-\tau)} \\ & \quad\quad \times e^{-\nu \tau} \grad_\eta \widehat{f}\left(\tau,k-\ell,k\frac{1-e^{-\nu t}}{\nu} - \ell \frac{1-e^{-\nu \tau}}{\nu}\right) \\ 
& = \sum_{j=1}^4 \mathcal{N}_{CL;j}.
\end{align*}
By Lemma \ref{lem:GenCEst0mode}, there holds 
\begin{align*}
\norm{\int_0^t S(t-\tau,k)\mathcal{N}_{CL;1}(\tau,k) \dd \tau }_{L^2_t \cM^{\sigma,\delta}} & \lesssim \norm{\rho}_{L^\infty_t \cM^{\beta,\delta}}\left(\sup_{\tau \in I} \brak{\tau}^{-\beta/2} \norm{h_0(\tau)}_{H^{\sigma}_m}\right) \\ & \quad + \norm{\abs{\grad}^{1/2}\rho}_{L^2_t \cM^{\sigma,\delta}} \norm{h_0}_{L^\infty_t H^{n/2+1+}_{m}}. 
\end{align*}
By \eqref{ineq:h0Hsig} and the bootstrap hypotheses, this is consistent with Proposition \ref{prop:boot} for $\eps$ sufficiently small. 
The $\mathcal{N}_{CL;2}$ term is treated similarly by Lemma \ref{lem:GenCEst0mode},
\begin{align*}
\norm{\int_0^t S(t-\tau,k)\mathcal{N}_{CL;2}(\tau,k) \dd \tau   }_{L^2_t \cM^{\sigma,\delta}} & \lesssim \nu^{-1/3}\norm{\rho}_{L^\infty_t \cM^{\beta,\delta}}\left(\sup_{\tau \in I} \brak{\tau}^{-\beta/2}\norm{h_0(\tau)}_{H^{\sigma}_m}\right) \\ & \quad + \norm{\abs{\grad}^{1/2}\rho}_{L^2_t \cM^{\sigma,\delta}} \norm{h_0}_{L^\infty_t H^{n/2+1+}_{m}}. 
\end{align*}
By \eqref{ineq:h0Hsig}, this is consistent with Proposition \ref{prop:boot} for $\eps \nu^{-1/3}$ sufficiently small. 
Similarly, $\mathcal{N}_{CL;3}$ and $\mathcal{N}_{CL;4}$ are both treated via Lemma \ref{lem:GenCEst} as in \S\ref{sec:rhoHi} without additional complications. 

As in the collisionless terms, the treatment of the collisional contributions is similar to the treatment of collisions in \S\ref{sec:rhoHi}. 
Indeed, direct calculation gives
\begin{align*}
\nu e^{-\nu t} \grad_\eta J_{\cC_\mu} & = \nu \widehat{M_T}(\tau,k) e^{-\nu(t-\tau)} \grad_\eta\widehat{\Delta_v\mu}\left(k\frac{1-e^{-\nu(t-\tau)}}{\nu}\right) + \nu \widehat{M_1}(\tau,k) e^{-\nu(t- \tau)}\grad_\eta \widehat{\grad_v \cdot (\mu v)} \left(k\frac{1-e^{-\nu(t-\tau)}}{\nu}\right). 
\end{align*}
It is apparent that the contributions are estimated via the same techniques as applied in \S\ref{sec:rhoHi}; the details are omitted for brevity.
Similarly, the contributions of the nonlinear collision term $\cJ_{h}$ are not significantly harder than the analogous estimates in \S\ref{sec:rhoHi}. 

\subsubsection{Treatment $\mathcal{I}_0$, $\mathcal{L}_0$, and $\mathcal{N}_0$} \label{sec:I0M0N0}
First, observe that since $S(t,k)$ satisfies the semigroup property (see \eqref{def:S}), 
\begin{align}
\cI_0 + \cL_0 + \cN_0 & =  - 2\nu S(t,k) \left(\int_0^t e^{\nu (\tau-t)} \bar{\eta}(\tau;k,\eta_{CT}(t,k)) \dd \tau\right) \widehat{f_{in}}(k,\eta_{CT}(t,k)) \nonumber \\ 
& \hspace{-3cm}- 2\nu \int_0^t S(t-\tau',k) \left(\int_{\tau'}^t e^{\nu (\tau-t)} \bar{\eta}(\tau;k,\eta_{CT}(t,k)) \dd \tau\right) \mathcal{L}(\tau',k,\bar{\eta}(\tau';k,\eta_{CT}(t,k))) \dd \tau' \nonumber \\
&\hspace{-3cm}  - 2\nu \int_0^t S(t-\tau';k) \left(\int_{\tau'}^t e^{\nu (\tau-t)} \bar{\eta}(\tau;k,\eta_{CT}(t,k)) \dd \tau\right) \mathcal{N}(\tau',k,\bar{\eta}(\tau';k,\eta_{CT}(t,k))) \dd \tau'.  \label{eq:FubiniM1}
\end{align}
Hence, we are led to consider the quantity 
\begin{align}
\int_{\tau'}^t e^{\nu (\tau-t)}\bar{\eta}(\tau;k,\eta_{CT}(t,k)) \dd \tau & = \int_{\tau'}^t e^{\nu (\tau-t)} k \frac{1-e^{-\nu (t-\tau)}}{\nu} \dd \tau 
 = \frac{k}{2\nu^2}\left(e^{-\nu (t-\tau')} - 1\right)^2.\label{eq:etabarM1Fubini}
\end{align}
It follows that the terms in \eqref{eq:FubiniM1} can be treated as in \S\ref{sec:M1L1} and \S\ref{sec:rhoHi} after noting the following: 
\begin{lemma} \label{lem:propSMoment}
There holds for any fixed $p$, 
\begin{align*}
S^{p}(t-\tau',k) \frac{\abs{k}}{2\nu^2}\left(e^{-\nu (t-\tau')} - 1\right)^2 \lesssim \nu^{-2/3}.
\end{align*}
\end{lemma}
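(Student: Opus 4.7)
Set $r := t-\tau' \geq 0$ and recall that in the applications of this lemma the frequency $k \in \Integers^n_\ast$, so $\abs{k} \geq 1$. The elementary inequality
$$
\bigl(e^{-\nu r}-1\bigr)^2 = \bigl(1-e^{-\nu r}\bigr)^2 \leq \min(\nu^2 r^2, 1)
$$
splits the argument naturally into a short-time regime $\nu r \leq 1$ and a long-time regime $\nu r \geq 1$. In each case, I will combine this with the two corresponding branches of the decay bound $S(r,k) \leq \exp(-\delta_0\min(\nu k^2 r^3, \nu^{-1}k^2 r))$ from Lemma \ref{lem:propS}(a).

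In the short-time regime $\nu r \leq 1$, the bound becomes
$$
S^p(r,k)\,\frac{\abs{k}}{2\nu^2}(e^{-\nu r}-1)^2 \leq \tfrac{1}{2}\abs{k}r^2\exp\bigl(-p\delta_0\,\nu k^2 r^3\bigr),
$$
and the crux is to introduce the self-similar variable $u := \nu^{1/3}\abs{k}^{2/3} r$, under which $\nu k^2 r^3 = u^3$ and $\abs{k}r^2 = u^2 \abs{k}^{-1/3}\nu^{-2/3}$. Using $\abs{k}\geq 1$ and the elementary fact that $\sup_{u\geq 0} u^2 e^{-p\delta_0 u^3} <\infty$, this yields exactly
$$
S^p(r,k)\,\frac{\abs{k}}{2\nu^2}(e^{-\nu r}-1)^2 \lesssim_p \nu^{-2/3},
$$
as claimed. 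The $\nu^{-2/3}$ power is not a matter of tuning: it arises from matching the scale $r\sim (\nu k^2)^{-1/3}$ at which the prefactor $\abs{k}r^2$ is maximized subject to the Gaussian-type exponent.

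In the long-time regime $\nu r \geq 1$, I bound $(e^{-\nu r}-1)^2 \leq 1$ and use the large-time branch $S^p(r,k) \leq \exp(-p\delta_0 k^2 r/\nu)$. Since $r \geq \nu^{-1}$ and $\abs{k}\geq 1$, the exponent satisfies $k^2 r/\nu \geq \nu^{-2}$, and hence
$$
S^p(r,k)\,\frac{\abs{k}}{2\nu^2}(e^{-\nu r}-1)^2 \leq \frac{\abs{k}}{2\nu^2}\exp\!\bigl(-p\delta_0/\nu^2\bigr)\exp\!\bigl(-p\delta_0(k^2-1)/\nu^2\bigr),
$$
which is super-polynomially small in $\nu$ (after trivially absorbing the polynomial factor $\abs{k}/\nu^2$ into half of the exponential, using $\abs{k}\geq 1$). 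In particular, this is $\ll \nu^{-2/3}$ for $\nu$ small. There is no substantive obstacle beyond spotting the correct rescaling in the short-time regime; everything else is routine.
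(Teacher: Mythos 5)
Your proof is correct and follows essentially the same route as the paper: both rest on Lemma \ref{lem:propS}(a) together with the observation that the maximum of $\abs{k}r^2$ against the Gaussian-in-$r^3$ decay is attained at the self-similar scale $r \sim (\nu k^2)^{-1/3}$, giving $\nu^{-2/3}\abs{k}^{-1/3}\lesssim \nu^{-2/3}$. The only difference is cosmetic: you use the sharper $\bigl(1-e^{-\nu r}\bigr)^2 \leq \min(\nu^2 r^2,1)$ with an explicit two-regime split, whereas the paper uses the slightly wasteful $\abs{e^{-\nu r}-1}\leq \nu r\, e^{\nu r}$ and absorbs the spurious $e^{2\nu r}$ into $S^p$; both need, and tacitly use, that $k\in\Integer^n_\ast$ so $\abs{k}\geq 1$.
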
 
\begin{proof} 
From the elementary inequality $\abs{e^{x}-1} \leq xe^x$ for $x > 0$ and Lemma \ref{lem:propS}, 
\begin{align*}
S^{p}(t-\tau',k) \frac{\abs{k}}{2\nu^2}\left(e^{-\nu (t-\tau')} - 1\right)^2 & \lesssim S^{p}(t-\tau',k) \frac{\abs{k}}{2\nu^2}(\nu(t-\tau'))^2 e^{2\nu(t-\tau')} \\
& \lesssim \frac{\abs{k}(t-\tau')^2}{(\nu \abs{k}^2 (t-\tau')^3)^{2/3}} \lesssim \abs{k}^{-1/3}\nu^{-2/3}.
\end{align*}
\end{proof} 
The above arguments complete the improvement of \eqref{ineq:bootAM1} for $\eps \nu^{-1/3} \ll 1$. 
The improvement of \eqref{ineq:bootLinftAM1} follows by the above observations together with the techniques of \S\ref{sec:Linfrho}. 

\subsection{Second moment}
By \eqref{eq:M2ctrl}, the zero mode in $x$ of $M_2$ is automatically controlled by the electric field, and hence it suffices to consider 
only the $x$-dependent modes. 
Begin by taking another derivative of \eqref{eq:fhatM1}: 
\begin{align}
\partial_t \Delta_\eta \hat{f} +\Delta_\eta \cL & = -\nu \bar{\eta}^2 \Delta_\eta \hat{f} - 2\nu e^{\nu t} \bar{\eta} \cdot \grad_\eta \hat{f} - 2\nu e^{2\nu t} \hat{f}  + \Delta_\eta \mathcal{N},  \label{eq:fhatM2}
\end{align}
and applying Duhamel's formula (together with \eqref{eq:hatfDuhamel} and  \eqref{eq:M1f}): 
\begin{align*}
\Delta_\eta \hat{f} & = S(t,0;k,\eta)\Delta_\eta \hat{f}_{in}(k,\eta)  - \int_0^t S(t,\tau;k,\eta) (\Delta_\eta \mathcal{L})(\tau,k,\bar{\eta}(\tau;k,\eta)) \dd \tau \nonumber \\
& \quad - \int_0^t S(t,\tau;k,\eta) (\Delta_\eta \mathcal{N})(\tau,k,\bar{\eta}(\tau;k,\eta)) \dd \tau \nonumber \\
& \quad - \nu \int_0^t S(t,\tau;k,\eta) e^{2\nu \tau}  S(\tau,0;k,\eta) \widehat{f_{in}}(k,\eta) \dd \tau \nonumber \\ 
& \quad - 2\nu \int_0^t S(t,\tau;k,\eta) e^{2\nu \tau} \int_0^\tau S(\tau,\tau';k,\eta) \mathcal{L}(\tau',k,\bar{\eta}(\tau';k,\eta)) \dd \tau' \dd \tau \nonumber \\
& \quad - 2\nu \int_0^t S(t,\tau;k,\eta) e^{2\nu \tau} \int_0^\tau S(\tau,\tau';k,\eta) \mathcal{N}(\tau',k,\bar{\eta}(\tau';k,\eta)) \dd \tau' \dd \tau \nonumber \\ 
& \quad - 2\nu \int_0^t S(t,\tau;k,\eta) e^{\nu \tau} \bar{\eta}(\tau;k,\eta)  S(t,0;k,\eta)\grad_\eta \hat{f}_{in}(k,\eta) \dd \tau \nonumber \\ 
& \quad - 2\nu \int_0^t S(t,\tau;k,\eta) e^{\nu \tau} \bar{\eta}(\tau;k,\eta) \int_0^\tau S(\tau,\tau';k,\eta) (\grad_\eta \mathcal{L})(\tau',k,\bar{\eta}(\tau';k,\eta)) \dd \tau' \dd \tau \nonumber \\
& \quad - 2\nu \int_0^t S(t,\tau;k,\eta) e^{\nu \tau} \bar{\eta}(\tau;k,\eta) \int_0^\tau S(\tau,\tau';k,\eta) (\grad_\eta \mathcal{N})(\tau',k,\bar{\eta}(\tau';k,\eta)) \dd \tau' \dd \tau \nonumber \\
& \quad - 4\nu \int_0^t S(t,\tau;k,\eta) e^{\nu \tau} \bar{\eta}(\tau;k,\eta)  \nu \int_0^{\tau} S(\tau,\tau';k,\eta) e^{\nu \tau'} \bar{\eta}(\tau';k,\eta)\widehat{f_{in}}(k,\eta) \dd \tau' \dd \tau \nonumber \\ 
& \quad - 4\nu \int_0^t S(t,\tau;k,\eta) e^{\nu \tau} \bar{\eta}(\tau;k,\eta)  \nu \int_0^{\tau} S(\tau,\tau';k,\eta) e^{\nu \tau'} \bar{\eta}(\tau';k,\eta) \nonumber \\ & \quad\quad \times \int_0^{\tau'} S(\tau',\tau'';k,\eta) \mathcal{L}(\tau'',k,\bar{\eta}(\tau'';k,\eta)) \dd \tau'' \dd \tau' \dd \tau \nonumber  \\ 
& \quad - 4\nu \int_0^t S(t,\tau;k,\eta) e^{\nu \tau} \bar{\eta}(\tau;k,\eta)  \nu \int_0^{\tau} S(\tau,\tau';k,\eta) e^{\nu \tau'} \bar{\eta}(\tau';k,\eta) \nonumber  \\ & \quad\quad \times \int_0^{\tau'} S(\tau',\tau'';k,\eta) \mathcal{N}(\tau'',k,\bar{\eta}(\tau'';k,\eta)) \dd \tau'' \dd \tau' \dd \tau. 
\end{align*}
Restricting to $\eta = \eta_{CT}(t,k)$ and applying Fubini as in \eqref{eq:FubiniM1} gives the expression for $M_2$: 
\begin{align}
\widehat{M_2}(t,k)& = e^{-2\nu t}S(t,k)\Delta_\eta \hat{f}_{in}(k,\eta_{CT})  - \int_0^t e^{-2\nu t} S(t-\tau,k) (\Delta_\eta \mathcal{L})(\tau,k,\bar{\eta}(\tau;k,\eta_{CT})) \dd \tau \nonumber \\
& \quad - \int_0^t e^{-2\nu t} S(t-\tau,k) (\Delta_\eta \mathcal{N})(\tau,k,\bar{\eta}(\tau;k,\eta_{CT})) \dd \tau \nonumber \\
& \quad - \nu \int_0^t S(t,k) e^{-2\nu(t-\tau)}  \widehat{f_{in}}(k,\eta_{CT}) \dd \tau \nonumber \\ 
& \quad - \nu \int_0^t S(t-\tau',k) \left(\int_{\tau'}^t  e^{-2\nu(t-\tau)} \dd \tau\right) \mathcal{L}(\tau',k,\bar{\eta}(\tau';k,\eta_{CT})) \dd \tau' \nonumber \\
& \quad - \nu \int_0^t S(t-\tau';k) \left(\int_{\tau'}^t e^{-2\nu(t-\tau)} \dd \tau \right)  \mathcal{N}(\tau',k,\bar{\eta}(\tau';k,\eta_{CT})) \dd \tau' \nonumber \\ 
& \quad - \nu e^{-\nu t}S(t,k) \left(\int_0^t e^{-\nu (t-\tau)} \bar{\eta}(\tau;k,\eta_{CT}) \dd \tau\right)  \cdot \grad_\eta \hat{f}_{in}(k,\eta_{CT})\nonumber \\ 
& \quad - \nu e^{-\nu t} \int_0^t S(t-\tau';k) \left(\int_{\tau'}^t e^{-\nu(t- \tau)} \bar{\eta}(\tau;k,\eta_{CT})\dd \tau \right)  (\grad_\eta \mathcal{L})(\tau',k,\bar{\eta}(\tau';k,\eta_{CT})) \dd \tau' \nonumber \\
& \quad - \nu e^{-\nu t} \int_0^t  S(t-\tau',k) \left(\int_{\tau'}^t e^{-\nu(t-\tau)} \bar{\eta}(\tau;k,\eta_{CT}) \dd \tau \right) (\grad_\eta \mathcal{N})(\tau',k,\bar{\eta}(\tau';k,\eta_{CT})) \dd \tau' \nonumber \\
& \quad - 4\nu^2 S(t,k) \left(\int_{\tau''}^t \int_{\tau''}^{\tau} e^{\nu \tau} \bar{\eta}(\tau;k,\eta_{CT}) e^{\nu \tau'} \bar{\eta}(\tau';k,\eta_{CT})\dd \tau' \dd \tau \right) \int_0^t\widehat{f_{in}}(k,\eta_{CT}) \nonumber \\ 
& \quad - 4\nu^2 e^{-2\nu t}  \int_0^t S(t-\tau'',k) \left(\int_{\tau''}^t \int_{\tau''}^{\tau} e^{\nu \tau} \bar{\eta}(\tau;k,\eta_{CT}) e^{\nu \tau'} \bar{\eta}(\tau';k,\eta_{CT})\dd \tau' \dd \tau \right) \nonumber \\ & \quad\quad \times \mathcal{L}(\tau'',k,\bar{\eta}(\tau'';k,\eta_{CT})) \dd \tau'' \nonumber  \\  
& \quad - 4\nu^2e^{-2\nu t} \int_0^t S(t-\tau'',k)\left(\int_{\tau''}^t \int_{\tau''}^{\tau} e^{\nu \tau} \bar{\eta}(\tau;k,\eta_{CT}) e^{\nu \tau'} \bar{\eta}(\tau';k,\eta_{CT})\dd \tau' \dd \tau \right) \nonumber \\ & \quad\quad \times \mathcal{N}(\tau'',k,\bar{\eta}(\tau'';k,\eta_{CT})) \dd \tau''. \label{eq:M2Duhamel}
\end{align}
For the fifth and sixth integrals, notice that $\int_{\tau'}^t e^{-\nu (t-\tau)} d\tau = (1 - e^{-\nu(t-\tau')})/\nu \leq \nu^{-1}$. Hence, we may treat these integrals using the techniques in \S\ref{sec:rhoHi}. 
The next few integrals in \eqref{eq:M2Duhamel} involve \eqref{eq:etabarM1Fubini}, and hence these contributions are similarly treated after applying Lemma \ref{lem:propSMoment}. 
The quantity arising in the latter three integrals of \eqref{eq:M2Duhamel} is
\begin{align*}
e^{-2\nu t}\int_{\tau''}^t  \int_{\tau''}^\tau e^{\nu \tau} \bar{\eta}(\tau;k,\eta_{CT}) e^{\nu \tau'} \bar{\eta}(\tau';k,\eta_{CT}) \dd \tau' \dd \tau & 
\\ & \hspace{-5cm} = e^{-2\nu t}\int_{\tau''}^t  \int_{\tau''}^\tau e^{\nu \tau} e^{\nu \tau'}\abs{k}^2 \left(\frac{1-e^{-\nu(t-\tau')}}{\nu}\right)\left(\frac{1-e^{-\nu(t-\tau)}}{\nu}\right)  \dd \tau' \dd \tau \\ 
& \hspace{-5cm} =  e^{-2\nu t}\int_{\tau''}^t \abs{k}^2 e^{\nu \tau }\left(\frac{1-e^{-\nu(t-\tau)}}{\nu}\right)\left(\frac{e^{\nu \tau} - e^{\nu \tau''}}{\nu^2} - \frac{e^{-\nu t + 2\nu\tau} - e^{-\nu t + 2\nu \tau''}}{2\nu^2}\right) \dd \tau  \\ 
& \hspace{-5cm} = \frac{\abs{k}^2 e^{-4 \nu t}}{8\nu^4}\left(e^{\nu \tau''} - e^{\nu t}\right)^4. 
\end{align*}
As in \S\ref{sec:I0M0N0}, we may treat these integrals using the techniques in \S\ref{sec:rhoHi} given the following: 
\begin{lemma} 
There holds for any fixed $p \in (0,1)$, 
\begin{align}
S^p(t-\tau'',k) \frac{\abs{k}^2 e^{-4 \nu t}}{8\nu^4}\left(e^{\nu \tau''} - e^{\nu t}\right)^4 \lesssim \nu^{-1}. 
\end{align}
\end{lemma}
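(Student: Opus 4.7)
The plan is to mimic the strategy of Lemma \ref{lem:propSMoment} almost verbatim, adapting it to the second-moment algebraic factor $(e^{\nu\tau''}-e^{\nu t})^4/\nu^4$ in place of the first-moment factor $(e^{-\nu(t-\tau')}-1)^2/\nu^2$. First, I would exploit the identity
\[
e^{\nu\tau''}-e^{\nu t} = -e^{\nu t}\bigl(1-e^{-\nu(t-\tau'')}\bigr),
\]
which cancels the $e^{-4\nu t}$ in the numerator exactly and reduces the quantity to the much cleaner expression $\tfrac{|k|^2}{8\nu^4}\bigl(1-e^{-\nu(t-\tau'')}\bigr)^4$. This is the second-moment analogue of the step $|e^{\nu(t-\tau')}-1|\leq \nu(t-\tau')e^{\nu(t-\tau')}$ used in the proof of Lemma \ref{lem:propSMoment}.

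Next, I would apply the elementary inequality $1-e^{-x}\leq x$ for $x\geq 0$, which absorbs the offending $\nu^{-4}$ factor and produces the bound $\tfrac{|k|^2(t-\tau'')^4}{8}$. This is the moment when the structure of the problem makes things work: in the first-moment case one gained $|k|(t-\tau')^2$, and here one gains $|k|^2(t-\tau'')^4$, which is precisely the right homogeneity to be killed by the cubic-in-$t$ exponential decay of $S$.

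The final step is to couple this with the Gaussian envelope for $S^p$ from Lemma \ref{lem:propS}. In the small-time regime $\nu(t-\tau'')\leq 1$, $S^p\leq \exp(-p\delta_0 \nu k^2(t-\tau'')^3)$, and the polynomial envelope $e^{-cx}\lesssim_q x^{-q}$ with an appropriately chosen $q$ cancels the residual powers of $(t-\tau'')$ and exchanges them for the desired power of $\nu^{-1}$, leaving a negative power of $|k|$ that is absorbed by $|k|\geq 1$. In the complementary regime $\nu(t-\tau'')\geq 1$, one simply bounds $(1-e^{-\nu(t-\tau'')})^4 \leq 1$ and uses the second branch $S^p\leq \exp(-p\delta_0 \nu^{-1}k^2(t-\tau''))$; here the decay is so strong (exponential in $\nu^{-2}$) that the bound is satisfied with ample room to spare.

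The main obstacle is purely bookkeeping: matching the powers of $|k|$, $\nu$, and $(t-\tau'')$ after extracting the polynomial envelope from $S^p$, and checking that the worst case among the two regimes gives exactly the claimed $\nu^{-1}$ scaling. Since the structural ingredients are identical to those in Lemma \ref{lem:propSMoment} (the elementary exponential inequality plus Lemma \ref{lem:propS}), no new ideas are required; the proof is a direct adaptation.
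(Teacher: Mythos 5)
Your approach matches the paper's in every structural respect: the same algebraic simplification $e^{-4\nu t}(e^{\nu\tau''}-e^{\nu t})^4 = (e^{\nu(\tau''-t)}-1)^4$, an elementary exponential inequality (you use $1-e^{-x}\le x$; the paper uses $|e^x-1|\le xe^x$ and absorbs the extra factor $e^{4\nu(t-\tau'')}$ into a slightly smaller power of $S$ — your route is marginally cleaner but equivalent), the split into $\nu(t-\tau'')\lessgtr 1$, and Lemma \ref{lem:propS}. Your treatment of the large-time regime is fine.

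The gap is in the sentence ``exchanges them for the desired power of $\nu^{-1}$'' — this is precisely the one nontrivial piece of arithmetic, and it does not produce $\nu^{-1}$. In the regime $\nu(t-\tau'')\le 1$ you must control $|k|^2(t-\tau'')^4\exp\bigl(-p\delta_0\nu|k|^2(t-\tau'')^3\bigr)$; applying $e^{-cx}\lesssim_q x^{-q}$ with $x=\nu|k|^2(t-\tau'')^3$ forces $q=4/3$ to kill the $(t-\tau'')$-dependence, yielding $|k|^{-2/3}\nu^{-4/3}$, hence $\nu^{-4/3}$ after $|k|\ge 1$ (equivalently, the maximum over $t-\tau''$ is attained at $t-\tau''\sim(\nu|k|^2)^{-1/3}$ with value $\sim|k|^{-2/3}\nu^{-4/3}$). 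This is consistent with the doubling pattern from Lemma \ref{lem:propSMoment}, which gives $\nu^{-2/3}$, so the exponent here should be $\nu^{-4/3}$; the stated $\nu^{-1}$ in the paper appears to be a misprint. You should record $\nu^{-4/3}$ explicitly and then observe that this weaker bound still suffices downstream, since it is multiplied by $\nu^2$ in \eqref{eq:M2Duhamel}, leaving $\nu^{2/3}\ll 1$.
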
 
\begin{proof} 
The lemma follows from Lemma \ref{lem:propS} and the following observations: 
\begin{align*}
\frac{e^{-4 \nu t}}{8\nu^4}\left(e^{\nu \tau''} - e^{\nu t}\right)^4  = \frac{1}{8\nu^4}\left(e^{\nu (\tau''- t)} - 1\right)^4 \leq \frac{1}{8\nu^4}\left(\nu (\tau''-t)\right)^4 e^{4 \nu (t-\tau'')}. 
\end{align*}
\end{proof} 
Along with Lemma \ref{lem:propSMoment}, we may improve \eqref{ineq:bootAM2} and \eqref{ineq:bootLinftAM2} by properly adjusting the bootstrap constants and choosing $\eps \nu^{-1/3} \ll 1$.
We omit the details for the sake of brevity. 

\section{Estimates on the distribution function} 
\subsection{Preliminary lemmas}
The following versions of Young's inequality occur frequently in the proof. These are easy variants of lemmas found in \cite{BMM13}; the proofs are omitted for brevity. 
\begin{lemma} \label{lem:YoungDisc}
\begin{itemize} 
\item[(a)] Let $f(k,\eta),g(k,\eta) \in L^2(\Integer^d \times \Real^d)$ and $\jap{k}^\sigma h(t,k) \in L^2(\Integer^d)$ for $\sigma > n/2$. Then, for any $t \in \Real$, 
\begin{align} 
\abs{\sum_{k,\ell \in \Integers_\ast^n} \int  f(k,\eta) h(t,\ell) g(k-\ell,\eta-\ell\frac{1-e^{-\nu t}}{\nu}) \, \dd \eta} \lesssim_{d,\sigma} \norm{f}_{L^2_{k,\eta} }\norm{g}_{L^2_{k,\eta} } \norm{\jap{k}^\sigma h(t)}_{L_k^2}. \label{ineq:L2L2L1}      
\end{align} 
\item[(b)] Let $f(k,\eta), \jap{k}^\sigma g(k,\eta) \in L^2(\Integer^d \times \Real^d)$ and $h(t,k) \in L^2(\Integer^d)$ for $\sigma > n/2$. Then, for any $t \in \Real$, 
\begin{align} 
\abs{\sum_{k,\ell \in \Integers_\ast^n} \int  f(k,\eta) h(t,\ell) g(k-\ell,\eta-\ell \frac{1-e^{-\nu t}}{\nu}) \, \dd \eta} \lesssim_{n,\sigma} \norm{f}_{L^2_{k,\eta}}\norm{\jap{k}^{\sigma}g }_{L^2_{k,\eta}} \norm{h(t)}_{L_k^2}.    \label{ineq:L2L1L2}      
\end{align} 
\end{itemize}
\end{lemma}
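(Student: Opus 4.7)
The plan is to reduce both estimates to a single discrete convolution inequality in the $k$ variable, exploiting the fact that the $\eta$-shift by $\ell(1-e^{-\nu t})/\nu$ is harmless because it is an isometry on $L^2_\eta$. Concretely, I would first apply Cauchy--Schwarz in $\eta$ inside the double sum,
\begin{align*}
\abs{\int f(k,\eta)\,g\!\left(k-\ell,\eta-\ell\tfrac{1-e^{-\nu t}}{\nu}\right)\dd\eta} \leq \norm{f(k,\cdot)}_{L^2_\eta}\,\norm{g(k-\ell,\cdot)}_{L^2_\eta},
\end{align*}
where the translation by $\ell(1-e^{-\nu t})/\nu$ in the second factor drops out because it is a pure $L^2_\eta$ isometry. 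Setting $F(k):=\norm{f(k,\cdot)}_{L^2_\eta}$, $G(k):=\norm{g(k,\cdot)}_{L^2_\eta}$, and $H(\ell):=\abs{h(t,\ell)}$, the left-hand side is then controlled by $\sum_{k,\ell} F(k)H(\ell)G(k-\ell) = \langle F,H\ast G\rangle_{\ell^2(\Integers^n)}$, a trilinear pairing of sequences on $\Integers^n$ (the restriction $k,\ell\in\Integers^n_\ast$ only deletes one term and can be bounded above by the full sum).

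From here, I would apply Cauchy--Schwarz on the $\ell^2_k$ pairing followed by the discrete Young inequality $\norm{a\ast b}_{\ell^2}\leq \norm{a}_{\ell^1}\norm{b}_{\ell^2}$, which gives two natural variants depending on which sequence is placed in $\ell^1$. For part (a) I would put $H$ in $\ell^1$: $\langle F,H\ast G\rangle \leq \norm{F}_{\ell^2}\norm{H}_{\ell^1}\norm{G}_{\ell^2}$, and for part (b) I would put $G$ in $\ell^1$: $\langle F,H\ast G\rangle\leq \norm{F}_{\ell^2}\norm{H}_{\ell^2}\norm{G}_{\ell^1}$.

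The remaining step is to convert the $\ell^1$ norm back into a weighted $\ell^2$ norm, using the Sobolev-type embedding $\norm{a}_{\ell^1(\Integers^n)}\leq C_\sigma\norm{\jap{\cdot}^\sigma a}_{\ell^2(\Integers^n)}$, which follows from Cauchy--Schwarz together with summability of $\jap{k}^{-2\sigma}$ when $\sigma>n/2$. Applied to $H$ this yields \eqref{ineq:L2L2L1}; applied to $G$ (noting that $\norm{\jap{k}^\sigma G}_{\ell^2_k}=\norm{\jap{k}^\sigma g}_{L^2_{k,\eta}}$ by Tonelli) this yields \eqref{ineq:L2L1L2}.

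There is no substantive obstacle here: the only conceptual point is to notice that the $\ell$-dependent $\eta$-translation contributes nothing to $L^2_\eta$-norm estimates, so the integral in $\eta$ decouples cleanly from the combinatorics in $(k,\ell)$ and the entire problem reduces to a standard convolution estimate on $\Integers^n$. The condition $\sigma>n/2$ enters only in the final $\ell^1\hookleftarrow\ell^2_\sigma$ embedding.
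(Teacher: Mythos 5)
Your proof is correct. The paper itself does not supply a proof here, stating only that these are ``easy variants of lemmas found in [BMM13]'' and omitting the argument for brevity; your reduction --- Cauchy--Schwarz in $\eta$ (with the translation dropping out as an $L^2_\eta$-isometry), reduction to a discrete trilinear form $\langle F, H\ast G\rangle_{\ell^2(\Integers^n)}$, discrete Young's inequality, and the $\ell^1(\Integers^n)\hookleftarrow \jap{\cdot}^{-\sigma}\ell^2(\Integers^n)$ embedding for $\sigma>n/2$ --- is exactly the standard argument those lemmas rest on, and it is complete and rigorous as written.
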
 

\subsection{High norm estimate}
In this section we prove that the constant in \eqref{boot:hineq} can be improved. Hence, fix $j$, $0 \leq j \leq 3$. 
Let $\alpha$ be an arbitrary multi-index such that $\abs{\alpha} \leq m+1+j$ and set $\gamma = \sigma+1/2-j$. 
Write, 
\begin{align}
A(t,k,\eta) & = \brak{t}^{-p/2} A_{\sigma+1/2-j,\delta_1}\mathbf{1}_{k \neq 0} +  \brak{t}^{-p/2} A_{\sigma+1/2-j,0}\mathbf{1}_{k = 0}. 
\end{align}
From the definition of $A$ (see \eqref{def:Asc}) and \eqref{eq:fhat}, we have the following 
\begin{align}
\frac{1}{2}\frac{d}{dt}\left(e^{-2\abs{\alpha}\nu t} \norm{A (v^\alpha f)}_{2}^2\right) & = \delta_1 \nu^{1/3} e^{-2\abs{\alpha}\nu t} \norm{A(v^\alpha f_{\neq})}_2^2  -p \frac{t}{\brak{t}^2} e^{-2\abs{\alpha}\nu t} \norm{A (v^\alpha f)}_{2}^2 \nonumber  \\ & \quad - e^{-2\abs{\alpha}\nu t} \norm{\sqrt{-\partial_t M M} A (v^\alpha f_{\neq})}_{2}^2 \nonumber \\
& \quad - \nu \abs{\alpha} e^{-2\abs{\alpha}\nu t} \norm{A (v^\alpha f)}_{2}^2 + e^{-2\abs{\alpha} \nu t} \brak{ A (v^\alpha f), A (v^\alpha \partial_t f)}_2\nonumber  \\
& := \delta_1 \nu^{1/3} e^{-2\abs{\alpha}\nu t} \norm{A(v^\alpha f)}_2^2 + \cD_{p} + \cD_M + \cD_{\alpha} + \nu e^{-2\abs{\alpha}\nu t} \brak{ A (v^\alpha f),  A (v^\alpha \partial_{vv}^t f)}_2 \nonumber \\
 & \quad + e^{-2\abs{\alpha}\nu t} \brak{ A (v^\alpha f), A (v^\alpha E\cdot \partial_v^t \mu)}_2 + e^{-2\abs{\alpha}\nu t} \brak{ A (v^\alpha f), A (v^\alpha E\cdot \partial_v^t f)}_2 \nonumber \\ 
& \quad  + \nu e^{-2\abs{\alpha}\nu t} \brak{ A (v^\alpha f), A (v^\alpha C_{\mu})}_2 + \nu e^{-2\abs{\alpha}\nu t} \brak{ A (v^\alpha f), A (v^\alpha C_{h})}_2 \nonumber \\
& := \delta_1 \nu^{1/3} e^{-2\abs{\alpha}\nu t} \norm{A(v^\alpha f)}_2^2 + \cD_{p} + \cD_M + \cD_\alpha + \nu \mathcal{D} + \mathcal{L} + \mathcal{N} + \nu J_\mu + \nu J_h. \label{def:hifEvo}
\end{align}

\subsubsection{Dissipation term} \label{sec:HiDiss}
Here we consider the $\mathcal{D}$ contribution in \eqref{def:hifEvo}. 
First, we expand into several contributions, 
\begin{align*}
\nu \mathcal{D} & = -\nu e^{-2\abs{\alpha}\nu t} \norm{\partial_v^t A(v^\alpha f)}_2^2 \\ & \quad + \nu e^{-2\abs{\alpha}\nu t} \sum_{\beta \leq \alpha: \abs{\beta} = \abs{\alpha} - 1}\sum_{k \in \Integer^n} \int A D^\alpha_\eta \overline{\hat{f}}(k,\eta) A(k,\eta) D^{\alpha-\beta}_\eta \left(\eta e^{\nu t} - k \frac{e^{-\nu t} - 1}{\nu}\right)^2 D^{\beta}_\eta\hat{f}(k,\eta) \dd \eta \\ 
&  \quad + \nu e^{-2\abs{\alpha}\nu t} \sum_{\beta \leq \alpha: \abs{\beta} = \abs{\alpha}-2}\sum_{k \in \Integer^n} \int A D^\alpha_\eta \overline{\hat{f}}(k,\eta) A(k,\eta) D^{\alpha-\beta}_\eta \left(\eta e^{\nu t} - k \frac{e^{-\nu t} - 1}{\nu}\right)^2 D^{\beta}_\eta \hat{f}(k,\eta) \dd \eta \\ 
& = -\nu\norm{\partial_v^t A(v^\alpha f_{\neq})}_2^2  + \sum_{j=1}^2 \mathcal{D}_j.
\end{align*}
By \eqref{ineq:dotMMAbd}, there holds for all $\delta_1 > 0$ sufficiently small
\begin{align*}
-\frac{\nu}{2} e^{-2\abs{\alpha}\nu t} \norm{\partial_v^t A(v^\alpha f_{\neq})}_2^2 - \frac{1}{2}\cD_M \leq -\delta_1 \nu^{1/3}\norm{A(v^\alpha f_{\neq})}_{2}^2. 
\end{align*}
This cancels the $+\delta_1 \nu^{1/3} e^{-2\abs{\alpha}\nu t} \norm{A(v^\alpha f_{\neq})}_2^2$ term in \eqref{def:hifEvo} and is what allows to deduce the rapid exponential decay rate for spatially dependent modes. 

Next, we are concerned with estimating the error terms $\cD_1$ and $\cD_2$. 
First, note that if $\abs{\alpha} = 0$ then neither $\cD_1$ nor $\cD_2$ are present. 
Consider $\mathcal{D}_1$. By Cauchy-Schwarz, 
\begin{align*}
\abs{\cD_1} \lesssim \nu e^{-2\abs{\alpha}\nu t} \sum_{\beta \leq \alpha: \abs{\beta} = \abs{\alpha} - 1}\norm{A (v^\alpha f)}_{2} K_{\abs{\alpha}-1} \left(e^{\nu t} \frac{1}{K_{\abs{\alpha}-1}} \norm{\partial_v^t A (v^{\alpha-\beta} f)}_2\right),
\end{align*}
and 
\begin{align*}
\abs{\cD_2} \lesssim \nu e^{-2\abs{\alpha}\nu t} \sum_{\beta \leq \alpha: \abs{\beta} = \abs{\alpha}- 2} \norm{A (v^\alpha f)}_{2} K_{\abs{\alpha}-2} \left(e^{2\nu t} \frac{1}{K_{\abs{\alpha}-2}} \norm{ A (v^{\alpha-\beta} f)}_2 \right). 
\end{align*}
Hence, for some constant $C$ which can be taken depending only on $\sigma$ and $n$, 
\begin{align*}
\abs{\cD_1} \leq \frac{1}{4}\nu e^{-2\abs{\alpha}\nu t}\norm{A (v^\alpha f)}_{2}^2 +  C K_{\abs{\alpha}-1}\nu e^{2\abs{\alpha}t+2\nu t} \sum_{\beta \leq \alpha: \abs{\beta} = \abs{\alpha} - 1} \frac{1}{K_{\abs{\alpha}-1}} \norm{\partial_v^t A (v^{\alpha-\beta} f)}_2^2. 
\end{align*}
The first term is absorbed by $\cD_\alpha$. The latter term is consistent with Proposition \ref{prop:boot} by choosing  $K_{\abs{\alpha}} \gg K_{\abs{\alpha}-1}$ (depending only on $C$). 
The $\cD_2$ term is similarly controlled  via:
\begin{align*}
\abs{\cD_2} \leq \frac{1}{4}\nu e^{-2\abs{\alpha}\nu t}\norm{A (v^\alpha f)}_{2}^2 +  C K_{\abs{\alpha}-2}\nu e^{2\abs{\alpha}t+4\nu t} \sum_{\beta \leq \alpha: \abs{\beta} = \abs{\alpha} - 2} \frac{1}{K_{\abs{\alpha}-2}} \norm{A (v^{\alpha-\beta} f)}_2^2. 
\end{align*}
As above, this is consistent Proposition \ref{prop:boot} by choosing  $K_{\abs{\alpha}} \gg K_{\abs{\alpha}-2}$ (depending only on $C$). 

\subsubsection{Linear collisionless term} \label{sec:HiLinCless}
Write, 
\begin{align*}
\mathcal{L} & =  e^{-2\abs{\alpha}\nu t}\sum_{k\in \Integer^n_\ast} \int A D_\eta^\alpha \overline{\widehat{f}}(k,\eta) A(t,k,\eta) \widehat{\rho}(t,k) \widehat{W}(k) \\ & \quad\quad \times k \cdot D_\eta^\alpha \left(\left(\eta e^{\nu t} - k \frac{1-e^{\nu t}}{\nu}\right)\widehat{\mu}\left(e^{\nu t}\eta - k\frac{e^{\nu t} - 1}{\nu}\right) \right)\dd \eta. 
\end{align*}
Using
\begin{align*}
\abs{\eta} \leq \abs{k \frac{1-e^{-\nu t}}{\nu}} + \abs{\eta-k \frac{1-e^{-\nu t}}{\nu}} \leq \abs{kt} + \abs{\eta-k \frac{1-e^{-\nu t}}{\nu}}, 
\end{align*}
we have,
\begin{align*}
\abs{\mathcal{L}} & \lesssim \brak{t}^{-p/2} e^{-2\abs{\alpha}\nu t}\sum_{k \in \Integers_\ast^n} \int A \abs{D_\eta^\alpha \widehat{f}(k,\eta)} \left(\abs{kt} + \abs{\eta-k\frac{1-e^{-\nu t}}{\nu}}\right) e^{\delta_1 \nu^{1/3}t} \brak{k,kt}^{\gamma} \abs{\widehat{\rho}(t,k)} \\ & \quad\quad \times \abs{k\widehat{W}(k)} \brak{\eta-k \frac{1-e^{-\nu t}}{\nu}}^{\gamma} \abs{D_\eta^\alpha \left(\left(\eta e^{\nu t} - k \frac{1-e^{\nu t}}{\nu}\right)\widehat{\mu}\left(e^{\nu t}\eta - k\frac{e^{\nu t} - 1}{\nu}\right) \right)} \dd \eta. 
\end{align*} 
By Lemma \ref{lem:YoungDisc} there holds for some constant $C = C(\sigma,j,n) >0$,  
\begin{align*}
\abs{\mathcal{L}} & \lesssim \brak{t}^{-p/2+1}e^{-\abs{\alpha}\nu t}\norm{A(v^\alpha f)}_2 \norm{\abs{\grad_x}^{1/2}\rho(t)}_{\cM^{\sigma,\delta_1}} \\
& \leq \frac{1}{10} \brak{t}^{-2} e^{-2\abs{\alpha}\nu t}\norm{A(v^\alpha f)}_2^2 + C\brak{t}^{-4-p} \norm{\abs{\grad_x}^{1/2}\rho(t)}_{\cM^{\sigma,\delta}}^2. 
\end{align*}
This is consistent with Proposition \ref{prop:boot} provided $p \geq 4$ provided we choose $K_{Hj} \gg K_{MH0}$. 

\subsubsection{Nonlinear collisionless term} \label{sec:HiNLCless}
The dissipation is too weak to control this nonlinear term directly using only $\eps \ll \nu^{1/3}$. 
Therefore, we need to use collisionless energy estimate techniques for transport equations, which amounts to variations of treatments that appear in \cite{BMM13,Bedrossian16} adapted to include $M(t,k,\eta)$ and the rapid decay into the norm. 
First, we single out the term with the highest derivatives in $\eta$: 
\begin{align*} 
\mathcal{N} & = -\sum_{k,\ell \in \Integer^n} e^{-2\abs{\alpha}\nu t} \int A D_\eta^\alpha \overline{\hat{f}} A \widehat{E}(t,\ell) \partial_v^t D_\eta^\alpha \hat{f}\left(t,k-\ell, \eta- \ell \frac{1-e^{-\nu t}}{\nu}\right) \dd \eta \\ 
& \quad -\sum_{\beta \leq \alpha : \abs{\beta} =1}\sum_{k,\ell \in \Integer^n} e^{-2\abs{\alpha}\nu t} \int A D_\eta^\alpha \overline{\hat{f}} A \widehat{E}(t,\ell) e^{\nu t} D_\eta^{\alpha-\beta} \hat{f}\left(t,k-\ell, \eta- \ell \frac{1-e^{-\nu t}}{\nu}\right) \dd \eta \\ 
& = \mathcal{N}_M + \mathcal{N}_{P}. 
\end{align*} 
Consider the main term $\mathcal{N}_M$, and employ a classical commutator trick and frequency decomposition:  
\begin{align*} 
\mathcal{N}_{M} &=  \sum_{k,\ell \in \Integer^n} e^{-2\abs{\alpha}\nu t} \int A D_\eta^\alpha \overline{\hat{f}} \left(A(k,\eta) - A(k-\ell,\eta - \ell \frac{1-e^{-\nu t}}{\nu})\right)\hat{E}(t,\ell) \\ & \quad\quad \cdot \; \widehat{\partial_v^t} D_\eta^\alpha \hat{f}\left(t,k-\ell, \eta- \ell \frac{1-e^{-\nu t}}{\nu}\right) \dd \eta \\ 
& = \sum_{k,\ell \in \Integer^n} e^{-2\abs{\alpha}\nu t} \int \mathbf{1}_{\abs{\ell, \ell \frac{1-e^{-\nu t}}{\nu}} \leq \abs{k-\ell, \eta - \ell \frac{1-e^{-\nu t}}{\nu}}} A D_\eta^\alpha \overline{\hat{f}} \left(A(k,\eta) - A(k-\ell,\eta - \ell \frac{1-e^{-\nu t}}{\nu})\right) \\ & \quad\quad \times \hat{E}(t,\ell) \cdot \widehat{\partial_v^t} D_\eta^\alpha \hat{f}\left(t,k-\ell, \eta- \ell \frac{1-e^{-\nu t}}{\nu}\right) \dd \eta \\ 
& \quad + \sum_{k,\ell \in \Integer^n} e^{-2\abs{\alpha}\nu t} \int \mathbf{1}_{\abs{\ell, \ell \frac{1-e^{-\nu t}}{\nu}} \geq \abs{k-\ell, \eta - \ell \frac{1-e^{-\nu t}}{\nu}}} A D_\eta^\alpha \overline{\hat{f}} \left(A(k,\eta) - A(k-\ell,\eta - \ell \frac{1-e^{-\nu t}}{\nu})\right) \\ & \quad\quad \times \hat{E}(t,\ell) \cdot \widehat{\partial_v^t} D_\eta^\alpha \hat{f}\left(t,k-\ell, \eta- \ell \frac{1-e^{-\nu t}}{\nu}\right) \dd \eta \\ 
& = \mathcal{N}_{LH} + \mathcal{N}_{HL}. 
\end{align*}
Consider first the contribution from $\mathcal{N}_{LH}$.
Write 
\begin{align*}
D(t,k) = \mathbf{1}_{k=0} + \mathbf{1}_{k\neq 0} e^{\delta_1 \nu^{1/3}t}.
\end{align*}
Next, expand
\begin{align}
A(k,\eta) - A(k-\ell,\eta - \ell \frac{1-e^{-\nu t}}{\nu}) & = \nonumber \\ & \hspace{-6cm} \left(\brak{k,\eta}^{\gamma} - \brak{k-\ell,\eta - \ell \frac{1-e^{-\nu t}}{\nu}}^{\gamma}\right)M(t,k,\eta)D(t,k)  \nonumber \\
& \hspace{-6cm} \quad + \brak{k-\ell,\eta - \ell \frac{1-e^{-\nu t}}{\nu}}^{\gamma}\left(M(t,k,\eta) - M(t,k-\ell,k-\ell,\eta - \ell \frac{1-e^{-\nu t}}{\nu})\right)D(t,k) \nonumber \\ 
& \hspace{-6cm} \quad + \brak{k-\ell,\eta - \ell \frac{1-e^{-\nu t}}{\nu}}^{\gamma}M(t,k-\ell,k-\ell,\eta - \ell \frac{1-e^{-\nu t}}{\nu})\left(D(t,k) - D(t,k-\ell)\right). \label{ineq:trans0}
\end{align}
On the support of the integrand in $\cN_{LH}$, there holds, 
\begin{align}
\abs{\brak{k,\eta}^{\gamma} - \brak{k-\ell,\eta - \ell \frac{1-e^{-\nu t}}{\nu}}^{\gamma}} \lesssim \brak{k-\ell,\eta-\ell \frac{1-e^{-\nu t}}{\nu}}^{\gamma-1}\abs{\ell, \ell \frac{1-e^{-\nu t}}{\nu}}. \label{ineq:trans1}
\end{align}
By  Lemma \ref{lem:propM}, there holds 
\begin{align}
\abs{M(t,k,\eta) - M(t,k-\ell,k-\ell,\eta - \ell \frac{1-e^{-\nu t}}{\nu})} & \lesssim \frac{\brak{t}^2}{\nu^{1/3}\max(\brak{\eta},\brak{k})} \brak{\ell,\ell \frac{1-e^{-\nu t}}{\nu}}^3.  \label{ineq:trans2}
\end{align}
Similarly, using that $\ell \neq 0$ (and hence, it cannot be that both $k$ and $k-\ell$ are zero), there holds 
\begin{align}
\abs{D(t,k) - D(t,k-\ell)} \leq \mathbf{1}_{\abs{k}\abs{k-\ell} = 0} \abs{e^{\delta_1 \nu^{1/3}t} - 1} \leq  \mathbf{1}_{\abs{k}\abs{k-\ell} = 0} \delta_1 \nu^{1/3} t e^{\delta_1 \nu^{1/3}t}. \label{ineq:trans3}
\end{align}
When $\abs{k}\abs{k-\ell} \neq 0$ then the commutator involving $D(t,k) - D(t,k-\ell)$ vanishes. In the case $\abs{k}\abs{k-\ell} = 0$ then this commutator does not gain regularity and hence we need to use the dissipation. Hence, applying the decomposition \eqref{ineq:trans0} and the multiplier estimates \eqref{ineq:trans0}, \eqref{ineq:trans1}, and \eqref{ineq:trans2}, it follows from Lemma \ref{lem:YoungDisc} (and $\beta > n/2$) that, 
\begin{align*}
\abs{\mathcal{N}_{LH}} & \lesssim \brak{t}^{-\beta/2+1}e^{-(\delta-\delta_1) \nu^{1/3}t} e^{-2\abs{\alpha}\nu t} \norm{A v^\alpha f(t)}_2^2 \norm{\rho(t)}_{\cM^{\beta,\delta}} \\ 
& \quad + \nu^{-1/3} \brak{t}^{-\beta/2+3}e^{-(\delta-\delta_1) \nu^{1/3}t} e^{-2\abs{\alpha}\nu t} \norm{A v^\alpha f(t)}_2^2 \norm{\rho(t)}_{\cM^{\beta,\delta}} \\ 
& \quad + \nu^{1/3} \brak{t}^{-\beta/2+2}e^{-(\delta-\delta_1) \nu^{1/3}t} e^{-2\abs{\alpha}\nu t} \norm{A v^\alpha f(t)}_2 \norm{\partial_v^t A(v^\alpha f)}_2 \norm{\rho(t)}_{\cM^{\beta,\delta}} \\ 
& \lesssim \nu^{-1/3} \brak{t}^{-\beta/2+3} e^{-2\abs{\alpha}\nu t} \norm{A v^\alpha f(t)}_2^2 \norm{\rho(t)}_{\cM^{\beta,\delta}} \\ & \quad +  \nu \norm{\rho(t)}_{\cM^{\beta,\delta}} e^{-2\abs{\alpha}\nu t} \norm{\partial_v^t A(v^\alpha f)}_2^2. 
\end{align*}
This is consistent with Proposition \ref{prop:boot} by choosing $\eps \nu^{-1/3} < c_0$ sufficiently small. 

Next, turn to $\mathcal{N}_{HL}$. 
This contribution can be treated in a manner analogous to the collisionless papers \cite{BMM13,BMM16,Bedrossian16}. 
By the frequency localization, Lemma \ref{lem:YoungDisc}, and \eqref{boot:lowneq} (recall \eqref{def:B}), 
\begin{align*}
\abs{\mathcal{N}_{HL}} & \lesssim \sum_{k,\ell \in \Integer^n} e^{-2\abs{\alpha}\nu t} \int \abs{AD_\eta^\alpha \hat{f}(t,k,\eta)} \frac{1}{\abs{\ell}} A\left(t,\ell,\ell\frac{1-e^{-\nu t}}{\nu}\right) \\ & \quad\quad \times \abs{\hat{\rho}(t,\ell) \widehat{\partial_v^t D_\eta^\alpha f}\left(t,k-\ell, \eta- \ell \frac{1-e^{-\nu t}}{\nu}\right)} \dd \eta \\
& \lesssim \sum_{k,\ell \in \Integer^n} e^{-2\abs{\alpha}\nu t - (\delta - \delta_1)\nu^{1/3}t} \int \abs{AD_\eta^\alpha \hat{f}(t,k,\eta)} \\ & \quad\quad \times \abs{\ell}^{1/2} \left(\frac{1-e^{-\nu t}}{\nu}\right)\abs{e^{\nu t}\eta - (k-\ell)\frac{e^{\nu t} - 1}{\nu}} \abs{B\hat{\rho}(t,\ell) \widehat{D_\eta^\alpha f}\left(t,k-\ell, \eta- \ell \frac{1-e^{-\nu t}}{\nu}\right)} \dd \eta \\
& \lesssim e^{-(\delta-\delta_1) \nu^{1/3}t + \nu t} \brak{t}^{2-p/2} e^{-2\abs{\alpha}\nu t} \norm{A(v^\alpha f)}_2\norm{\rho}_{\cM^{\sigma,\delta}} \norm{\brak{\grad}^{-3}A(v^\alpha f)}_2 \\
& \lesssim \eps \brak{t}^{-2} e^{-2\abs{\alpha}\nu t} \norm{A(v^\alpha f)}_2^{2} + \brak{t}^{6-p} \eps \norm{\rho}_{\cM^{\sigma,\delta}}^2, 
\end{align*}
which is consistent provided $p \geq 6$,  $\eps \ll 1$, and $\nu \ll 1$ by the bootstrap hypotheses. Note that we used $\delta > \delta_1$ here.  
This completes the treatment of $\mathcal{N}_M$. 

Turn next to $\mathcal{NL}_P$. 
By arguments used above, 
\begin{align*}
\abs{\mathcal{N}_P} & \lesssim e^{-2\abs{\alpha}\nu t}e^{-(\delta-\delta_1)\nu^{1/3}t}\norm{\rho(t)}_{\cM^{\sigma,\delta}}\norm{A(v^\alpha f(t))}_2\sum_{\beta \leq \alpha:\abs{\beta} = 1} \norm{A(v^{\alpha-\beta}f(t))}_2 e^{\nu t} \\ 
& \lesssim e^{-\abs{\alpha} \nu t} e^{-\frac{1}{2}(\delta-\delta_1)\nu^{1/3}t}\norm{\rho(t)}_{\cM^{\sigma,\delta}}\norm{A(v^\alpha f(t))}_2 \norm{f(t)}_{F^{\sigma+1/2-j,\delta_1}_{m+1+j}}. 
\end{align*}
which is consistent with Proposition \ref{prop:boot} for $\eps \nu^{-1/6} \ll 1$ after summing over $\alpha$. 

\subsubsection{$\cC_\mu$ collision term} \label{sec:HicCmu}
Consider next the $\cC_\mu$ collision term:
\begin{align*}
\nu C_\mu & = \nu e^{-2\abs{\alpha}\nu t} \int  A(t,\grad) (v^\alpha f) A(t,\grad)\left( M_T v^\alpha \Delta_v \mu - v^\alpha M_1 \cdot \grad_v\mu \right) \dd x \, \dd v 
 = \nu\sum_{j=1}^2 C_{\mu;j}. 
\end{align*}
The terms $C_{\mu;1}$ and $C_{\mu;2}$ are similar and hence it suffices to consider  $C_{\mu;1}$.
First, separate zero and non-zero mode contributions: 
\begin{align*}
\nu C_{\mu;1} & = \nu e^{-2\abs{\alpha}\nu t}\int AD_\eta^\alpha \overline{\hat{f}}(t,0,\eta) A(t,0,\eta) \widehat{M_T}(t,0) D_\eta^\alpha \left(\widehat{(\Delta_v \mu)}(\eta e^{\nu t})\right) \dd \eta \\ 
 & \quad + \nu e^{-2\abs{\alpha}\nu t} \sum_{k \in \Integer^n_\ast}\int A\overline{\hat{f}}(t,k,\eta) A(t,k,\eta) \widehat{M_T}(t,k) D_\eta^\alpha \left(\widehat{(\Delta_v \mu)}(\eta e^{\nu t} - k\frac{e^{\nu t} - 1}{\nu})\right) \dd \eta \\ 
& = \nu C_{\mu;1,0} + \nu C_{\mu;1,\neq}. 
\end{align*}
Using, $\abs{\eta} \leq \abs{k\frac{1-e^{-\nu t}}{\nu}} + e^{-\nu t}\abs{e^{\nu t} \eta - k\frac{e^{\nu t}-1}{\nu}}$, we have by Cauchy-Schwarz (using $\abs{k \frac{1-e^{-\nu t}}{\nu}} \leq \abs{kt}$),
\begin{align*}
\nu \abs{C_{\mu;1,\neq}} \lesssim \nu e^{-\abs{\alpha}\nu t} \brak{t}^{(-p+1)/2} \norm{A(v^\alpha f)}_2 \norm{\abs{\grad}^{1/2} (M_T)_{\neq}(t)}_{\cM^{\sigma,\delta}}, 
\end{align*}
which is consistent with Proposition \ref{prop:boot} by choosing $K_{Hj} \gg K_{MH2}$ and $p>1$. 
For $C_{\mu;1,0}$, 
\begin{align*}
\nu \abs{C_{\mu;1,0}} \lesssim \nu e^{-\abs{\alpha}\nu t} \brak{t}^{-p/2} \norm{A(v^\alpha f)}_2 \abs{(M_T)_0(t)}, 
\end{align*}
which is consistent with Proposition \ref{prop:boot} by Lemma \ref{lem:uTctrls}. 

\subsubsection{$\cC_h$ collision term} \label{sec:HicCh}
Subdivide the contributions of $\cC_h$ into the four natural contributions in \eqref{def:cCh}, inducing a similar decomposition of the analogous term in the energy estimate as $\nu \sum_{j=1}^4 J_{\cC_h;j}$. 

Consider the contribution of the first term $J_{\cC_h;1}$ and divide into two distinct contributions: 
\begin{align*}
\nu J_{\cC_h;1} & = \nu e^{-2\abs{\alpha}\nu t} \sum_{k \in \Integers^n}  \int A D_\eta^\alpha \overline{\hat{f}}  A(t,k,\eta) \hat{\rho}(t,k)  D_\eta^\alpha \left(\widehat{(\Delta_v^t f)}(t,0, \eta  - k \frac{1- e^{-\nu t}}{\nu})\right) \dd \eta \\ 
& \quad + \nu e^{-2\abs{\alpha}\nu t} \sum_{k,\ell \in \Integers^n}  \int A D_\eta^\alpha \overline{\hat{f}}  A(t,k,\eta) \hat{\rho}(t,\ell)  D_\eta^\alpha \left(\widehat{(\Delta_v^t f)}(t,k-\ell, \eta  - \ell \frac{1- e^{-\nu t}}{\nu})\right) \dd \eta \\ 
& = \nu J_{\cC_h;1}^0 + \nu J_{\cC_h;1}^{\neq}.
\end{align*}
Consider first the interaction with zero frequency modes: 
\begin{align*}
\nu J_{\cC_h;1}^0 & = -\nu e^{-2\abs{\alpha}\nu t} \sum_{k \in \Integers^n}  \int A D_\eta^\alpha \overline{\hat{f}}  A(t,k,\eta) \hat{\rho}(t,k) \abs{e^{\nu t} \eta - k \frac{e^{\nu t}- 1}{\nu}}^2 \widehat{D_\eta^\alpha f}(t,0, \eta  - k \frac{1- e^{-\nu t}}{\nu}) \dd \eta  \\ 
& \quad - \nu e^{-2\abs{\alpha}\nu t + \nu t} \sum_{\beta \leq \alpha : \abs{\beta} = 1}\sum_{k \in \Integers^n}  \int A D_\eta^\alpha \overline{\hat{f}}  A(t,k,\eta) \hat{\rho}(t,k) D^\beta_\eta \eta \left(e^{\nu t} \eta - k \frac{e^{\nu t}- 1}{\nu}\right) \\ & \quad\quad \times \widehat{D_\eta^{\alpha-\beta} f}(t,0, \eta  - k \frac{1- e^{-\nu t}}{\nu}) \dd \eta  \\ 
& \quad - \nu e^{-2\abs{\alpha}\nu t + 2\nu t} \sum_{\beta \leq \alpha : \abs{\beta} = 2}\sum_{k \in \Integers^n}  \int A D_\eta^\alpha \overline{\hat{f}}  A(t,k,\eta) \hat{\rho}(t,k) \widehat{D_\eta^{\alpha-\beta} f}(t,0, \eta  - k \frac{1- e^{-\nu t}}{\nu}) \dd \eta \\ 
& = \nu\sum_{j=1}^3J_{\cC_h;1,j}^0. 
\end{align*}
Using $\brak{k,\eta} \lesssim \brak{k,k\frac{1-e^{-\nu t}}{\nu}} + \brak{\eta - k \frac{1-e^{-\nu t}}{\nu}}$ and $\frac{1-e^{-\nu t}}{\nu} \leq t$ we have (using also Cauchy-Schwarz), 
\begin{align*}
\abs{\nu J_{\cC_h;1,j}^0} & \lesssim \nu e^{-2\abs{\alpha} \nu t + \nu t}\brak{t}^{1/2}\norm{\partial_v^t A(v^\alpha f)}_2 e^{(\delta_1-\delta)\nu^{1/3}t} \norm{\abs{\grad}^{1/2}\rho}_{\cM^{\sigma,\delta}}\norm{\brak{\grad}^{-5}A(v^\alpha f)}_{2} \\ 
& \quad + \nu e^{-2\abs{\alpha} \nu t}\norm{\partial_v^t A(v^\alpha f)}_2^2 e^{(\delta_1-\delta)\nu^{1/3}t} \norm{\rho}_{\cM^{\beta,\delta}} \\ 
& \lesssim \nu^{5/6} \eps e^{-\abs{\alpha} t}\norm{\partial_v^t A(v^\alpha f)}_2 \norm{\abs{\grad}^{1/2}\rho}_{\cM^{\sigma,\delta}} + \nu \eps e^{-2\abs{\alpha} \nu t}\norm{\partial_v^t A(v^\alpha f)}_2^2, 
\end{align*}
where the last line followed from using $\delta > \delta_1$. 
This is consistent with Proposition \ref{prop:boot} for $\eps$ and $\nu$ sufficiently small. 
The remaining terms $J_{\cC_h;1,2}^0$ and $J_{\cC_h;1,3}^0$ are treated with an easy variant. 

Consider next the interaction with spatially dependent modes $J_{\cC_h;1}^{\neq}$, 
\begin{align*}
\nu J_{\cC_h;1}^{\neq} & = -\nu e^{-2\abs{\alpha}\nu t} \sum_{k,\ell \in \Integers^n} \int A D_\eta^\alpha \overline{\hat{f}}  A(t,k,\eta) \hat{\rho}(t,\ell)  \abs{e^{\nu t} \eta - k \frac{e^{\nu t}-1}{\nu}}^2 D_\eta^\alpha \hat{f}(t,k-\ell, \eta  - \ell \frac{1 - e^{-\nu t}}{\nu}) \dd \eta \\ 
& \quad - 2 e^{\nu t}\sum_{\beta \leq \alpha: \abs{\beta} = 1} \nu e^{-2\abs{\alpha}\nu t} \sum_{k,\ell \in \Integers^n} \int A D_\eta^\alpha \overline{\hat{f}}  A(t,k,\eta) \hat{\rho}(t,\ell)  \\ & \quad\quad \times  D_\eta^\beta(\eta) \left(e^{\nu t} \eta - k \frac{e^{\nu t}-1}{\nu}\right) D_\eta^{\alpha-\beta} \hat{f}(t,k-\ell, \eta  - \ell \frac{1 - e^{-\nu t}}{\nu}) \dd \eta \\  
& \quad - 2e^{2\nu t}\sum_{\beta \leq \alpha: \abs{\beta} = 2} \nu e^{-2\abs{\alpha}\nu t} \sum_{k,\ell \in \Integers^n} \int A D_\eta^\alpha \overline{\hat{f}}  A(t,k,\eta) \hat{\rho}(t,\ell) D_\eta^{\alpha-\beta} \hat{f}(t,k-\ell, \eta  - \ell \frac{1 - e^{-\nu t}}{\nu}) \dd \eta \\ 
& = \sum_{j = 1}^3  \nu J_{\cC_h;1,j}.  
\end{align*}
Notice the structure which implies that $e^{\nu t} \eta - k\frac{e^{\nu t}-1}{\nu} = e^{\nu t}(\eta-\ell \frac{1-e^{-\nu t}}{\nu}) - (k-\ell)\frac{e^{\nu t}-1}{\nu}$. 
This is crucially important to controlling a potential loss of regularity here. 
Note 
\begin{align*}
\brak{k,\eta} & \lesssim \brak{\ell,\ell\frac{1-e^{-\nu t}}{\nu}} +  \brak{k-\ell, \eta-\ell \frac{1-e^{-\nu t}}{\nu}}, \\
e^{\nu t}\abs{\eta - k \frac{1-e^{-\nu t}}{\nu}} & \lesssim e^{\nu t} \brak{t} \brak{k-\ell,\eta - \ell \frac{1-e^{-\nu t}}{\nu}},
\end{align*}
and further that $\brak{\ell,\ell \frac{1-e^{-\nu t}}{\nu}} \lesssim \brak{t} \brak{\ell}$. 
Therefore, from Lemma \ref{lem:YoungDisc} we have the following product type estimate for $\sigma + 1/2 -j > n/2+1$ (we are grouping one of the $\partial_v^t$'s with the leading factor), 
\begin{align*}
\nu \abs{J_{\cC_h;1,1}} & \lesssim \nu \brak{t} e^{-2\abs{\alpha}\nu t} \norm{\partial_v^t A(v^\alpha f)}_2 e^{- \frac{1}{2}\delta \nu^{1/3}t} \norm{\brak{\grad_x,\grad_x \frac{1-e^{-\nu t}}{\nu}}^{1/2} \rho}_{\cM^{\sigma,\delta}} \norm{\brak{\grad}^{-5}A(v^\alpha f)}_2 \\ 
& \quad  + \nu e^{-2\abs{\alpha}\nu t} \norm{\partial_v^t A(v^\alpha f)}_2 \brak{t}^{-\beta/2} e^{-\frac{1}{2}\delta \nu^{1/3}t} \norm{\rho}_{\cM^{\beta,\delta}} \norm{\partial_v^t A(v^\alpha f)}_2 \\ 
& \lesssim \nu \brak{t}^{3/2} e^{-\frac{1}{2}\delta \nu^{1/3}t} e^{-2\abs{\alpha}\nu t} \norm{\partial_v^t A(v^\alpha f)}_2  \norm{\abs{\grad_x}^{1/2} \rho}_{\cM^{\sigma,\delta}} \norm{\brak{\grad}^{-5}A(v^\alpha f)}_2 \\ 
& \quad  + \nu \brak{t}^{-\beta/2} e^{-\frac{1}{2}\delta \nu^{1/3}t} e^{-2\abs{\alpha}\nu t} \norm{\partial_v^t A(v^\alpha f)}_2^2  \norm{\rho}_{\cM^{\beta,\delta}}  \\
& \lesssim \nu^{1/2} \eps e^{-\abs{\alpha}\nu t} \norm{\partial_v^t A(v^\alpha f)}_2 \norm{\abs{\grad_x}^{1/2} \rho}_{\cM^{\sigma,\delta}} + \nu e^{-2\abs{\alpha} \nu t} \eps \norm{\partial_v^t A (v^\alpha f)}_2^2, 
\end{align*}
which is consistent with Proposition \ref{prop:boot} for $\eps$ sufficiently small. 
The terms $J_{\cC_h;1,2}$ and $J_{\cC_h;1,3}$ are treated with easier variants; we omit the treatment for the sake of brevity. 
This completes the treatment of $J_{\cC_h;1}$. 
The $J_{\cC_h;4}$ term is similar (as $(M_1)_0 = 0$) and is hence omitted for the sake of brevity.
The $J_{\cC_h;3}$ term is similar, except for the contribution of $(M_T)_0$.
However, this is treated by an easy variation of the above arguments, and is hence omitted for the sake of brevity.  

Turn next to $J_{\cC_h;2}$, where the main complication is the loss of velocity localization. 
This will require a variant of the commutator trick used to treat the collisionless transport term which permits to trade regularity for velocity localization.  
Begin by extracting the leading order term up to commutators: 
\begin{align*}
\nu J_{\cC_h;2} & = \nu e^{-2\abs{\alpha} \nu t -\nu t} \brak{A(v^\alpha f), A(t,\grad) \left(\rho(t,x+v\frac{1-e^{-\nu t}}{\nu}) v^\alpha \partial_v^t \cdot (vf)\right)} \\ 
& = \nu e^{-2\abs{\alpha} \nu t-\nu t} \brak{A(v^\alpha f), A(t,\grad) \left(\rho(t,x+v\frac{1-e^{-\nu t}}{\nu}) v^\alpha v \cdot \partial_v^t f\right)} \\ 
& \quad + n \nu e^{-2\abs{\alpha} \nu t} \brak{A(v^\alpha f), A(t,\grad) \left(\rho(t,x+v\frac{1-e^{-\nu t}}{\nu}) v^\alpha f \right)} \\  
& = \nu e^{-2\abs{\alpha} \nu t-\nu t} \brak{A(v^\alpha f), \rho(t,x+v\frac{1-e^{-\nu t}}{\nu})  v \cdot \partial_v^t A(v^\alpha f) } \\ 
& \quad + n \nu e^{-2\abs{\alpha} \nu t} \brak{A(v^\alpha f), A(t,\grad) \left(\rho(t,x+v\frac{1-e^{-\nu t}}{\nu}) v^\alpha f \right)} \\ 
& \quad + \nu e^{-2\abs{\alpha} \nu t-\nu t} \Big\langle A(v^\alpha f), A(t,\grad) \left(\rho(t,x+v\frac{1-e^{-\nu t}}{\nu}) v^\alpha v \cdot \partial_v^t f\right) \\ &  \quad\quad  - \rho(t,x+v\frac{1-e^{-\nu t}}{\nu})  v \cdot \partial_v^t A(v^\alpha f) \Big\rangle \\ 
& = \nu \sum_{j=1}^3 J_{\cC_h;2,j}. 
\end{align*}
To treat $J_{\cC_h;2,1}$ we integrate by parts to reduce by one power of $v$. 
From there, both $J_{\cC_h;2,1}$ and $J_{\cC_h;2,2}$ are straightforward using previously discussed techniques and are hence omitted for brevity: 
\begin{align*}
\nu J_{\cC_h;2,1} + \nu J_{\cC_h;2,2} & \lesssim \nu\norm{A(v^\alpha f)}_{2} \Big( \brak{t}^{-\beta/2}\norm{A(v^\alpha f)}_2 \norm{\rho(t)}_{\cM^{\beta,\delta}} \\ 
& \quad\quad  + \brak{t}^{1-p/2}\norm{\abs{\grad}^{1/2}\rho}_{\cM^{\sigma,\delta}}\norm{f}_{F^{\sigma+1/2-j-4,\delta_1}_{m+1+j}}\Big), 
\end{align*}
which is consistent with Proposition \ref{prop:boot} for $p > 3$ and $\nu$ sufficiently small. 
Hence, the remaining difficulty is the commutator term $J_{\cC_h;2,3}$. 
Expand on the Fourier side: 
\begin{align*} 
\nu J_{\cC_h;2,3} & = \nu e^{-2\abs{\alpha}\nu t-\nu t} \sum_{k,\ell \in \Integers^n} \int A D_\eta^\alpha \hat{f}(t,k,\eta) \hat{\rho}(t,\ell) \\& \quad\quad \times\left(A(t,k,\eta) D_\eta^\alpha \grad_\eta \cdot (\widehat{\partial_v^t f})(t,k-\ell, \eta-\ell \frac{1-e^{-\nu t}}{\nu}) - \grad_\eta \cdot \partial_v^t A D_\eta^\alpha \widehat{f}(t,k-\ell,\eta- \ell \frac{1-e^{-\nu t}}{\nu})\right) \dd \eta \\ 
& = \nu e^{-2\abs{\alpha}\nu t-\nu t} \sum_{k,\ell \in \Integers^n} \int A D_\eta^\alpha \hat{f}(t,k,\eta) \hat{\rho}(t,\ell) \\
& \quad\quad \times\left(-\sum_{\beta \leq \alpha: \abs{\beta} = 1} n e^{\nu t} A D_\eta^{\alpha} \widehat{f}(t,k-\ell,\eta- \ell \frac{1-e^{-\nu t}}{\nu}) - (\grad_\eta A) D_\eta^\alpha \partial_v^t \hat{f}(t,k-\ell,\eta-\ell \frac{1-e^{-\nu t}}{\nu}) \right) \dd \eta \\ 
& \quad + \nu e^{-2\abs{\alpha}\nu t -\nu t} \sum_{k,\ell \in \Integers^n} \int A D_\eta^\alpha \hat{f}(t,k,\eta) \hat{\rho}(t,\ell) \\
& \quad\quad \times\left(A(t,k,\eta) - A(t,k-\ell,\eta-\ell \frac{1-e^{-\nu t}}{\ell})\right)D_\eta^\alpha \grad_\eta \cdot (\widehat{\partial_v^t f}) \dd \eta \\  
& = \nu J_{\cC_h;2,3,a} + J_{\cC_h;2,3,b}. 
\end{align*}
The first term is treated using \eqref{ineq:detaM} to control $\grad_\eta A$ and the techniques used above to treat $J_{\cC_h;1}$. The details are omitted for brevity and we deduce 
\begin{align*}
\nu \abs{J_{\cC_h;2,3,a}} & \lesssim \nu e^{ -(\delta-\delta_1)\nu^{1/3}t} \norm{\rho(t)}_{\cM^{\beta,\delta}}\norm{A(v^\alpha f)}_{2} \left(\norm{f}_{F^{\sigma+1/2-j,\delta_1}_{m+1+j}} + \norm{f}_{\cD^{\sigma+1/2-j,\delta_1}_{m+1+j}}\right), 
\end{align*}
 which is consistent with Proposition \ref{prop:boot} for $\eps$ and $\nu$ sufficiently small. 
The $J_{\cC_h;1,3,b}$ term requires a more detailed treatment due to the inevitable loss of regularity in $\eta$, equivalently, a loss of localization in $v$. 
Begin by commuting the derivatives in $\eta$ with $\partial_v^t$: when one of the derivatives lands on $\partial_v^t$, the loss in $\eta$ is traded for a slight growth in time and this is overcome easily using $\rho$, hence, for some constant $C$: 
\begin{align*}
\nu J_{\cC_h;2,3,b} & \leq \nu e^{-2\abs{\alpha}\nu t -\nu t} \sum_{k,\ell} \int \partial_v^t A D_\eta^\alpha \hat{f}(t,k,\eta) \hat{\rho}(t,\ell) \\
& \quad\quad \times\left(A(t,k,\eta) - A(t,k-\ell,\eta-\ell \frac{1-e^{-\nu t}}{\ell})\right)D_\eta^\alpha \grad_\eta \widehat{f}(t,k-\ell,\eta-\ell\frac{1-e^{-\nu t}}{\nu}) \dd \eta \\ 
& \quad + C\nu e^{\nu t - (\delta - \delta_1)\nu^{1/3}t}\norm{A(v^\alpha f)}_2 \left(\brak{t}^{1-p/2} \norm{\abs{\grad}^{1/2}\rho}_{\cM^{\sigma,\delta}} \norm{f}_{F^{\sigma+1/2-3-j,\delta_1}_{m+1+j}} \right. \\ & \left. \quad\quad + \brak{t}^{-\beta/2}\norm{\rho}_{\cM^{\beta,\delta}}\norm{f}_{F^{\sigma+1/2-j,\delta_1}_{m+1+j}}\right). 
\end{align*}
Next consider the first term. 
When $\abs{k-\ell,\eta- \ell \frac{1-e^{-\nu t}}{\nu}} \approx \abs{k,\eta}$, we can apply the arguments used to treat $\cN_{LH}$ \S\ref{sec:HiNLCless}, specifically, 
\eqref{ineq:trans0}, \eqref{ineq:trans1}, \eqref{ineq:trans2}, and \eqref{ineq:trans3}. 
Hence the difference in the $A$'s will gain a power of $\eta$ provided $k \neq \ell$.  
In the case $k = \ell$, one either gains regularity or gains $\nu^{1/3} t$; in the latter case, we will have to use the contribution of the dissipation to regain the regularity. 
In the case that $\abs{k-\ell,\eta- \ell \frac{1-e^{-\nu t}}{\nu}} \not\approx \abs{k,\eta}$, then the derivatives are effectively landing on $\rho$, rather than on $f$.
Therefore, 
\begin{align*}
\nu \abs{J_{\cC_h;2,3,b}} & \lesssim \nu e^{(\delta_1 - \delta)\nu^{1/3}t}\norm{\partial_v^t A(v^\alpha f)}_2 \brak{t}^{1-p/2}\norm{\abs{\grad_x}^{1/2}\rho}_{\cM^{\sigma,\delta}}\norm{f}_{F^{\sigma + 1/2 -j - 3,\delta_1}_{m+2+j}}  \\
& \quad + \nu e^{(\delta_1 - \delta)\nu^{1/3}t}\norm{\partial_v^t A(v^\alpha f)}_2 \brak{t}^{-\beta/2}\norm{\rho}_{\cM^{\beta,\delta}} \\ & \quad\quad \times \left(\norm{f}_{F^{\sigma-1/2-j,\delta_1}_{m+2+j}} + \nu^{1/3} t \left(\norm{f}_{\cD^{\sigma-1/2 - j,\delta_1}_{m+2+j}} + \norm{f}_{F^{\sigma+1/2-j,\delta_1}_{m+1+j}}\right)\right) \\ 
& \quad + \nu e^{\frac{1}{2}(\delta_1 - \delta)\nu^{1/3}t}\norm{A(v^\alpha f)}_2 \\ & \quad\quad \times \left(\brak{t}^{1-p/2} \norm{\abs{\grad}^{1/2}\rho}_{\cM^{\sigma,\delta}} \norm{f}_{F^{\sigma+1/2-3-j,\delta_1}_{m+1+j}} + \brak{t}^{-\beta/2}\norm{\rho}_{\cM^{\beta,\delta}}\norm{f}_{F^{\sigma+1/2-j,\delta_1}_{m+1+j}}\right), 
\end{align*}
which is consistent with Proposition \ref{prop:boot} $\eps$ and $\nu$ sufficiently small. 
This completes the improvement of the high norm estimate \eqref{boot:hineq}. 

\subsection{Estimates for the low norm: $\sigma+1/2-j$ with $\sigma+1/2 - \textup{Floor}(n/2 + 2) \geq j \geq 4$}
In this section we improve the lower norm (but higher velocity localization) controls on the distribution function, \eqref{boot:lowneq} to remove the polynomial growth in time. 
The proof is similar to the high norm estimate \eqref{boot:hineq} with some minor alterations, which we sketch briefly below.  
As above, for simplicity write, $\gamma= \sigma+1/2-j$ and
\begin{align*}
A'(t,k,\eta) & = A_{\gamma,\delta_1}\mathbf{1}_{k \neq 0} +  A_{\gamma,0}\mathbf{1}_{k = 0}. 
\end{align*}
Let $\alpha$ be a multi-index with $\abs{\alpha} \leq m+1/2-j$. 
As in \S\ref{sec:HiDiss}, we have
\begin{align}
\frac{1}{2}\frac{d}{dt}\left(e^{-2\abs{\alpha}\nu t} \norm{A' (v^\alpha f)}_{2}^2\right)  & = \delta_1 \nu^{1/3} e^{-2\abs{\alpha}\nu t} \norm{A'(v^\alpha f_{\neq})}_2^2 - e^{-2\abs{\alpha}\nu t} \norm{\sqrt{-\partial_t M M} A' (v^\alpha f_{\neq})}_{2}^2 \nonumber \\
& \quad - \nu \abs{\alpha} e^{-2\abs{\alpha}\nu t} \norm{A' (v^\alpha f)}_{2}^2 + e^{-2\abs{\alpha} \nu t} \brak{ A' (v^\alpha f), A' (v^\alpha \partial_t f)}_2\nonumber  \\
& := \delta_1 \nu^{1/3} e^{-2\abs{\alpha}\nu t} \norm{A'(v^\alpha f)}_2^2 + \cD_M + \cD_{\alpha} + \nu e^{-2\abs{\alpha}\nu t} \brak{ A' (v^\alpha f),  A' (v^\alpha \partial_{vv}^t f)}_2 \nonumber \\
 & \quad + e^{-2\abs{\alpha}\nu t} \brak{ A' (v^\alpha f), A' (v^\alpha E\cdot \partial_v^t \mu)}_2 + e^{-2\abs{\alpha}\nu t} \brak{ A' (v^\alpha f), A' (v^\alpha E\cdot \partial_v^t f)}_2 \nonumber \\ 
& \quad  + \nu e^{-2\abs{\alpha}\nu t} \brak{ A' (v^\alpha f), A' (v^\alpha C_{\mu})}_2 + \nu e^{-2\abs{\alpha}\nu t} \brak{ A' (v^\alpha f), A' (v^\alpha C_{h})}_2 \nonumber \\
& := \delta_1 \nu^{1/3} e^{-2\abs{\alpha}\nu t} \norm{A'(v^\alpha f)}_2^2 + \cD_M + \cD_\alpha + \nu \mathcal{D} + \mathcal{L} + \mathcal{N} + \nu J_\mu + \nu J_h \label{def:MedfEvo}
\end{align} 

\subsubsection{Dissipation term}
The $\mathcal{D}$ contributions can be treated as in \S\ref{sec:HiDiss} and hence the treatment is omitted for brevity. 

\subsubsection{Linear collisionless term} \label{sec:LoLCless}
Analogous to \S\ref{sec:HiLinCless}, we have, now using $\sigma- \gamma > 3$ and Lemma \ref{lem:YoungDisc}, 
\begin{align*}
\abs{\mathcal{L}} & \lesssim \brak{t}^{-p/2} e^{-2\abs{\alpha}\nu t}\sum_k \int A' \abs{D_\eta^\alpha \widehat{f}(k,\eta)} \left(\abs{kt} + \abs{\eta-k\frac{1-e^{-\nu t}}{\nu}}\right) e^{\delta \nu^{1/3}t} \brak{k,kt}^{\gamma} \abs{\widehat{\rho}(t,k)} \\ & \quad\quad \times \abs{k\widehat{W}(k)} \brak{\eta-k \frac{1-e^{-\nu t}}{\nu}}^{\gamma} \abs{D_\eta^\alpha \left(\left(\eta e^{\nu t} - k \frac{1-e^{\nu t}}{\nu}\right)\widehat{\mu}\left(e^{\nu t}\eta - k\frac{e^{\nu t} - 1}{\nu}\right) \right)} \dd \eta \\ 
& \lesssim \brak{t} e^{-\abs{\alpha}\nu t} \norm{A' (v^\alpha f)}_2 \norm{\rho(t)}_{\cM^{\gamma,\delta}} \\ 
& \lesssim \brak{t}^{-4} e^{-2\abs{\alpha}\nu t} \norm{A' (v^\alpha f)}_2^2 + \norm{\rho(t)}^2_{\cM^{\sigma,\delta}}, 
\end{align*}
which is consistent with Proposition \ref{prop:boot} by choosing $K_{Lj} \gg K_{HM0}$. 

\subsubsection{Nonlinear collisionless term}
As in \S\ref{sec:HiNLCless}, we subdivide this term analogously as 
\begin{align*}
\mathcal{N} = \mathcal{N}_{LH} + \mathcal{N}_{HL} + \mathcal{N}_{P}. 
\end{align*}
The $\mathcal{N}_{LH}$ and $\mathcal{N}_{P}$ contributions are treated analogously and hence are omitted for brevity. 
As in the collisionless treatment of \cite{BMM13}, the $\mathcal{N}_{HL}$ is treated via a variant of the argument in \S\ref{sec:LoLCless}: 
\begin{align*}
\abs{\mathcal{N}_{HL}} & \lesssim e^{- \delta \nu^{1/3}t} \brak{t}^{2} e^{-2\abs{\alpha}\nu t} \norm{A'(v^\alpha f)}_2^2 \norm{\rho}_{\cM^{\sigma-3,\delta}} \\ 
& \lesssim \eps\brak{t}^{-2} e^{-2\abs{\alpha}\nu t} \norm{A'(v^\alpha f)}_2^{2} + \eps^{-1}\norm{\rho(t)}_{\cM^{\sigma,\delta}}^2 e^{-2\abs{\alpha}\nu t} \norm{A'(v^\alpha f)}_2^{2},
\end{align*}
which is consistent with Proposition \ref{prop:boot} for $\eps$ sufficiently small. 

\subsubsection{$\cC_\mu$ collision term} \label{sec:LocCmu}
The $\cC_\mu$ contributions are treated by a combination of \S\ref{sec:HicCmu}  and the methods used on the collisionless term in \S\ref{sec:LoLCless}. 
Indeed, 
\begin{align*}
\nu\abs{\cC_{\mu}} & \lesssim \nu e^{-\abs{\alpha}\nu t} \norm{A'(v^\alpha f)}_2 \left(\brak{t} \norm{M_T(t)}_{\cM^{\sigma-3,\delta}} + \norm{M_1(t)}_{\cM^{\sigma-3,\delta}}\right) \\
& \lesssim \nu e^{-\abs{\alpha}\nu t} \norm{A'(v^\alpha f)}_2 \left(\brak{t}^{-2}\norm{M_T (t)}_{\cM^{\sigma,\delta}} + \norm{M_1(t)}_{\cM^{\sigma-3,\delta}}\right),
\end{align*}
which is consistent with Proposition \ref{prop:boot} by \eqref{ineq:MThi} and  $K_{Lj} \gg K_{HM0} + K_{HM1} + K_{HM2}$. 

\subsubsection{$\cC_h$ collision term} \label{sec:LocCh}
The analogues of $J_{\cC_h;j}$ for $j=1,3,4$ are treated as above in \S\ref{sec:HicCh} and are hence omitted for the sake of brevity. 
Next, we sketch the treatment of $J_{\cC_h;2}$ where, as above, the complication is  dealing with the potential loss of localization.
The treatment is a slight variant of that in \S\ref{sec:HicCh}. 
Hence, 
\begin{align*}
\nu J_{\cC_h;2} & = \nu e^{-2\abs{\alpha} \nu t - \nu t} \brak{A'(v^\alpha f), \rho(t,x+v\frac{1-e^{-\nu t}}{\nu})  v \cdot \partial_v^t A'(v^\alpha f) } \\
& \quad + n \nu e^{-2\abs{\alpha} \nu t} \brak{A'(v^\alpha f), A'(t,\grad) \left(\rho(t,x+v\frac{1-e^{-\nu t}}{\nu}) v^\alpha f \right)} \\ 
& \quad + \nu e^{-2\abs{\alpha} \nu t -\nu t} \big\langle A'(v^\alpha f), A'(t,\grad) \left(\rho(t, x+v\frac{1-e^{-\nu t}}{\nu}) v^\alpha v \cdot \partial_v^t f\right)
 \\ & \quad\quad - \rho(t,x+v\frac{1-e^{-\nu t}}{\nu})  v \cdot \partial_v^t A'(v^\alpha f) \big\rangle \\ 
& = \nu \sum_{j=1}^3 J_{\cC_h;2,j}.
\end{align*}
As in \S\ref{sec:HicCh}, the main issue is the commutator term $J_{\cC_h;2,3}$. 
As above, we write 
\begin{align*}
\nu J_{\cC_h;2,3} & = \nu e^{-2\abs{\alpha}\nu t -\nu t} \sum_{k,\ell} \int A D_\eta^\alpha \hat{f}(t,k,\eta) \hat{\rho}(t,\ell) \\
& \quad\quad \times\left(-\sum_{\beta \leq \alpha: \abs{\beta} = 1} n e^{\nu t} A D_\eta^{\alpha} \widehat{f}(t,k-\ell,\eta- \ell \frac{1-e^{-\nu t}}{\nu}) - (\grad_\eta A) D_\eta^\alpha \partial_v^t \hat{f}(t,k-\ell,\eta-\ell \frac{1-e^{-\nu t}}{\nu}) \right) \dd \eta \\ 
& \quad + \nu e^{-2\abs{\alpha}\nu t -\nu t} \sum_{k,\ell} \int A D_\eta^\alpha \hat{f}(t,k,\eta) \hat{\rho}(t,\ell) \\
& \quad\quad \times\left(A(t,k,\eta) - A(t,k-\ell,\eta-\ell \frac{1-e^{-\nu t}}{\ell})\right)D_\eta^\alpha \grad_\eta \cdot (\widehat{\partial_v^t f}) \dd \eta \\  
& = \nu J_{\cC_h;2,3,a} + J_{\cC_h;2,3,b}. 
\end{align*}
As above, $J_{\cC_h;2,3,a}$ is treated by methods used several times already, 
\begin{align*}
\nu \abs{J_{\cC_h;2,3,a}} & \lesssim \nu e^{- (\delta-\delta_1)\nu^{1/3}t} \norm{\rho(t)}_{\cM^{\beta,\delta}}\norm{A'(v^\alpha f)}_{2} \left(\norm{f}_{F^{\gamma,\delta_1}_{m+1}} + \norm{f}_{\cD^{\gamma,\delta_1}_{m+1}}\right).
\end{align*}
Turn next to $J_{\cC_h;2,3,b}$.
By the same argument as in \S\ref{sec:HicCh} involving \eqref{ineq:trans0}, \eqref{ineq:trans1}, \eqref{ineq:trans2}, and \eqref{ineq:trans3}, we deduce (note that we crucially use $\gamma -1  \geq n/2+1$ so that Sobolev embedding can be applied),
\begin{align*}
\nu \abs{J_{\cC_h;1,3,b}} & \lesssim \nu e^{(\delta_1 - \delta)\nu^{1/3}t}\norm{\partial_v^t A'(v^\alpha f)}_2 \brak{t}^{1/2}\norm{\abs{\grad_x}^{1/2}\rho}_{\cM^{\sigma-j,\delta}}\norm{f}_{F^{\gamma-1,\delta_1}_{m+2+j}}  \\
& \quad + \nu e^{(\delta_1 - \delta)\nu^{1/3}t}\norm{\partial_v^t A'(v^\alpha f)}_2 \brak{t}^{-\beta/2}\norm{\rho}_{\cM^{\beta,\delta}} \\ & \quad\quad \times \left(\norm{f}_{F^{\gamma-1,\delta_1}_{m+2+j}} + \nu^{1/3} t \left(\norm{f}_{\cD^{\gamma-1,\delta_1}_{m+2+j}} + \norm{f}_{F^{\gamma,\delta_1}_{m+1+j}}\right)\right) \\ 
& \quad + \nu e^{\frac{1}{2}(\delta_1 - \delta)\nu^{1/3}t}\norm{A'(v^\alpha f)}_2 \\ & \quad\quad \times \left(\brak{t}^{1/2} \norm{\abs{\grad_x}^{1/2}\rho}_{\cM^{\sigma-j,\delta}} \norm{f}_{F^{\gamma-1,\delta_1}_{m+1+j}} + \brak{t}^{-\beta/2}\norm{\rho}_{\cM^{\beta,\delta}}\norm{f}_{F^{\gamma,\delta_1}_{m+1+j}}\right).  
\end{align*}
If $\gamma-1 > n/2+2$, then we apply $\sigma-j \geq 3$ to add additional time-decay via $\norm{\abs{\grad_x}^{1/2}\rho}_{\cM^{\sigma-j,\delta}} \lesssim \brak{t}^{-3} \norm{\abs{\grad_x}^{1/2}\rho}_{\cM^{\sigma,\delta}}$ and hence this is consistent with Proposition \ref{prop:boot} for $\delta > \delta_1$ and $\eps$ and $\nu$ sufficiently small.
If $\gamma-1 = n/2+2$, we use that $\sigma-j = n/2+2$ and hence for $\beta$ sufficiently large  (depending on $n$),
\begin{align*}
\norm{\abs{\grad}_x^{1/2} \rho}_{\cM^{\sigma-j,\delta}} \lesssim \brak{t}^{-\beta/2}\norm{\rho}_{\cM^{\beta,\delta}} \lesssim \brak{t}^{-2(n-4)}\norm{\rho}_{\cM^{\beta,\delta}}. 
\end{align*}
Therefore, for $\beta/2 > 4 + n/2$ we may counterbalance the growth coming from \eqref{ineq:cHF}, and hence we may apply Lemma \ref{lem:cHvsFcD} together with the bootstrap control \eqref{boot:lowhimoment}.
Hence, for $\eps$ and $\nu$ sufficiently small, this contribution is also consistent with Proposition \ref{prop:boot}. 

\subsection{$\cH$ high moment estimate} \label{sec:HiMoment}
In this section we improve \eqref{boot:lowhimoment}, hence deducing the best velocity localizations. For example, this makes it possible to close the loss of velocity localization in the treatments of the nonlinear collision terms in \S\ref{sec:HicCh} and \S\ref{sec:LocCh}. 

Let $\alpha,\gamma$ be multi-indices such that $\abs{\alpha} + \abs{\gamma} \leq \textup{Ceil}(n/2+2)$.  
In the ensuing computation, denote $m'-\abs{\alpha} = q$ and compute the following 
\begin{align*}
\frac{1}{2}\frac{d}{dt}\left(\frac{1}{\brak{t}^{2\theta\abs{\gamma}}}\norm{D^\alpha_x D^\gamma_v h}_{L^2_{q}}^2\right) & = -\theta\abs{\gamma}\frac{t}{\brak{t}^{2\theta\abs{\gamma} + 2}} \norm{D^\alpha_x D^\gamma_v h}_{L^2_{q}}^2 - \frac{1}{\brak{t}^{2\theta\abs{\gamma}}} \brak{D^\alpha_x D^\gamma_v h,D^\alpha_x D^\gamma_v (v\cdot \grad_x h)}_{L^2_q} \\ & \quad - \frac{1}{\brak{t}^{2\theta\abs{\gamma}}}\brak{D^\alpha_x D^\gamma_v h,D^\alpha_x D^\gamma_v(E\cdot \grad_v h)}_{L^2_q} \\ & \quad + \nu\frac{1}{\brak{t}^{2\theta\abs{\gamma}}}\brak{D^\alpha_x D^\gamma_v h,D^\alpha_x D^\gamma_v\left((1+\rho)\left((1+T)\Delta_vh + \grad_v\cdot((v-u) h)\right)\right)}_{L^2_q} \\ & \quad + \nu\frac{1}{\brak{t}^{2\abs{\gamma}}}\brak{D^\alpha_x D^\gamma_v h,D^\alpha_x D^\gamma_v \cC_\mu}_{L^2_q}  \\ 
& = -\theta\abs{\gamma}\frac{t}{\brak{t}^{2\theta\abs{\gamma} + 2}} \norm{D^\alpha_x D^\gamma_v h}_{L^2_{q}}^2 \\ & \quad -\sum_{\gamma' \leq \alpha: \abs{\gamma'} = \abs{\alpha} - 1}\frac{1}{\brak{t}^{2\theta\abs{\gamma}}} \brak{D^\alpha_x D^\gamma_v h, (D^{\gamma-\gamma'}_v v)\cdot \grad_x D^{\gamma'}_v D^\alpha_x h)}_{L^2_q} \\& \quad - \frac{1}{\brak{t}^{2\theta\abs{\gamma}}}\brak{D^\gamma_v D^\alpha_x h,D^\alpha_x D^\gamma_v(E\cdot \grad_v h)}_{L^2_q} \\ & \quad + \nu\frac{1}{\brak{t}^{2\theta\abs{\gamma}}}\brak{D^\alpha_x D^\gamma_v h,D^\alpha_x D^\gamma_v\left((1+\rho)\left((1+T)\Delta_vh + \grad_v\cdot((v-u) h)\right)\right)}_{L^2_q} \\ & \quad + \nu\frac{1}{\brak{t}^{2\theta\abs{\gamma}}}\brak{D^\alpha_x D^\gamma_v h,D^\alpha_x D^\gamma_v \cC_\mu}_{L^2_q}. 
\end{align*}
The transport term is estimated via (note that this term vanishes if $\gamma = 0$), 
\begin{align*}
\frac{1}{\brak{t}^{2\theta\abs{\gamma}}}\abs{\sum_{\gamma' \leq \gamma: \abs{\gamma'} = \abs{\gamma} - 1} \brak{D^\alpha_x D^\gamma_v h, (D^{\gamma-\gamma'}_v v)\cdot \grad_x D^{\gamma'}_v D^\alpha_x h)}_{L^2_q}} & \\ & \hspace{-6cm} \lesssim \sum_{\gamma' \leq \gamma: \abs{\gamma'} = \abs{\gamma} - 1} \frac{1}{\brak{t}^{\theta\abs{\gamma}+\theta}} \norm{D^\alpha_x D^\gamma_v h}_{L^2_q} \frac{1}{\brak{t}^{\theta\abs{\gamma'}}}\norm{\grad_x D^{\gamma'}_v D^\alpha_x h}_{L^2_q} \\ 
& \hspace{-6cm} \lesssim \frac{1}{\brak{t}^{\theta}}\norm{h(t)}_{\cH_q}^2,
\end{align*}
which is consistent choosing $\theta >  1$. 
The electric field term is estimated via distributing the derivatives, integrating by parts on the leading derivative term, and applying Sobolev embedding 
\begin{align*}
\abs{\frac{1}{\brak{t}^{2\theta\abs{\gamma}}}\brak{D^\gamma_v D^\alpha_x h,D^\alpha_x D^\gamma_v(E\cdot \grad_v h)}_{L^2_q}} & \\
& \hspace{-6cm}  \lesssim \frac{\norm{E(t)}_{L^\infty}}{\brak{t}^{2\theta\abs{\gamma}}}\norm{D^\gamma_v D^\alpha_x h}_{L^2_{q-1/2}}^2 + \sum_{\alpha' \leq \alpha: \abs{\alpha'} \leq \abs{\alpha} -1 } \abs{\frac{1}{\brak{t}^{2\theta\abs{\gamma}}}\brak{D^\gamma_v D^\alpha_x h,D^{\alpha-\alpha'}_x E \cdot \grad_v D^{\alpha'}_x D^\gamma_v h}_{L^2_q}} \\ 
& \hspace{-6cm} \lesssim \brak{t}^{-\beta/2} \norm{\rho}_{\cM^{\beta,\delta}}\norm{h}_{\cH_q}^2.   
\end{align*}
The source terms from collisions with $\mu$ are easily handled via Sobolev embedding and the higher regularity and decay available on $\rho$ and $T$,  
\begin{align*}
\frac{\nu}{\brak{t}^{2\theta\abs{\gamma}}}\brak{D^\gamma_v D^\alpha_x h,D^\alpha_x D^\gamma_v\cC_\mu}_{L^2_q} & = \frac{\nu}{\brak{t}^{2\theta\abs{\gamma}}}\brak{D^\gamma_v D^\alpha_x h,D^\alpha_x D^\gamma_v\left((1 + \rho)\left( T\Delta_v \mu - u \cdot \grad_v\mu \right)\right)}_{L^2_q} \\ 
& \lesssim \nu \eps \brak{t}^{-\beta/2 - 2\theta\abs{\gamma}}\norm{D^\gamma_v D^\alpha_x h}_{L^2_q}, 
\end{align*}
which is consistent with Proposition \ref{prop:boot} for $\nu$ sufficiently small. 

Turn finally to the collision terms; by choosing $\eps$ sufficiently small and using the controls $\norm{\rho}_{\cM^{\beta,0}} + \norm{T}_{\cM^{\beta,0}} \lesssim \eps$, 
\begin{align*}
\nu\frac{1}{\brak{t}^{2\theta\abs{\gamma}}}\brak{D^\alpha_x D^\gamma_v h,D^\alpha_x D^\gamma_v\left((1+\rho)\left((1+T)\Delta_vh + \grad_v\cdot((v-u) h)\right)\right)}_{L^2_q} & \\
& \hspace{-9cm} = \nu\frac{1}{\brak{t}^{2\theta\abs{\gamma}}}\brak{D^\alpha_x D^\gamma_v h,D^\alpha_x D^\gamma_v\left((1+\rho)(1+T)\Delta_vh\right)}_{L^2_q} \\
& \hspace{-9cm} \quad + n\nu\frac{1}{\brak{t}^{2\theta\abs{\gamma}}}\brak{D^\alpha_x D^\gamma_v h,D^\alpha_x D^\gamma_v \left((1+\rho)h\right)}_{L^2_q} + \nu\frac{1}{\brak{t}^{2\theta\abs{\gamma}}}\brak{D^\alpha_x D^\gamma_v h,D^\alpha_x D^\gamma_v \left((1+\rho)(v-u)\cdot \grad_v h\right)}_{L^2_q} \\ 
& \hspace{-9cm} \leq -\frac{\nu}{\brak{t}^{2\theta\abs{\gamma}}}\norm{\sqrt{(1+\rho)(1+T)}\grad_v D^\alpha_x D^\gamma_v h}_{L^2_q}^2  \\
& \hspace{-9cm} \quad + \sum_{\alpha' \leq \alpha: \abs{\alpha'} \leq \abs{\alpha}-1}\frac{C\nu \eps}{\brak{t}^{2\theta\abs{\gamma}}}\norm{\grad_v D^{\alpha'}_x D^{\gamma}_v h}_{L^2_q} \norm{ \grad_v D^\alpha_x D^\gamma_v h}_{L^2_q} \\  
& \hspace{-9cm} \quad + \sum_{\alpha' \leq \alpha}\frac{C\nu}{\brak{t}^{2\theta\abs{\gamma}}}\norm{\grad_v D^{\alpha'}_x D^{\gamma}_v h}_{L^2_q} \norm{ D^\alpha_x D^\gamma_v h}_{L^2_q} \\ 
& \hspace{-9cm} \quad + C\nu \norm{h}_{\cH}^2 + \nu\sum_{\alpha'\leq \alpha: \abs{\alpha'} \leq \abs{\alpha} - 1}\frac{1}{\brak{t}^{2\theta\abs{\gamma}}}\brak{D^\alpha_x D^\gamma_v h, (D^{\alpha-\alpha'}_x\rho) v \cdot \grad_v D_x^{\alpha'} D^\gamma_v h }_{L^2_q}. 
\end{align*}
Using that higher spatial derivatives are being measured in a lower moment, for $\eps$ sufficiently small, we have for some constant $C>0$, 
\begin{align*}
\nu\frac{1}{\brak{t}^{2\theta\abs{\gamma}}}\brak{D^\alpha_x D^\gamma_v h,D^\alpha_x D^\gamma_v\left((1+\rho)\left((1+T)\Delta_vh + \grad_v\cdot((v-u) h)\right)\right)}_{L^2_q} & \leq \\
& \hspace{-9cm} -\frac{1}{2}\frac{\nu}{\brak{t}^{2\theta\abs{\gamma}}}\norm{\grad_v D^\alpha_x D^\gamma_v h}_{L^2_q}^2 + \frac{1}{10 \brak{t}^{2\theta\abs{\gamma}}} \sum_{\alpha' \leq \alpha} \norm{\grad_v D^{\alpha'} D^\gamma_v h}_{L^2_q}^2 \\
& \hspace{-9cm} + \left(C\nu + C\eps \brak{t}^{-\beta/2}\right) \norm{h(t)}_{\cH}^2.
\end{align*}
This is hence consistent with Proposition \ref{prop:boot} provided we choose $\eps$ small and suitably adjust the bootstrap constants. 
This completes the improvement of \eqref{boot:lowhimoment}. 

\section{Spatially homogeneous thermalization estimate} \label{sec:Zeromode}
In this section we obtain the decay estimate on $h_0$ in $H^{s}_m$. 
Certain aspects of the ensuing calculations are reminiscent of arguments used in \cite{BGM15I,BGM15II,BGM15III} to control the nonlinear influence of $x$-dependent modes.
Of crucial importance is the following spectral gap estimate on $L$ in $L^2_m$ obtained by Gallay and Wayne in \cite{GallayWayne02}. 
Only the integer case is treated in \cite{GallayWayne02}; the fractional exponent case follows by Hilbert space interpolation. 

\begin{theorem}[Properties of $e^{tL}$ (from \cite{GallayWayne02})] \label{lem:specgap}
Let $m > n/2$ be an integer and $s \geq 0$. 
Define 
\begin{align*}
a(t) = 1-e^{-t}. 
\end{align*}
There holds
\begin{itemize} 
\item For all $g \in H^s_m$, and multi-indices $\alpha$
\begin{align*}
\norm{D^\alpha e^{\nu tL} g}_{H^s_m(\Real^n)} \lesssim \frac{1}{a(t)^{\abs{\alpha}/2}} \norm{f}_{H^s_m(\Real^n)}. 
\end{align*}
\item Suppose further that $g = g(v)$ is mean-zero. Then, 
\begin{align*}
\norm{D^\alpha e^{\nu tL} g}_{H^s_m(\Real^n)} \lesssim \frac{e^{-\nu t}}{a(t)^{\abs{\alpha}/2}} \norm{g}_{H^s_m(\Real^n)}. 
\end{align*}
\end{itemize} 
\end{theorem}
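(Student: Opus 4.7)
The plan is to reduce to the integer-$s$ case treated in \cite{GallayWayne02} and then obtain the fractional-$s$ case by Hilbert-space interpolation. Since the fractional parameter only appears through $s$ (the derivative multi-index $\alpha$ is always integer, and $m$ is assumed integer), the integer-case input already covers a scale of norms dense in the fractional scale, and no new analytic input on the semigroup is needed.

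For integer $s$, I would invoke the spectral picture of the Ornstein--Uhlenbeck generator $Lg = \Delta_v g + \nabla_v\cdot(vg)$ in weighted Lebesgue spaces developed in \cite{GallayWayne02}: $L$ is self-adjoint with compact resolvent in a suitable weighted Hilbert space, has spectrum $\{-j\}_{j\geq 0}$ with Hermite-type eigenfunctions, and the zero eigenspace is one-dimensional, spanned by the Maxwellian. The smoothing factor $a(\cdot)^{-|\alpha|/2}$ comes directly from the Mehler kernel representation of $e^{\nu tL}$: writing the action on $g$ as an explicit Gaussian integral, each spatial derivative $D^\alpha$ falls on the Gaussian kernel and produces a factor $a(\cdot)^{-1/2}$ per derivative after integration. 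For $g$ mean-zero, orthogonality to the kernel $\mu$ combined with the spectral gap of size $1$ on the complement yields the additional $e^{-\nu t}$. Polynomial weights of integer order $m$ are handled in \cite{GallayWayne02} by direct commutation of $\langle v\rangle^m$ and $D^\alpha$ with $L$, which only produces lower-order moment error terms that can be absorbed.

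To extend to non-integer $s \geq 0$, I pick integers $s_1, s_2$ with $s_1 \leq s \leq s_2$ and write $s = (1-\theta)s_1 + \theta s_2$. The key identification is
\[
[H^{s_1}_m, H^{s_2}_m]_\theta = H^s_m,
\]
which I would obtain from the classical identity $[H^{s_1}(\Real^n), H^{s_2}(\Real^n)]_\theta = H^s(\Real^n)$ together with the multiplication isomorphism $g \mapsto \langle v\rangle^m g$ between $H^s_m$ and $H^s$. The operator $T_t := D^\alpha e^{\nu tL}$ is bounded on each $H^{s_i}_m$ with operator norm $\lesssim a(t)^{-|\alpha|/2}$ (and with the extra $e^{-\nu t}$ on the mean-zero subspace) by the integer case, so complex interpolation yields the same bound on $H^s_m$. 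For the mean-zero refinement, the functional $g \mapsto \int g\,dv$ is continuous on each $H^{s_i}_m$ when $m > n/2$ (pair with $\langle v\rangle^{-m}$ and use Cauchy--Schwarz), so $\mathring H^{s_i}_m := \{g \in H^{s_i}_m : \int g = 0\}$ is a closed hyperplane, invariant under $T_t$ by conservation of mass under $L$, and the interpolation of closed subspaces gives $\mathring H^s_m$ as the interpolation space.

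The main obstacle is purely technical: verifying the interpolation identity for the weighted spaces (routine via the $\langle v\rangle^m$ isomorphism and the classical Bessel-potential characterization) and confirming compatibility of the mean-zero subspace with interpolation (routine since $m > n/2$ makes the mean functional continuous on every endpoint space and $L$ preserves zero mass). Neither is conceptually deep, but each requires some care to make the final chain of estimates rigorous.
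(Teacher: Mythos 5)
Your proposal is correct and takes essentially the same route as the paper: the paper itself gives no proof of this lemma, stating only that the integer-$s$ case is in Gallay--Wayne and that fractional $s$ follows by Hilbert-space interpolation, which is precisely your strategy. The details you add about the Mehler kernel, the weighted-space interpolation identity, and the compatibility of the mean-zero subspace with interpolation are all reasonable ways to fill in what the paper leaves implicit.
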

We then use Duhamel's formula to write the evolution of the zero mode (note that the linear term $E \cdot \grad_v \mu$ vanishes): 
\begin{align*}
h_0(t) & = e^{\nu t L} h_{in,0} - \int_0^t e^{\nu(t-\tau)L}\left(E\cdot \grad_v h\right)_0 \dd \tau +  \int_0^t e^{\nu(t-\tau)L}\left(\nu\cC_\mu + \nu \cC_h \right)_0 \dd \tau \\ 
& = e^{\nu t L} h_{in, 0} + \sum_{j=1}^3 N_j. 
\end{align*}
First, consider the collisionless nonlinear term $N_1$. 
By Theorem \ref{lem:specgap}, (note that $E \cdot \grad_v h = \grad_v \cdot (Eh)$ as the electric field is independent of $v$), there holds 
\begin{align}
\norm{N_1}_{H^s_m} \lesssim \int_0^t \frac{e^{-\nu (t-\tau)}}{a(t-\tau)^{1/2}} \norm{\grad_v \cdot \left(E h\right)_0}_{H^{s-1}_m} \dd \tau. \label{ineq:N1bd}
\end{align}
Note, 
\begin{align*}
\widehat{\left(E\cdot \grad_v h\right)}_0(t,\eta) & = \sum_{k \in \Integer^n_\ast} \widehat{E}(t,-k) \cdot i\eta\widehat{h}(t,k,\eta) \\ 
& = \sum_{k \in \Integer^n_\ast} \widehat{E}(t,-k) \cdot i \eta \widehat{f}(t,k,e^{-\nu t} \eta + k\frac{1-e^{-\nu t}}{\nu}).
\end{align*}
Therefore, 
\begin{align}
\abs{\brak{\eta}^{s-1}\widehat{\left(E\cdot \grad_v h\right)}_0(t,\eta)} & \lesssim e^{s \nu t}\sum_{k \in \Integer^n_\ast} \left(\brak{k\frac{1-e^{-\nu t}}{\nu}}^{s-1} + \brak{e^{-\nu t}\eta + k\frac{1-e^{-\nu t}}{\nu}}^{s-1} \right) \nonumber \\ & \quad\quad \times \abs{\widehat{E}(t,-k)} \abs{\eta}\abs{\widehat{f}(t,k,e^{-\nu t} \eta + k\frac{1-e^{-\nu t}}{\nu})}. \label{ineq:N1stp1}
\end{align}
Note that $k \neq 0$, and hence we gain $e^{-\delta_1 \nu^{1/3}t}$ decay from $f$ (as well as the decay from $\rho$). 
Hence, taking derivatives in $\eta$ and applying Lemma \ref{lem:YoungDisc} then gives (using $\nu$ small to absorb the exponential growth using the rapid decay $\nu^{1/3} t$ decay) by the bootstrap hypotheses and $\beta < s < \sigma+1/2$, 
\begin{align*}
\norm{\left(E \cdot \grad_v h\right)_0}_{H^{s-1}_m} & \lesssim e^{-\delta_1\nu^{1/3}\tau}\left(\norm{\rho(\tau)}_{\cM^{s-1,\delta}} \norm{f(\tau)}_{F^{\beta,\delta_1}_m} + \brak{\tau}^{-\beta/2}\norm{\rho(\tau)}_{\cM^{\beta,\delta}} \norm{f(\tau)}_{F^{s,\delta_1}_m}\right) \\ 
& \lesssim e^{-\delta_1\nu^{1/3}\tau}\left(\norm{\rho(\tau)}_{\cM^{\sigma,\delta}}^{\theta} \norm{\rho(\tau)}_{\cM^{\beta,\delta}}^{1-\theta} \norm{f(\tau)}_{F^{\beta,\delta_1}_m} + \brak{\tau}^{-\beta/2} \norm{\rho(\tau)}_{\cM^{\beta,\delta}} \norm{f(\tau)}_{F^{s,\delta_1}_m}\right) \\ 
& \lesssim e^{-\frac{1}{2}\delta_1\nu^{1/3}\tau}\left(\eps^{2-\theta}\norm{\rho(\tau)}_{\cM^{\sigma,\delta}}^{\theta} + \brak{\tau}^{-\beta/2} \eps^2\right), 
\end{align*}
where $\theta \in (0,1)$ such that  $s-1 = \theta \sigma + (1-\theta) \beta$. 
Therefore, \eqref{ineq:N1bd} gives using $\sigma-s > 0$ 
\begin{align*}
\norm{N_1}_{H^s_m} & \lesssim  e^{-\nu t} \int_0^t \frac{e^{\nu \tau} e^{-\frac{1}{2}\delta_1 \nu^{1/3} \tau}}{a(t-\tau)^{1/2}} \left(\eps^{2-\theta}\norm{\rho(\tau)}_{\cM^{\sigma,\delta}}^{\theta} + \brak{\tau}^{-\beta/2} \eps^2\right)\dd \tau \\\ 
& \lesssim e^{-\nu t} \eps^{2-\theta}\left(\int_0^t \frac{e^{-\frac{1}{4(2-\theta')}\delta_1 \nu^{1/3}\tau}}{a(t-\tau)^{\frac{1}{2-\theta}}} \dd \tau \right)^{\frac{2-\theta}{2}} \norm{\rho}^{\theta}_{L^2_t \cM^{\sigma,\delta}} + \eps^2 e^{-\nu t} \\ 
& \lesssim e^{-\nu t} \eps^2\nu^{-(2-\theta)/6}, 
\end{align*}
which is consistent with Proposition \ref{prop:boot} by  $\eps \nu^{-1/3}$ sufficiently small (note $(2-\theta)/6 < 1/3$).  

Turn next to the collisional term $\cC_\mu$. 
Due to the $\grad_v \cdot$, it follows that $\left(\cC_\mu\right)_0$ is average zero, and hence by Theorem \ref{lem:specgap}, there holds 
\begin{align*}
\norm{N_2}_{H^s_m} & \lesssim \int_0^t \frac{e^{-\nu (t-\tau)}}{a(t-\tau)^{1/2}} \norm{\left(\cC_\mu\right)_0}_{H^{s-1}_m} \dd \tau. 
\end{align*}
First, note that (using that $(M_1)_0=0$ by conservation of momentum),
\begin{align*}
\nu \widehat{\left(\cC_{\mu}\right)_0}(t,\eta) & = \nu \widehat{M_T}(t,0) \widehat{\Delta_v \mu}(\eta).  
\end{align*}
Therefore, applying $D_\eta^\alpha \left(\brak{\eta}^{s} \cdot \right)$ and integrating (applying also the product rule), 
\begin{align*} 
\nu \norm{\left(\cC_{\mu}\right)_0}_{H^{s-1}_m} & \lesssim \nu \abs{(M_T)_0(t)}.  
\end{align*}
From this estimate, it follows from the bootstrap hypotheses (and Lemma \eqref{lem:uTctrls})  by an argument similar to that applied on $N_1$ that 
\begin{align*}
\norm{N_2}_{H^s_m} & \lesssim  \nu e^{-\nu t} \eps,
\end{align*}
which is consistent with Proposition \ref{prop:boot}. 

Turn the nonlinear collisional term $\cC_h$. 
As above, the $\grad_v \cdot$, ensures  $\left(\cC_h\right)_0$ is average zero, and hence by Theorem \ref{lem:specgap}, there holds 
\begin{align*}
\norm{N_3}_{H^s_m} & \lesssim \int_0^t \frac{e^{-\nu (t-\tau)}}{a(t-\tau)^{1/2}} \norm{\left(\cC_h\right)_0}_{H^{s-1}_m} \dd \tau. 
\end{align*}
Expanding the collision term as usual, 
\begin{align*}
\nu \widehat{\left(\cC_{h}\right)_0}(t,\eta) & = \nu \sum_{k \in \Integer^n_\ast} \hat{\rho}(t,-k) \left(-\abs{\eta}^2 \widehat{h}(t,k,\eta) + i\eta \cdot \grad_\eta \hat{h}(t,k,\eta) \right) \\ 
& \quad + \nu\sum_{k \in \Integer^n}\left(-\abs{\eta}^2 \widehat{M_T}(t,-k) \hat{h}(t,k,\eta) - \widehat{M_1}(t,-k) \cdot i\eta \hat{h}(t,k,\eta)\right) \\ 
& = \sum_{\ell=1}^2 J_\ell. 
\end{align*}
Consider first $J_1$; as in \eqref{ineq:N1stp1} there holds 
\begin{align}
\brak{\eta}^{s-1}\abs{J_1} & \lesssim  \nu e^{s\nu t} \sum_{k \in \Integer^n_\ast} \left(\brak{k\frac{1-e^{-\nu t}}{\nu}}^{s-1} + \brak{e^{-\nu t}\eta + k\frac{1-e^{-\nu t}}{\nu}}^{s-1} \right) \nonumber \\ & \quad\quad \times \abs{\hat{\rho}(t,-k)}  \left(-\abs{\eta}^2\abs{\widehat{f}(t,k,e^{-\nu t}\eta + k\frac{1-e^{-\nu t}}{\nu})} + \abs{\eta \cdot \grad_\eta \hat{f}(t,k,e^{-\nu t} \eta +k\frac{1-e^{-\nu t}}{\nu} )} \right). \label{ineq:J1bd} 
\end{align}
Using also that
\begin{align*}
\abs{\eta} & \lesssim \brak{t}e^{\nu t}\brak{k\frac{1-e^{-\nu t}}{\nu}} + e^{\nu t} \brak{e^{-\nu t}\eta + k\frac{1-e^{-\nu t}}{\nu}} \\ 
\abs{\eta} \abs{\hat{f}(t,k,e^{-\nu t}\eta + k\frac{1-e^{-\nu t}}{\nu})} & = e^{\nu t}\abs{\widehat{\partial_v^t f}(t,k,e^{-\nu t}\eta + k\frac{1-e^{-\nu t}}{\nu})},
\end{align*}
it follows from \eqref{ineq:J1bd}, Lemma \ref{lem:YoungDisc} (after differentiating with respect to $\eta$), and choosing $\nu$ small to absorb exponential growth, 
\begin{align*}
\norm{J_1}_{H^{s-1}_m} & \lesssim \nu e^{-\delta_1 \nu^{1/3}t} \left(\norm{\rho}_{\cM^{s-1,\delta}}\brak{t}^{2}\norm{f}_{F^{\beta,\delta_1}_{m+1}} + \brak{t}^{-\beta/2}\norm{\rho}_{\cM^{\beta,\delta}}\left(\norm{f}_{\cD^{s,\delta_1}_{m+1}} + \norm{f}_{F^{s,\delta_1}_{m+1}}\right)\right) \\ 
& \lesssim \nu^{1/3} e^{-\frac{1}{2}\delta_1 \nu^{1/3}t}\norm{\rho}_{\cM^{s-1,\delta}} \norm{f}_{F^{\beta,\delta_1}_{m+1}} + \nu^{1/2} e^{-\delta_1 \nu^{1/3}t} \brak{t}^{-\beta/2}\norm{\rho}_{\cM^{\beta,\delta}}\left(\nu^{1/2}\norm{f}_{\cD^{s,\delta_1}_{m+1}} + \nu^{1/2}\norm{f}_{F^{s,\delta_1}_{m+1}}\right). 
\end{align*}
Next, apply the following interpolation for $s = (1-\theta')\beta + \theta'(\sigma+1/2)$, 
\begin{align*}
\norm{f}_{\cD^{s,\delta_1}_{m+1}} & \lesssim \norm{f}_{\cD^{\beta,\delta_1}_{m+1}}^{1-\theta'}  \norm{f}^{\theta'}_{\cD^{\sigma+1/2,\delta_1}_{m+1}} \lesssim e^{(1-\theta')\nu t} \brak{t}^{1-\theta'} \norm{f}_{\cD^{\beta+1,\delta_1}_{m+1}}^{1-\theta'}  \norm{f}^{\theta'}_{\cD^{\sigma+1/2,\delta_1}_{m+1}} \lesssim e^{(1-\theta')\nu t} \brak{t}^{1-\theta'} \eps^{1-\theta'} \norm{f}^{\theta'}_{\cD^{\sigma+1/2,\delta_1}_{m+1}}. 
\end{align*}
Using that $\theta' \in (0,1)$ and that there are extra powers of $t$ available, we see that for $\beta > p+2$, there holds by 
arguments similar to the treatment of $N_1$ above,  
\begin{align*}
\int_0^t \frac{e^{-\nu (t-\tau)}}{a(t-\tau)^{1/2}} \norm{J_1}_{H^{s-1}_m} \dd \tau& \lesssim \eps \int_0^t \frac{e^{-\nu (t-\tau)}}{a(t-\tau)^{1/2}} \nu^{1/3} e^{-\frac{1}{2}\delta_1 \nu^{1/3}\tau}\norm{\rho(\tau)}_{\cM^{s-1,\delta}} \dd \tau \\ 
& \quad + \eps \int_0^t \frac{e^{-\nu (t-\tau)}}{a(t-\tau)^{1/2}} \nu e^{-\delta_1 \nu^{1/3}\tau} \brak{\tau}^{-\beta/2} \left(\norm{f(\tau)}_{\cD^{s,\delta_1}_{m+1}} + \norm{f(\tau)}_{F^{s,\delta_1}_{m+1}}\right) \dd \tau \\
& \lesssim  \eps^2 e^{-\nu t}\nu^{\frac{1}{3}-\frac{(2-\theta)}{6}} + \eps^2 e^{-\nu t} \nu^{\frac{1}{2} - \frac{2-\theta'}{6}},
\end{align*}
which is consistent with Proposition \ref{prop:boot} by choosing $\nu$ and $\eps$ sufficiently small. 
This completes the contribution from $J_1$.

Turn to $J_2$, which is expanded via
\begin{align*}
J_2 & = -\nu\widehat{M_T}(t,0)\abs{\eta}^2\hat{h}(t,0,\eta) + \nu\sum_{k \in \Integer^n_\ast}\left(-\abs{\eta}^2 \widehat{M_T}(t,-k) \hat{h}(t,k,\eta) - \widehat{M_1}(t,-k) \cdot i\eta \hat{h}(t,k,\eta)\right) \\
& = J_{20} + J_{2\neq}. 
\end{align*}
The contribution of $J_{2\neq}$ is similar to $J_1$ and is hence omitted. 
By \eqref{def:hfrelat}, for $\theta \in (0,1)$ such that  $s = \theta(\sigma+1/2) + (1-\theta) \beta$, there holds 
\begin{align*}
\norm{\grad_v h_0}_{H^s_m} \lesssim e^{(s + 1 + n)\nu t}\norm{\partial_v^t f_0}_{H^{s}_m} & \lesssim e^{(s + 1 + n)\nu t}\norm{\partial_v^t f_0}_{H^{\sigma+1/2}_m}^{\theta}\norm{\partial_v^t f_0}_{H^{\beta}_m}^{1-\theta} \\
& \lesssim e^{(s+2+m+n)\nu t}\left(\norm{f_0}_{\cD^{\sigma+1/2}_m}^{\theta} + \norm{f_0}_{F^{\sigma+1/2}_m}^{\theta}\right)\norm{f}_{F^{\beta+1}_m}^{1-\theta}. 
\end{align*}
Therefore, for $J_{20}$, we have by Lemma \ref{lem:uTctrls} and H\"older's inequality (for $\beta$ large relative to $p$), 
\begin{align*}
\int_0^t \frac{e^{-\nu (t-\tau)}}{a(t-\tau)^{1/2}} \norm{J_{20}(\tau)}_{H^{s-1}_m} \dd \tau & \lesssim  \nu e^{-\nu t}\int_0^t \frac{e^{(s+m+n+2)\nu \tau} e^{-\delta \nu^{1/3}\tau}}{a(t-\tau)^{1/2} \brak{\tau}^{\beta}} \left(\brak{\tau}^\beta e^{\delta \nu^{1/3}\tau}\abs{(M_T)_0(\tau)}\right)  \\
& \quad\quad \times \left(\norm{f_0}_{\cD^{\sigma+1/2}_m}^{\theta} + \norm{f_0}_{F^{\sigma+1/2}_m}^{\theta}\right)\norm{f}_{F^{\beta+1}_m}^{1-\theta} \dd \tau \\
& \lesssim \nu e^{-\nu t} \eps \int_0^t \frac{1}{a(t-\tau)^{1/2} \brak{\tau}^{\beta}}\left(\norm{f_0}_{\cD^{\sigma+1/2}_m}^{\theta} + \norm{f_0}_{F^{\sigma+1/2}_m}^{\theta}\right)\norm{f}_{F^{\beta+1}_m}^{1-\theta} \dd \tau \\
& \lesssim e^{-\nu t} \eps^{2-\theta} \left(\nu \int_0^t \brak{\tau}^{-p/2}\norm{f(\tau)}^2_{\cD^{\sigma+1/2}_{m}} + \brak{\tau}^{-p/2-2}\norm{f(\tau)}^2_{F^{\sigma+1/2}_{m}} \dd \tau\right)^{\theta/2}, 
\end{align*}
which is consistent with Proposition \ref{prop:boot}. 
This completes the improvement of estimate \eqref{boot:therm}.

\appendix
\section{Properties of the dissipation multiplier} \label{sec:propDissM}
The properties of the multiplier $M$ are summarized in the next lemma. Properties (a) and (b) are essentially trivial, whereas properties (c) and (d) are not. 
\begin{lemma}[Properties of $M$] \label{lem:propM}
\begin{itemize}
\item[(a)] There is a universal constant $c_m$ (in particular, uniform in $t,k,\eta,$ and $\nu$) such that $c_m < M \leq 1$.
\item[(b)] For $k \neq 0$ there holds, 
\begin{align}
\nu^{1/3} \lesssim \partial_t M(t,k,\eta) + \nu e^{2\nu t} \abs{\eta - k\frac{1-e^{-\nu t}}{\nu}}^2. \label{ineq:dotMMAbd}
\end{align}
\item[(c)] There holds (uniformly in $k,\ell,\eta,\nu$ and $t$)
\begin{align}
\abs{1 - \frac{M(t,\ell,\xi}{M(t,k,\eta)}} & \lesssim \brak{k-\ell,\eta-\xi}\left( \frac{\brak{t}^2}{\nu^{1/3} \max\left(\brak{\eta},\brak{k}\right)} \right). \label{ineq:Mtrans}
\end{align}
\item[(d)] There holds for $\alpha \in \Naturals^n$ such that $\abs{\alpha} \geq 1$, 
\begin{align}
\abs{D_\eta^\alpha M} & \lesssim e^{\abs{\alpha}\nu t} \nu^{\abs{\alpha}/3}. \label{ineq:detaM}
\end{align}
\end{itemize}
\end{lemma}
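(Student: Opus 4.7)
All four claims follow from the explicit representation obtained by integrating the defining ODE:
$$M(t,k,\eta) = \exp\!\bigl(-\phi(t,k,\eta)\bigr), \qquad \phi(t,k,\eta) = \int_0^t \frac{\nu^{1/3}\,\mathrm ds}{1 + \nu^{2/3}\,|\bar\eta(s;k,\eta)|^2},$$
where $\bar\eta(s;k,\eta) = e^{\nu s}\eta - k(e^{\nu s}-1)/\nu$ is exactly the characteristic curve already appearing throughout the paper. The workhorse observation is that $\bar\eta$ satisfies the linear ODE $\partial_s\bar\eta = \nu\bar\eta - k$, so if a critical time $s^*\in[0,t]$ (with $\bar\eta(s^*)=0$) exists, then $\partial_s\bar\eta(s^*) = -k$ and $\bar\eta$ is locally the affine map $-k(s-s^*)$. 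The integrand of $\phi$ is thus a Lorentzian in $s$ of height $\nu^{1/3}$ and width $\sim 1/(|k|\nu^{1/3})$, carrying total mass $\lesssim 1/|k|$.

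\paragraph{Parts (a) and (b).} For (a), $M\le 1$ is immediate since $\phi\ge 0$; the uniform lower bound $M\ge c_m$ reduces to showing $\phi\le C$, which follows by splitting the integral into a window $|s-s^*|\lesssim 1/(|k|\nu^{1/3})$ of Lorentzian concentration plus a tail where $\nu^{2/3}|\bar\eta(s)|^2\gtrsim 1$ forces quadratic-in-$(s-s^*)$ decay. For (b), split on the size of $X:=\nu^{2/3}e^{2\nu t}|\bar\eta(t;k,\eta)|^2$: if $X\le 1$ the ODE gives $-\partial_t M\gtrsim \nu^{1/3}M\gtrsim c_m\nu^{1/3}$ by (a); if $X\ge 1$ then $\nu e^{2\nu t}|\bar\eta(t)|^2=\nu^{1/3}X\ge \nu^{1/3}$. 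Either way the sum is $\gtrsim \nu^{1/3}$.

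\paragraph{Part (c).} Using $M\ge c_m$ and $|1-e^{-x}|\le |x|$ for bounded $x$, it suffices to bound the gradient of $\phi$ in $(k,\eta)$ along the segment joining $(k,\eta)$ to $(\ell,\xi)$. With $\partial_\eta\bar\eta(s)=e^{\nu s}$ and the change of variable $u=\bar\eta(s)$ (under which $\mathrm ds = \mathrm du/(\nu u-k)$ and $e^{\nu s}=(u-k/\nu)/(\eta-k/\nu)$) the integral collapses to an explicit antiderivative,
$$\partial_\eta\phi \;=\; \frac{1}{\nu^{2/3}(\eta-k/\nu)}\!\left[\frac{1}{1+\nu^{2/3}\bar\eta(t)^2} \;-\; \frac{1}{1+\nu^{2/3}\eta^2}\right],$$
with an analogous (slightly more singular) formula for $\partial_k\phi$. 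A case split on whether $(k,\eta)$ lies near the degenerate locus $\eta=k/\nu$ (equivalently, whether $s^*\in[0,t]$) and on the relative sizes of $\langle\eta\rangle, \langle k\rangle, \nu^{-1/3}$ extracts simultaneously the $\nu^{-1/3}$ denominator and the $1/\max(\langle\eta\rangle,\langle k\rangle)$ factor. The $\langle t\rangle^2$ arises from the worst-case growth $|\bar\eta(t)|\lesssim \nu^{-1/3}\langle t\rangle$ inside the remainder. Finally, in the regime where $\langle k-\ell,\eta-\xi\rangle$ exceeds $\max(\langle\eta\rangle,\langle k\rangle)/\langle t\rangle^2$, fall back on the trivial estimate $|1-M(\ell,\xi)/M(k,\eta)|\le 2/c_m$, which is absorbed by the cubic factor $\langle k-\ell,\eta-\xi\rangle^3$.

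\paragraph{Part (d) and main obstacle.} For (d), differentiate the antiderivative of (c) in $\eta$ an additional $|\alpha|-1$ times; each derivative either hits $\bar\eta(t)$ (producing a factor at most $e^{\nu t}$) or the Lorentzian structure (producing a factor of size $\nu^{1/3}$ by the same coercivity used in (b)), yielding the claimed $e^{(|\alpha|-1)\nu t}\nu^{|\alpha|/3}$. The genuinely delicate step is (c): one must extract three independent gains---the $\nu^{1/3}$ collisional regularization, the anisotropic commutator gain $1/\max(\langle\eta\rangle,\langle k\rangle)$, and the phase-mixing growth $\langle t\rangle^2$---from a single gradient estimate, with the case analysis around the critical time $s^*$ and the degenerate locus $\eta=k/\nu$ constituting the main bookkeeping challenge.
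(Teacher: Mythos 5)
The decomposition of the argument into the Lorentzian picture for (a), the two-regime split for (b), and the acknowledgement that (c) is the hard part all match the paper's (terse) treatment. The genuine divergence — and the genuine problem — is in (c) and, downstream of it, (d).

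Your strategy for (c) is to produce a closed-form antiderivative for $\partial_\eta\phi$ and then run the mean value theorem along the segment joining $(k,\eta)$ to $(\ell,\xi)$. That is a different route from the paper's, which never differentiates: the paper writes $\abs{1 - M(t,\ell,\xi)/M(t,k,\eta)} \lesssim \abs{\phi(t,\ell,\xi)-\phi(t,k,\eta)}$ (using (a) to linearize the exponential), then estimates the difference of the two \emph{integrands} pointwise in $\tau$ via $\abs{a^2-b^2}\le\abs{a-b}(\abs a+\abs b)$, and closes with changes of variable of the form $s = \abs{\ell}\nu^{1/3}(e^{\nu\tau}-1)/\nu$; the $k=0$ or $\ell=0$ cases are handled separately. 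Your closed form, however, is specific to $n=1$: the identity $\partial_s\bigl(1+\nu^{2/3}\bar\eta^2\bigr)^{-1} \propto \bar\eta\, e^{\nu s}$ used to integrate $\partial_\eta\phi$ exactly relies on $\bar\eta_j e^{\nu s}$ being proportional to $\partial_s\abs{\bar\eta}^2$, and for $n\ge 2$ that fails — $\partial_s\abs{\bar\eta}^2 = 2\nu e^{\nu s}\bar\eta\cdot(\eta-k/\nu)$, which is not componentwise proportional to $\bar\eta_j e^{\nu s}$. Since the lemma is stated on $\TT^n_x\times\RR^n_v$ for general $n$, this is a genuine gap, not a cosmetic one. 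You could salvage the idea by decomposing $\bar\eta$ into components parallel and perpendicular to $\eta-k/\nu$ and integrating the parallel piece exactly, but then the perpendicular piece forces you back to precisely the sort of pointwise-in-$\tau$ estimate and change of variables that the paper uses — at which point the antiderivative buys you nothing. Part (d) inherits the same issue because it is built on differentiating (c)'s formula.

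One further delicate point in your (c) worth flagging explicitly: the target bound contains $\max(\brak\eta,\brak k)$ evaluated at one endpoint, whereas the mean value theorem gives you $\sup_{t\in[0,1]}\abs{\nabla\phi(k_t,\eta_t)}$ at \emph{intermediate} points, where $\max(\brak{\eta_t},\brak{k_t})$ can be much smaller than $\max(\brak\eta,\brak k)$. Your ``fall back on the trivial estimate when $\brak{k-\ell,\eta-\xi}$ is large'' covers this, but it needs to be threaded carefully into every case of the split; the paper's integrand-level comparison dodges the problem entirely because both $(k,\eta)$ and $(\ell,\xi)$ appear symmetrically from the start. Finally, a minor remark on (b): your reading that the left side of \eqref{ineq:dotMMAbd} should be controlled by $-\partial_t M$ (or $-\partial_t M/M$) plus the dissipative term is the correct interpretation — as written, the inequality has a sign typo since $\partial_t M<0$ — and your two-regime argument is exactly what the paper intends by ``follows essentially by definition.''
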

\begin{remark}
That \eqref{ineq:Mtrans} holds even when $\ell$ or $k$ is zero is crucial to the proof. 
\end{remark}  
\begin{proof}
\textbf{Proof of property (a):} \\ First notice that $M \equiv 1$ if $\nu\eta = k$. If this is not the case, then we make the change of variables $s = \abs{\eta - \frac{k}{\nu}} e^{\nu \tau}$, $ds = \abs{\nu \eta  -k} e^{\nu \tau} d\tau$,  
\begin{align*}
-\log M(t,k,\eta) & = \int_0^t \frac{\nu^{1/3}}{1 + \nu^{2/3} e^{2\nu \tau}\abs{\eta - k \frac{1-e^{-\nu \tau}}{\nu}}^2} \frac{e^{2\nu \tau} \abs{\nu \eta - k}^2}{\brak{e^{\nu \tau} (\nu \eta - k)}^2}  d\tau \\
& = \int_{\abs{\eta - \frac{k}{\nu}}}^{\abs{\eta - \frac{k}{\nu}}  e^{\nu t}} \frac{\nu^{1/3}}{1 + \nu^{2/3} \abs{s \frac{\eta - k \nu^{-1}}{\abs{\eta - k \nu^{-1}}} + \frac{k}{\nu}}^2} \frac{ \nu s  }{\brak{\nu s}^2}  ds  \\
& \lesssim 1. 
\end{align*}
This implies property (a).

\textbf{Proof of property (b):} \\
Turn next to property (b).  
We seperate into two cases.

\noindent
\textbf{The case $\abs{\nu \eta - k} e^{\nu \tau} > \frac{1}{2}$:} This case follows by separately considering  $\nu e^{2\nu \tau}\abs{\eta - k \frac{1-e^{-\nu \tau}}{\nu}}^2 < \nu^{1/3}$ and  $\nu e^{2\nu \tau}\abs{\eta - k \frac{1-e^{-\nu \tau}}{\nu}}^2 \geq \nu^{1/3}$. 

\noindent
\textbf{The case $\abs{\nu \eta - k} e^{\nu \tau} \leq \frac{1}{2}$:} In this case, note that
\begin{align}
\nu e^{2\nu \tau}\abs{\eta - k\frac{1- e^{-\nu \tau}}{\nu} }^2 = \frac{1}{\nu}\abs{e^{\nu \tau}(\nu \eta - k) + k}^2 \gtrsim \frac{\abs{k}^2}{\nu},  
\end{align}
and hence the dissipation term dominates. 

\textbf{Proof of property (c):} \\ 
Finally, turn next to \eqref{ineq:Mtrans}, the most difficult cases.

\noindent
\textbf{The case $k = 0$ or $\ell = 0$:} By definition of $M$, \eqref{ineq:Mtrans} is trivial if $k = \ell = 0$.
The two remaining cases are essentially equivalent, hence without loss of generality, assume that $k = 0$ and $\ell \neq 0$. 
By part (a) and $\abs{e^x - 1}\leq x e^x$,
\begin{align*}
\abs{1 - M(t,\ell,\xi)} \lesssim \int_0^t \frac{e^{2\nu \tau} \abs{\nu \xi - \ell }^2}{\brak{e^{\nu \tau} (\nu \xi - \ell)}^2} \frac{\nu^{1/3}}{1 + \nu^{2/3} e^{2\nu \tau} \abs{ \xi - \ell \frac{1- e^{-\nu \tau}}{\nu}}^2}  d\tau.  
\end{align*}
First, if $\abs{\xi} > 2\abs{\ell t}$, then by $(1-e^{-\nu \tau})/\nu \leq \tau \leq t$ then,
\begin{align*}
\abs{1 - M(t,\ell,\xi)} & \lesssim \int_0^t \frac{\nu^{1/3}}{1 + \nu^{2/3} e^{2\nu \tau} \abs{\xi}^2} d\tau  \lesssim \int_0^t \frac{1}{\nu^{1/3}\brak{\xi}^2} d\tau \lesssim \frac{\brak{t}^2}{\nu^{1/3}\brak{\xi}\brak{\ell}}, 
\end{align*}
which suffices in this case.
In the case $\abs{\xi} \leq 2\abs{\ell t}$, we make the change of variables $s = \nu^{1/3} \abs{\xi - \frac{\ell}{\nu}} e^{\nu \tau}$
yielding (as above, in the case $\xi = \frac{\ell}{\nu}$ the estimate trivializes)
\begin{align*}
  \abs{1 - M(t,\ell,\xi)} & \lesssim \int_{\nu^{-2/3} \abs{\nu \xi - \ell}}^{\nu^{-2/3} \abs{\nu \xi - \ell} e^{\nu t}} \frac{\nu^{2/3}s}{\brak{\nu^{2/3}s}^2} \left(\frac{1}{1 + \abs{s\frac{\nu \xi - \ell}{\abs{\nu \xi - \ell}}  + \frac{\ell}{\nu^{2/3}}}^2}\right) ds.   
\end{align*}
Subdivide the integral into the region where $\frac{\abs{\ell}}{2\nu^{2/3}} < s < \frac{2\abs{\ell}}{\nu^{2/3}}$ and its complement. Each integral can then be bounded by at least $\abs{\ell}^{-1}$, hence,
\begin{align*}
\abs{1 - M(t,\ell,\xi)} & \lesssim \frac{1}{\brak{\ell}} \lesssim \frac{\brak{t}}{\max(\brak{\ell},\brak{\xi})}, 
\end{align*}
where we also used the assumption $\abs{\xi} \lesssim \abs{\ell t}$. 
This completes the proof in the case $k = 0$ and $\ell \neq 0$.

\noindent
\textbf{The case $k\neq 0$ and $\ell \neq 0$:} In this case, we observe that again using part (a) and $\abs{e^x - 1} \leq x e^x$, we have 
\begin{align*}
\abs{1 - \frac{M(t,\ell,\xi)}{M(t,k,\eta)}} & \lesssim \int_0^t \frac{e^{2\nu \tau} \abs{\nu \xi - \ell }^2}{\brak{e^{\nu \tau} (\nu \xi - \ell)}^2} \frac{\nu^{1/3}}{1 + \nu^{2/3} e^{2\nu \tau} \abs{ \xi - \ell \frac{1- e^{-\nu \tau}}{\nu}}^2}  d\tau \\
& \quad + \int_0^t \frac{e^{2\nu \tau} \abs{\nu \eta - k }^2}{\brak{e^{\nu \tau} (\nu \eta - k)}^2} \frac{\nu^{1/3}}{1 + \nu^{2/3} e^{2\nu \tau} \abs{ \eta - k \frac{1- e^{-\nu \tau}}{\nu}}^2}  d\tau.    
\end{align*}
By the proof above and the triangle inequality, we have 
\begin{align*}
\abs{1 - \frac{M(t,\ell,\xi)}{M(t,k,\eta)}} & \lesssim \frac{\brak{t}^2}{\nu^{1/3}\brak{\ell,\xi}} + \frac{\brak{t}^2}{\nu^{1/3}\brak{k,\eta}} \lesssim \brak{k-\ell,\eta-\xi}\frac{\brak{t}^2}{\nu^{1/3} \max(\abs{k},\abs{\eta})}. 
\end{align*}
This completes the proof of \eqref{ineq:Mtrans}. 

Next, turn to the derivative estimates in part (d).  
Consider next the differentiation of $M$ with respect to $\eta$.
Compute a single derivative first: 
\begin{align*}
\abs{\grad_\eta M(t,k,\eta)} & \lesssim \int_0^t\frac{\nu  e^{2\nu \tau}\abs{\eta - k \frac{1-e^{-\nu \tau}}{\nu}} }{\left(1 + \nu^{2/3} e^{2\nu \tau}\abs{\eta - k \frac{1-e^{-\nu \tau}}{\nu}}^2\right)^2} \frac{e^{2 \nu \tau}\abs{\nu \eta - k}^2}{\brak{e^{\nu \tau}(\nu \eta - k) }^2}  d \tau \\
& \quad + \int_0^t \frac{\nu^{1/3}}{1 + \nu^{2/3} e^{2\nu \tau}\abs{\eta - k \frac{1-e^{-\nu \tau}}{\nu}}^2} \frac{\nu e^{2\nu \tau} \abs{\nu \eta - k}}{\brak{e^{\nu \tau} (\nu \eta - k)}^2}  d\tau 
\end{align*}
As above the integrals vanish when $\nu\eta = k$, which implies the gradient vanishes at this point.
Making the change of variables $s = \nu^{1/3} e^{\nu \tau} \abs{\eta - \frac{k}{\nu}}$, we have
\begin{align*}
  \abs{\grad_\eta M(t,k,\eta)} & \lesssim e^{\nu t}\int_0^\infty\frac{\nu^{1/3} \abs{s \frac{\nu \eta - k}{\abs{\nu \eta -k}} + \frac{k}{\nu^{2/3}}} }{\left(1 + \abs{s \frac{\nu \eta - k}{\abs{\nu \eta -k}} + \frac{k}{\nu^{2/3}}}^2\right)^2} \frac{\nu^{2/3}s}{\brak{\nu^{2/3}s}^2}  d s \\
& \quad + e^{\nu t}\int_0^\infty \frac{\nu^{1/3}}{1 + \abs{s \frac{\nu \eta - k}{\abs{\nu \eta -k}} + \frac{k}{\nu^{2/3}}}^2} \frac{\nu^{2/3} }{\brak{\nu^{2/3}s}^2}  d s \\
& \lesssim \nu^{1/3} e^{\nu t}. 
\end{align*}
Iterating the above argument gives for all $\abs{\alpha} \geq 1$ (note that the multiplier was chosen to be infinitely differentiable, 
\begin{align*}
\abs{D_\eta^\alpha M} & \lesssim_{\abs{\alpha}} e^{\abs{\alpha}\nu t} \nu^{\abs{\alpha}/3}. 
\end{align*} 
\end{proof}

\section*{Acknowledgments}
The author would like to thank Siming He for finding an error in the original version of this manuscript. 
The author would like to thank Amitava Bhattacharjee, Michele Coti Zelati, Greg Hammett, Andrew Majda, and Toan Nguyen for helpful discussions on the problem and Amitava Bhattacharjee for suggesting that I pursue this work.  The author was partially supported by NSF CAREER grant DMS-1552826, NSF DMS-1413177, and a Sloan research fellowship. Additionally, the research was supported in part by NSF RNMS \#1107444 (Ki-Net).

\bibliographystyle{abbrv} \bibliography{eulereqns}

\end{document}